\newcommand{\R}{\mathbb R}
\newcommand{\beq}{\begin{equation}}
\newcommand{\eeq}{\end{equation}}
\newcommand{\beqs}{\begin{eqnarray}}
\newcommand{\eeqs}{\end{eqnarray}}
\newcommand{\beql}{\begin{equation} \label}
\newcommand{\half}{\frac{1}{2}}
\newcommand{\calB}{{\cal B}}
\newcommand{\calF}{{\cal F}}
\newtheorem{proposition}{Proposition}[section]
\newtheorem{remark}{Remark}[section]
\newcommand{\p}{\partial}
\theoremstyle{definition}
\newtheorem{definition}{Definition}[section]
\newcommand{\dee}{\mathcal{D}}
\newcommand{\scl}{\mathcal{L}}
\date{}
\begin{document}
\title{A Hidden Convexity of Nonlinear Elasticity}

\author{Siddharth Singh\thanks{Department of Civil \& Environmental Engineering, Carnegie Mellon University, Pittsburgh, PA 15213, email: ssingh3@andrew.cmu.edu.} $\qquad$  Janusz Ginster\thanks{Institut f\"ur Mathematik, Humboldt-Universit\"at zu Berlin, Unter den Linden 6, 10117 Berlin, Germany, email: ginsterj@hu-berlin.de.} $\qquad$ Amit Acharya\thanks{Department of Civil \& Environmental Engineering, and Center for Nonlinear Analysis, Carnegie Mellon University, Pittsburgh, PA 15213, email: acharyaamit@cmu.edu.}}

\maketitle
\begin{abstract}
\noindent A technique for developing convex dual variational principles for the governing PDE of nonlinear elastostatics and elastodynamics is presented. This allows the definition of notions of a variational dual solution and a dual solution corresponding to the PDEs of nonlinear elasticity, even when the latter arise as formal Euler-Lagrange equations corresponding to non-quasiconvex elastic energy functionals whose energy minimizers do not exist. This is demonstrated rigorously in the case of elastostatics for the Saint-Venant Kirchhoff material (in all dimensions), where the existence of variational dual solutions is also proven. The existence of a variational dual solution for the incompressible neo-Hookean material in 2-d is also shown. Stressed and unstressed elastostatic and elastodynamic solutions in 1 space dimension corresponding to a non-convex, double-well energy are computed using the dual methodology. In particular, we show the stability of a dual elastodynamic equilibrium solution for which there are regions of non-vanishing length with negative elastic stiffness, i.e.~non-hyperbolic regions,  for which the corresponding primal problem is ill-posed and demonstrates an explosive `Hadamard instability;' this appears to have implications for the modeling of physically observed softening behavior in macroscopic mechanical response.

\end{abstract}

\section{Introduction}
The goal of this paper is to demonstrate a technique for obtaining solutions of the governing PDE of nonlinear elasticity. The elasticity may be of Cauchy type, with stress not necessarily arising as a derivative of an energy density function of the deformation gradient. The methodology is of a `dual' variational nature, transforming the given `primal' PDE problem written in first-order form, in terms of the physical deformation, deformation gradient, and velocity fields, to a dual variational and PDE problem in terms of dual Lagrange multiplier fields. The fundamental insight is to treat the primal PDE under consideration as constraints and invoke a more-or-less arbitrarily designable strictly convex, auxiliary potential to be optimized. Then, a dual variational principle for the Lagrange multiplier (dual) fields can be designed involving a dual-to-primal (DtP) mapping (i.e.~an adapted change of variables),  with the special property that its Euler-Lagrange equations are exactly the primal PDE system, interpreted as equations for the dual fields using the DtP mapping, and this, even though the primal system may not have the required symmetries necessary to be the Euler-Lagrange equations of any objective functional of the primal fields alone. As we show, the dual variational principle is convex, with its formal Euler-Lagrange system being locally degenerate elliptic, regardless of the monotonicity properties of the primal PDE system. Thus, to the extent that coercivity of the dual functional can be established, existence of minimizers of the dual functional, our variational dual solutions that are formally consistent with weak solutions of the primal PDE system, can be established. Since these ideas do not even depend on the existence of an energy density function for the primal problem, the issue of existence of minimizers of a primal energy functional is not relevant to our technique; instead it works with the structure of the stress response function and the equations of equilibrium for the primal model of elasticity invoked. Even in the case of hyperelasticity, if the
primal energy functional is not lower-semicontinuous then classical variational techniques cannot be used to prove existence of minimizers. In this case, the classical approach is to consider the relaxation of the energy functional. However, minimizers of the relaxed energy functional typically do not satisfy the original PDE system.

The question of dual variational principles for nonlinear elasticity - in the form of the possibility of an extremum principle of a complementary energy - has been explored in \cite{zubov1970stationary, koiter1973principle, de1972new, berdichevskii1979dual, knops2003uniqueness}; the question of invertibility of the first Piola-Kirchhoff stress as a function of the deformation gradient (or the question of expressing the right Cauchy-Green tensor in terms of the first Piola Kirchhoff stress) for physically realistic elastic response functions has been a fundamental obstacle in these earlier studies. Our concerns are different from these earlier works and not focused on a complementary energy principle - as one difference, we require our dual solution to recover as its Euler-Lagrange equations the primal system consisting of compatibility and equilibrium (in the case of statics), not by imposing the latter as a feature of the class of dual fields under consideration. Also, there is no analog in the earlier works of a `free' choice of an auxiliary potential that we crucially exploit. Our work builds on ideas from \cite[Sec.~6.1]{acharya2022action}-\cite{action_3, dual_cont_mech_plas}, and connects to the ideas of `hidden convexity in nonlinear PDE' advanced by Y.~Brenier \cite{brenier_book, brenier2018initial}.

An outline of the paper is as follows: in Secs.~\ref{sec:elastostat}-\ref{sec:deg_ell} we lay out the formal calculations motivating and explaining the details of our approach. In Sec.~\ref{sec:var_anal} these ideas are made rigorous in the context of nonlinear elastostatics. Sec.~\ref{sec:compute} contains computational results demonstrating various features of the developed methodology. Sec.~\ref{sec:concl} contains some concluding remarks.

\section{Dual nonlinear Elastostatics}\label{sec:elastostat}
Consider the problem of nonlinear elastostatics with boundary conditions of place and dead loads (for simplicity):
\begin{subequations}
    \label{eq:elastostat}
    \begin{align}
        \p_j P_{ij}(F) + b_i & = 0 \ \mbox{in} \ \Omega  \label{eq:equil_1}\\
        \p_j y_i - F_{ij} & = 0  \ \mbox{in} \ \Omega  \label{eq:x-F}\\
        P_{ij}n_j = p_i  \ \mbox{on} \ \p\Omega_p \ &; \qquad  y_i = y^{(b)}_i  \ \mbox{on} \ \p \Omega_y, \quad \p \Omega = \p \Omega_p \cup \Omega_y \nonumber.
    \end{align}
\end{subequations}
In the above, $P$ is a given tensor valued function of invertible tensors $F$ (delivering the First Piola-Kirchhoff stress tensor for a prescribed deformation gradient), and all derivatives are w.r.t. rectangular Cartesian coordinates on a fixed reference configuration $\Omega$ (\textit{$P$ is not assumed to necessarily be a gradient of a scalar valued function on the space of invertible tensors}). The only restrictions on $P$ we require are that it be sufficiently smooth in its argument
and that it be of the form 
\[
P(F) = F \,S\!\left(F^TF \right)
\]
for $S$ being any arbitrary symmetric tensor valued function of a symmetric tensor; this allows frame-indifference to be satisfied. Furthermore, $b$ is the specified body force density per unit volume of the reference configuration field and \textit{it is not required that the latter arise from a potential}.

Define the pre-dual functional by forming the scalar products of \eqref{eq:equil_1} and \eqref{eq:x-F} with the dual fields $\lambda$ and $\mu$, respectively, integrating by parts, and imposing the prescribed b.c.s:
\begin{equation}
    \label{eq:predual}
    \begin{aligned}
        \widehat{S}_H[y,F,\lambda, \mu] & = \int_\Omega - P_{ij}\big|_F \p_j \lambda_i + \lambda_i b_i \,dx  + \int_\Omega (- y_i \p_j \mu_{ij} - F_{ij} \mu_{ij})\, dx + \int_\Omega  H(y, F;x) \, dx \\
    & \quad + \int_{\p \Omega_p} p_i \lambda_i \, da  \ + \int_{\p \Omega_y} y^{(b)}_i \mu_{ij} n_j \, da.
    \end{aligned}    
\end{equation}
Define
\begin{equation}
    \label{eq:defs}
    \begin{aligned}
        U := (y, F); \qquad D := (\lambda, \mu); \qquad \dee := (\nabla \lambda, \lambda, \nabla \mu, \mu);\\
        \scl_H(U,\dee,x) := - P_{ij}\big|_F \p_j \lambda_i + \lambda_i b_i - y_i \p_j \mu_{ij} - F_{ij} \mu_{ij} + H(y,F;x),
    \end{aligned}
\end{equation}
and require the choice of the potential $H$ to be such that it facilitates the existence of a function
\[
U = U_H(\dee, x)
\]
which satisfies
\begin{equation}\label{eq:dLdU}
\frac{\p \scl_H}{\p U} \left(U_H(\dee, x),\dee,x \right) = 0 \qquad \forall \ (\dee,x).
\end{equation}
When such a `change of variables' mapping, $U_H = \left( y^{(H)}, F^{(H)}\right)$, exists, defining the \textit{dual} functional as
\begin{equation*}
        \begin{aligned}
        & S_H[D] := \widehat{S}_H \left[ y^{(H)}, F^{(H)}, \lambda, \mu \right] = \int_\Omega \scl_H\left(U_H(\dee,x), \dee, x \right) \, dx + \int_{\p \Omega_p} p_i \lambda_i \, da  \ + \int_{\p \Omega_y} y^{(b)}_i \mu_{ij} n_j \, da\\
        & \mbox{with $\lambda$ specified on $\p \Omega \backslash \p \Omega_p$ and $\mu$ specified on $\p \Omega \backslash \p \Omega_y$},
    \end{aligned}    
\end{equation*}
and noting \eqref{eq:dLdU}, the first variation of $S_H$ (about a state $D$ in the direction $\delta D$, the latter constrained by $\delta \lambda = 0$ on $\p \Omega \backslash \p \Omega_p$ and $\delta \mu = 0$ on $\p \Omega \backslash \p \Omega_y$), is given by
\[
\delta S_H = \bigintsss_\Omega \frac{ \p \scl_H}{\p \dee} \left(U_H(\dee,x), \dee, x \right) \cdot \delta \dee \, dx + \int_{\p \Omega_p} p_i \delta \lambda_i \, da  \ + \int_{\p \Omega_y} y^{(b)}_i \delta \mu_{ij} n_j \, da.
\]

Noting, now, that $\scl_H$ is necessarily affine in $\dee$, its second argument, it can be checked that \textit{the Euler-Lagrange equations and natural boundary conditions of the dual functional $S_H$ are exactly the system \eqref{eq:elastostat} with $U$ substituted by $U_H(\dee,x)$}.

It is this simple idea that we exploit to develop variational principles for (Cauchy) Elasticity.

\subsection{Quadratic nonlinearity about arbitrarily fixed base states}
In this section we implement the generalities above in a context that allows to demonstrate an explicitly written out dual functional. This is only for conceptual simplicity in conveying ideas; in fact, the examples we successfully compute in Sec.~\ref{sec:compute} will be more general than the assumptions made here.

For solutions of \eqref{eq:elastostat} `close' to an arbitrarily chosen (and fixed thereafter) pair of fields we refer to as a \textit{base state} given by
\[
x \mapsto \bar{y}(x); \qquad x \mapsto \nabla \bar{y} (x) =: \bar{F}(x),
\]
we assume that the nominal stress fields, $x \mapsto P(x)$, for all candidate pairs $(y,F)$ that belong to the domain of the functional $\widehat{S}$ are well-represented by a quadratic expansion around $(\bar{y}, \bar{F})$: i.e.
\begin{equation}\label{eq:quad_strs}
\begin{aligned}
     P_{ij}(F(x)) & = \bar{P}_{ij}\left(\bar{F}(x) \right) + A_{ijkl}|_{\bar{F}(x)} \left( F_{kl}(x) - \bar{F}_{kl}(x)\right) \\
     & \quad + \frac{1}{2} B_{ijklmn}|_{\bar{F}(x)} \left( F_{kl}(x) - \bar{F}_{kl}(x)\right)\left( F_{mn}(x) - \bar{F}_{mn}(x)\right),
\end{aligned}
\end{equation}
where $\bar{P}, A, B$ are given functions from the space of second order tensors to the space of second, fourth, and sixth order tensors, respectively. We assume the symmetries $B_{ijklmn} = B_{ijmnkl}$, without loss of generality. The coefficients $\bar{P}, A, B$ may be interpreted as those of a truncated Taylor expansion of the nominal stress response function about $\bar{F}$, but not necessarily. In the following, we will assume these functions to be bounded.


We note that the assumption on the form of the stress allows for nominal stress fields that access `monotone increasing' and `monotone decreasing' parts of the `stress-strain' response of the elastic material at different spatial locations of the domain.

We now choose a `shifted quadratic' form for the potential $H$:
\[
Q(y,F;x) = \frac{1}{2} \left( c_y \left|y - \bar{y}(x) \right|^2 + c_F \left|F - \bar{F}(x)\right|^2  \right),
\]
where 
\[
c_y, c_F \gg 1
\]  
are real-valued scalars. Then, the implementation of \eqref{eq:dLdU} in this context means algebraically solving for $\left(y^{(Q)}, F^{(Q)}\right)$ in terms of $(x, \nabla \lambda, \nabla \mu, \mu)$ from the \textit{dual-to-primal} (DtP) mapping equations
\begin{equation*}
    \begin{aligned}
        \frac{\p \scl_Q}{\p y_i} :& \qquad - \p_j \mu_{ij} + c_y \left(y_i^{(Q)} - \bar{y}_i(x) \right) = 0 \\
     \frac{\p \scl_Q}{\p F_{rs}} :& \qquad - \p_j \lambda_i \left( A_{ijrs}|_{\bar{F} (x)} + B_{ijklrs}|_{\bar{F}(x)} \left( F_{kl}^{(Q)} - \bar{F}_{kl}(x) \right) \right)  - \mu_{rs} + c_F \left(F_{rs}^{(Q)} - \bar{F}_{rs}(x) \right) = 0.
    \end{aligned}
\end{equation*}
which can be rearranged as the following relations (suppressing the explicit $x$ dependence of the base state for brevity)
\begin{subequations}
    \label{eq:quad_dldu}
    \begin{align}
        c_y\left( y^{(Q)}_i - \bar{y}_i\right) & = \p_j \mu_{ij}  \label{eq:map_y} \\
        c_F \, \mathbb{K}_{rskl}|_{\bar{F}} \left( F_{kl}^{(Q)} - \bar{F}_{kl} \right) & = \mu_{rs} + \p_j \lambda_i A_{ijrs}|_{\bar{F}} \label{eq:map_F}\\
        \mbox{where} \qquad \mathbb{K}_{rskl}\big|_{(\bar{F}, \nabla \lambda)} &:= \delta_{kr} \delta_{ls} - \frac{1}{c_F} \p_j \lambda_i B_{ijklrs}|_{\bar{F}}.  \quad \mbox{is affine in} \ \nabla \lambda.   \label{eq:K}
    \end{align}
\end{subequations}

Then
\begin{equation*}
    \begin{aligned}
    & \widehat{S}_Q \left[y^{(Q)}, \lambda, F^{(Q)}, \mu \right]  =  \\
    &  \quad \int_\Omega -\p_j \lambda_i \biggl(  \bar{P}_{ij}\left(\bar{F} \right) + A_{ijkl}|_{\bar{F}} \left( F_{kl}^{(Q)} - \bar{F}_{kl}\right) 
    + \frac{1}{2} B_{ijklmn}|_{\bar{F}} \left( F_{kl}^{(Q)} - \bar{F}_{kl}\right)\left( F_{mn}^{(Q)} - \bar{F}_{mn}\right)
    \biggr) \, dx \\
    & + \int_\Omega \left( - \mu_{ij} \left( F_{ij}^{(Q)} - \bar{F}_{ij}\right) - \mu_{ij} \bar{F}_{ij} \right) \, dx \quad + \int_\Omega \left( - \left( y_i^{(Q)} - \bar{y}_i \right) \p_j \mu_{ij} - \bar{y}_i \p_j \mu_{ij} \right) \, dx \\
    & + \int_\Omega \frac{1}{2} \left( c_y \left|y^{(Q)} - \bar{y} \right|^2 + c_F \left|F^{(Q)} - \bar{F}\right|^2  \right) \, dx  \quad + \int_{\p \Omega_p} p_i \lambda_i\, da \quad + \int_{\p \Omega_y} y^{(b)}_i \mu_{ij} n_j \, da + \int_\Omega \lambda_i b_i \, dx.
    \end{aligned}
\end{equation*}

\textit{For notational simplicity, in the following computations, we will omit mention of the superscript $^{(Q)}$}.

We obtain the dual functional through the following computation utilizing \eqref{eq:quad_dldu}:
\begin{equation}
    \begin{aligned}
         S_Q[\lambda, \mu] & = \int_\Omega \left( - \bar{P}_{ij}\big|_{\bar{F}} \p_j \lambda_i  -  \bar{F}_{ij}\mu_{ij} - \bar{y}_i \p_j \mu_{ij} + \lambda_i b_i \right) \, dx  \quad - \int_\Omega \Big( \mu_{rs} + \p_j \lambda_i A_{ijrs}\big|_{\bar{F}} \Bigr) \left( F_{rs} - \bar{F}_{rs} \right) \, dx\\
          &  \quad + \frac{1}{2} \int_\Omega \Bigl( c_F \delta_{kr} \delta_{ls} - \p_j \lambda_i B_{ijklrs}\big|_{\bar{F}} \Bigr) \left( F_{kl} - \bar{F}_{kl} \right) \left( F_{rs} - \bar{F}_{rs} \right) \, dx\\
          & \quad - \int_\Omega \p_j \mu_{ij} (y_i - \bar{y}_i) \, dx + \frac{1}{2} \int_\Omega c_y (y_i - \bar{y}_i) (y_i - \bar{y}_i) \, dx \quad + \int_{\p \Omega_p} p_i \lambda_i\, da \quad + \int_{\p \Omega_y} y^{(b)}_i \mu_{ij} n_j \, da \\
          & = \int_\Omega \left( - \bar{P}_{ij}\big|_{\bar{F}} \p_j \lambda_i  -  \bar{F}_{ij}\mu_{ij} - \bar{y}_i \p_j \mu_{ij} + \lambda_i b_i \right) \, dx  \quad + \int_{\p \Omega_p} p_i \lambda_i\, da \quad + \int_{\p \Omega_y} y^{(b)}_i \mu_{ij} n_j \, da \\
          & \quad - \frac{1}{2} \int_\Omega \Big( \mu_{rs} + \p_j \lambda_i A_{ijrs}\big|_{\bar{F}} \Bigr) \left( F_{rs} - \bar{F}_{rs} \right) \, dx \quad - \frac{1}{2} \int_\Omega \p_j \mu_{ij} (y_i - \bar{y}_i) \, dx,
    \end{aligned}
\end{equation}
and using \eqref{eq:quad_dldu} again, \textit{a dual functional for nonlinear (Cauchy) elastostatics} is given by
\begin{equation}\label{eq:dual_elasotstat}
    \begin{aligned}
        & S_Q[\lambda, \mu]  = \\
        & \qquad - \frac{1}{2} \int_\Omega \biggl[  \Big( \mu_{rs} + \p_j \lambda_i A_{ijrs}\big|_{\bar{F}} \Bigr) \frac{1}{c_F} \left( \mathbb{K} \big|_{(\bar{F},\nabla \lambda)}^{-1} \right)_{rskl} \Big( \mu_{kl} + \p_j \lambda_i A_{ijkl}\big|_{\bar{F}} \Bigr)  \quad + \frac{1}{c_y} \p_j \mu_{ij} \p_k \mu_{ik}  \biggr] \, dx\\
        & \qquad + \int_\Omega \left( - \bar{P}_{ij}\big|_{\bar{F}} \p_j \lambda_i  -  \bar{F}_{ij}\mu_{ij} - \bar{y}_i \p_j \mu_{ij} + \lambda_i b_i \right) \, dx \quad + \int_{\p \Omega_p} p_i \lambda_i\, da \quad + \int_{\p \Omega_y} y^{(b)}_i \mu_{ij} n_j \, da.
    \end{aligned}
\end{equation}
\subsection{Formal second variation and stability of dual solutions}\label{sec:stab}
From the DtP equations \eqref{eq:quad_dldu}, a primal pair $\left( y^{(Q)}, F^{(Q)} \right)$ corresponding to the dual fields 
\[
\bar{D} := (x \mapsto \lambda(x) = 0, x \mapsto \mu(x) = 0)
\]
is given by $(x \mapsto \bar{y}(x), x \mapsto \bar{F}(x))$. 

We are now interested in obtaining the second variation of the dual functional $S_Q$ about the dual state $\bar{D}$, along sufficiently smooth perturbations $( x \mapsto \delta D(x)\,, \, x \mapsto dD(x) )$ for $c_F, c_y  \gg 1$, as required.

We make the (physically realistic) assumption that the nominal stress response of the material is such that $A, B$ are bounded functions on the space of second order tensors.

Noting
\begin{equation*}
    \begin{aligned}
        \mathbb{K}_{rskl}\big|_{(\bar{F}, \nabla \lambda)} = \delta_{kr} \delta_{ls} - \frac{1}{c_F} \p_j \lambda_i B_{ijrskl}\big|_{\bar{F}} \ & ; \qquad  \mathbb{K}_{rskl}\big|_{(\bar{F}, \nabla \lambda = 0)} = \delta_{kr} \delta_{ls} =: \mathbb{I}_{rskl}\\
        \mathbb{K} \mathbb{K}^{-1}  = \mathbb{I} & \Longrightarrow \frac{\p \mathbb{K}^{-1}}{\p \nabla \lambda} = - \mathbb{K}^{-1} \frac{\p \mathbb{K}}{\p \nabla \lambda} \mathbb{K}^{-1}\\
       \left. \frac{\p \mathbb{K}}{\p \nabla \lambda}\right|_{(\bar{F}, \nabla \lambda)} & = - \frac{1}{c_F} B\big|_{\bar{F}} \\
       \mathbb{K}_{rskl}\big|_{(\bar{F}, \nabla \lambda)}& = \mathbb{K}_{klrs}\big|_{(\bar{F}, \nabla \lambda)},
    \end{aligned}
\end{equation*}
the first variation of $S_Q$, at $D$ and in the direction $\delta D$, is given by
\begin{equation*}
    \begin{aligned}
        & \delta S_Q \big|_{\delta D} [D] = \\
        & \qquad - \int_{\Omega} \biggl( \frac{1}{c_F} \left( \mathbb{K} \big|_{(\bar{F},\nabla \lambda)}^{-1} \right)_{rskl} \left(\mu_{rs} + \p_j \lambda_i A_{ijrs}\big|_{\bar{F}} \right) \left(\delta \mu_{kl} + \p_j \delta \lambda_i A_{ijkl}\big|_{\bar{F}} \right) \ + \ \frac{1}{c_y} \p_j \mu_{ij} \p_k \delta \mu_{ik} \biggr)\, dx \\
        & \qquad + \int_\Omega \biggl( \frac{1}{2} \left(\mu_{rs} + \p_j \lambda_i A_{ijrs}\big|_{\bar{F}} \right) \left(\mu_{kl} + \p_j \lambda_i A_{ijkl}\big|_{\bar{F}} \right) \frac{1}{c_F} \biggl(\mathbb{K}^{-1}_{rsmn} \frac{\p \mathbb{K}_{mnij}}{(\p\nabla \lambda)_{pq}} \mathbb{K}^{-1}_{ijkl} \biggr)\bigg|_{(\bar{F}, \nabla \lambda)} \p_q \delta \lambda_p \biggr) \, dx \\
        & \qquad + \int_\Omega \left( - \bar{P}_{ij}\big|_{\bar{F}} \p_j \delta \lambda_i  -  \bar{F}_{ij} \delta \mu_{ij} - \bar{y}_i \p_j \delta \mu_{ij} + b_ i \delta \lambda_i \right) \, dx \quad + \int_{\p \Omega_p} p_i \delta \lambda_i\, da \quad + \int_{\p \Omega_y} y^{(b)}_i \delta \mu_{ij} n_j \, da
    \end{aligned}
\end{equation*}
with the evaluation
\begin{equation*}
    \delta S_Q \big|_{\delta D} [\bar{D}] = \int_\Omega \left( - \bar{P}_{ij}\big|_{\bar{F}} \p_j \delta \lambda_i  -  \bar{F}_{ij} \delta \mu_{ij} - \bar{y}_i \p_j \delta \mu_{ij} + b_i \delta \lambda_i \right) \, dx \quad + \int_{\p \Omega_p} p_i \delta \lambda_i\, da \quad + \int_{\p \Omega_y} y^{(b)}_i \delta \mu_{ij} n_j \, da.
\end{equation*}

Our interest is in examining the second variation of $S_Q$, at the dual state $\bar{D}$ and in arbitrarily fixed directions $\delta D$ and $d D$ (which we will assume to be sufficiently regular so that integrals containing mutual products of them and their derivatives further multiplied by bounded functions, are finite). To do so, it suffices to consider states $D$ with its derivatives to have similar regularity (surely $\bar{D}$ belongs to this class), and then find that
\begin{equation}\label{eq:sec_var}
    \begin{aligned}
       & d\delta S_Q \big|_{(\delta D, d D)} [D] = \\
       & \qquad  - \int_\Omega  \biggl( \frac{1}{c_F} \left(d \mu_{rs} + \p_j d \lambda_i A_{ijrs}\big|_{\bar{F}} \right)  \left( \mathbb{K} \big|_{(\bar{F},\nabla \lambda)}^{-1} \right)_{rskl} \left(\delta \mu_{kl} + \p_j \delta \lambda_i A_{ijkl}\big|_{\bar{F}} \right)  \ + \ \frac{1}{c_y} \p_j d \mu_{ij} \p_k \delta \mu_{ik} \biggr)\, dx \\
       & \qquad + \frac{1}{c_F^2}  \bigl( \cdots \bigr) \quad  + \quad \frac{1}{c_F^3} \bigl( \cdots \bigr),
    \end{aligned}
\end{equation}
where the coefficients of $c_F^{-2}, c_F^{-3}$ are finite (the negative powers on $c_F$ arise from repeated appearances of $\frac{\p \mathbb{K}}{\p \nabla \lambda}$ in the integrands comprising these terms), so that for $c_F \gg 1$ the second variation is controlled by the first term on the right hand side of \eqref{eq:sec_var}. Furthermore, for any dual state for which $\nabla \lambda$ is bounded on the domain, $c_F$ can be chosen large enough so that $\mathbb{K}$ and $\mathbb{K}^{-1}$ are positive-definite. 

Then, for $c_F, c_y \gg 1$, the second variation at such states is \textit{non-positive} and it is this feature of the dual nonlinear elastostatics problem that we term as its \textit{hidden convexity} (with a trivial change of sign). 

This result is also obtained rigorously and without approximation in Sec.~\ref{sec:var_anal}, with a slight reinterpretation of the DtP mapping-determining condition `$\frac{\p \scl}{\p U} = 0$.'

The second variation at the state $\bar{D}$ is given, regardless of the magnitude of $c_F$, by
\begin{equation*}
    d\delta S_Q \big|_{(\delta D, d D)} [\bar{D}] = - \int_\Omega  \biggl( \frac{1}{c_F} \left(d \mu_{rs} + \p_j d \lambda_i A_{ijrs}\big|_{\bar{F}} \right)  \left(\delta \mu_{rs} + \p_j \delta \lambda_i A_{ijrs}\big|_{\bar{F}} \right)  \ + \ \frac{1}{c_y} \p_j d \mu_{ij} \p_k \delta \mu_{ik} \biggr)\, dx
\end{equation*}
and is manifestly semi-definite, \textit{regardless of the definiteness properties of the physical tensor of elastic moduli $A$}.

An important corollary of the above observation is that if the primal base state $(\bar{y}, \bar{F})$ is chosen as a solution to the primal problem - even in a situation where the physical moduli field $x \mapsto A|_{\bar{F}(x)}$ takes positive-definite and negative definite values in distinct parts of the domain so that the \textit{primal problem \eqref{eq:elastostat} is unstable at this state} - a corresponding dual solution, given explicitly by the dual state $\bar{D}$, is neutrally stable to the class of variations considered and, in the sense of the DtP correspondence adopted here, confers a certain stability to the primal solution, when viewed as obtained from the dual variational principle.

\section{Dual nonlinear Elastodynamics}\label{sec:elastodyn}
An appealing feature of the dual methodology is that it seamlessly extends to time-dependent primal problems, leading to well-defined boundary-value-problems in space-time domains. We  consider the system of elastodynamic equations
\begin{subequations}
    \label{eq:elastodyn}
    \begin{align}
        \p_j P_{ij}(F) + b_i - \rho_0 \,\p_t v_i & = 0 \ \mbox{in} \ \Omega \times (0,T) \label{eq:equil}\\
        \p_j y_i - F_{ij} & = 0  \ \mbox{in} \ \Omega \times (0,T) \label{eq:y-F}\\
        \p_t y_i - v_i & = 0  \ \mbox{in} \ \Omega \times (0,T) \label{eq:y-v}\\
        P_{ij}n_j  = p_i  \ \mbox{on} \ \p\Omega_p \times (0,T) \qquad & ; \qquad y_i = y^{(b)}_i  \ \mbox{on} \ \p\Omega_y \times (0,T) \label{eq:bc_dyn}\\
        \left. y_i(x, 0) = y_i^{(0)}\right|_x \quad & ; \quad \left. v_i(x,0) = v_i^{(0)}\right|_x \ \mbox{on} \ \Omega \label{eq:ic_dyn},
    \end{align}
\end{subequations}
where $[0,T]$ is an interval of time, $P$ satisfies the same conditions as in Sec.~\ref{sec:elastostat} and $\rho_0$ is a given time-independent, positive, scalar-valued function on $\Omega$ representing the mass density in the reference configuration.

Following the steps of Sec.~\ref{sec:elastostat}, we define the pre-dual functional
\begin{equation*}
    \begin{aligned}
       &  \widehat{S}_H [v,y,F, \lambda, \gamma, \mu]  = \\
       &  \qquad \int_\Omega \int^T_0 \left( - P_{ij}\big|_F \p_j \lambda_i + \lambda_i b_i + \rho_0 \, v_i \p_t \lambda_i \right)\,dx dt  \quad + \quad \int_\Omega \int^T_0 (- y_i \p_j \mu_{ij} - F_{ij} \mu_{ij})\, dxdt \\
       & \quad +  \int_\Omega \int^T_0 \left( - y_i \p_t \gamma_i - \gamma_i v_i \right) \, dx dt \quad + \quad \int_\Omega \int^T_0  H(v, y, F;x, t) \, dx dt \\
    & \quad + \int_{\p \Omega_p} \int^T_0 p_i \lambda_i \, da  \ + \int_{\p \Omega_y} \int^T_0 y^{(b)}_i \mu_{ij} n_j \, da + \int_\Omega  v^{(0)}_i(x) \lambda_i(x, 0) \, dx - \int_\Omega  y^{(0)}_i(x) \gamma_i(x, 0) \, dx.
    \end{aligned}
\end{equation*}
In the above,  for any boundary term obtained on integration by parts, all primal boundary conditions and initial conditions are imposed. Any integral on a part of the boundary of the space-time region $\Omega \times (0, T)$ where no primal `boundary' data is available is simply ignored at this stage.

We define the boundary of the space-time domain as $\calB := (\p \Omega   \times (0,T)) \cup (\Omega \times \{0,T\})$.

Defining now
\begin{equation}
    \label{eq:defs}
    \begin{aligned}
       &  U := (v, y, F); \qquad D := (\lambda,  \gamma, \mu); \qquad \dee := (\nabla \lambda, \lambda, \p_t \lambda, \nabla \mu, \mu,\p_t \gamma, \gamma );\\
        & \scl_H(U,\dee,x,t) := - P_{ij}\big|_F \,\p_j \lambda_i + b_i \lambda_i + \rho_0 v_i \p_t \lambda_i - y_i \p_j \mu_{ij} - \mu_{ij} F_{ij} - y_i \p_t \gamma_i - v_i \gamma_i + H(v, y,F;x, t),
    \end{aligned}
\end{equation}
and requiring the choice of the potential $H$ to be such that it facilitates the existence of a function
\[
U = U_H(\dee, x, t)
\]
which satisfies
\begin{equation}\label{eq:dLdU_dyn}
\frac{\p \scl_H}{\p U} \left(U_H(\dee, x, t),\dee, x, t \right) = 0 \qquad \forall \ (\dee,x,t),
\end{equation}
it is observed that defining the \textit{dual} functional as
\begin{equation}\label{eq:dual_fnc_dyn}
        \begin{aligned}
        S_H[D] & := \widehat{S}_H \left[ v^{(H)}, y^{(H)}, F^{(H)}, \lambda, \mu, \gamma \right] \\
        & = \int_\Omega \int^T_0 \scl_H\left(U_H(\dee,x), \dee \right) \, dxdt + \int_{\p \Omega_p} \int^T_0 p_i \lambda_i \, da dt \\
       & \qquad  + \int_{\p \Omega_y} \int^T_0 y^{(b)}_i \mu_{ij} n_j \, da dt  + \int_\Omega  v^{(0)}_i(x) \lambda_i(x, 0) \, dx - \int_\Omega  y^{(0)}_i(x) \gamma_i(x, 0) \, dx \\
       & \qquad \mbox{with (arbitrarily) specified $\lambda$ on $\calB \backslash ( \ (\p \Omega_p \times (0,T)) \cup (\Omega \times \{0\}) \ )$ and}\\
       & \qquad \gamma \ \mbox{on} \ \calB \backslash ( \ (\p \Omega_y \times (0,T)) \cup (\Omega \times \{0\}) \ ),
    \end{aligned}    
\end{equation}
and noting \eqref{eq:dLdU_dyn}, the first variation of $S_H$ (about a state $D$ in the direction $\delta D$, with $\delta \lambda$ and $\delta \gamma$ constrained to vanish on parts of the boundary of $\calB$ on which $\lambda$ and $\gamma$ are specified, respectively), is given by
\begin{equation*}
    \begin{aligned}
        \delta S_H & = \bigintsss_\Omega \bigintsss_0^T \frac{ \p \scl_H}{\p \dee} \left(U_H(\dee,x), \dee, x,t \right) \cdot \delta \dee \, dx dt + \int_{\p \Omega_p} \int^T_0 p_i \delta \lambda_i \, da dt  \\
& \quad \ + \int_{\p \Omega_y} \int^T_0 y^{(b)}_i \delta \mu_{ij} n_j \, da dt + \int_\Omega  v^{(0)}_i(x) \delta \lambda_i(x, 0) \, dx - \int_\Omega  y^{(0)}_i(x) \delta \gamma_i(x, 0) \, dx.\\
    \end{aligned}
\end{equation*}
For the same reasons as in Sec.~\ref{sec:elastostat}, it can again be checked that the Euler-Lagrange equations and natural boundary conditions (on the space-time domain $\calB$) - of the dual functional \eqref{eq:dual_fnc_dyn} are exactly the system \eqref{eq:elastodyn} with $U$ substituted by $U_H(\dee,x,t)$.

As argued in \cite[Sec.~7]{action_2} and demonstrated in \cite{KA1}, the imposition of final-time conditions on the dual fields does not in any way constrain the dual problem from generating solutions to the primal \textit{initial}-boundary-value-problem \eqref{eq:elastodyn} through the DtP mapping.

We now develop the explicit form of the dual \textit{action} for nonlinear elastodynamics for the quadratically nonlinear stress:
\begin{equation}\label{eq:quad_strs_dyn}
\begin{aligned}
     P_{ij}(F(x,t)) & = \bar{P}_{ij}\left(\bar{F}(x,t) \right) + A_{ijkl}|_{\bar{F}(x,t)} \left( F_{kl}(x,t) - \bar{F}_{kl}(x,t)\right) \\
     & \quad + \frac{1}{2} B_{ijklmn}|_{\bar{F}(x,t)} \left( F_{kl}(x,t) - \bar{F}_{kl}(x,t)\right)\left( F_{mn}(x,t) - \bar{F}_{mn}(x,t)\right),
\end{aligned}
\end{equation}
We make an arbitrary choice of a base state
\[
(\ (x,t) \mapsto \bar{v}(x,t) = \p_t \bar{y}(x,t),\  (x,t) \mapsto \bar{y}(x,t), \ (x,t) \mapsto \bar{F}(x,t) = \nabla \bar{y} (x,t)\ )
\]
and adopt the following shifted quadratic for the potential $H$:
\begin{equation}\label{eq:Q_dyn}
Q(v, y,F;x,t) = \frac{1}{2} \left( c_v |v - \bar{v}(x,t)|^2 + c_y \left|y - \bar{y} (x,t) \right|^2 + c_F \left|F - \bar{F}(x,t)\right|^2  \right).
\end{equation}

Next, we form the Lagrangian corresponding to the system \eqref{eq:elastodyn} and the potential \eqref{eq:Q_dyn}: 
\begin{equation}\label{eq:Lagrangian_dyn}
    \begin{aligned}
         \scl_Q(v,y,F, \nabla \lambda, \p_t \lambda, \nabla \mu, \mu, \p_t \gamma, \gamma) & = 
        - P_{ij}\big|_F \,\p_j \lambda_i + b_i \lambda_i + \rho_0 v_i \p_t \lambda_i - y_i \p_j \mu_{ij} - \mu_{ij} F_{ij} - y_i \p_t \gamma_i - v_i \gamma_i \\
      & \quad + \frac{1}{2} \left( c_v |v - \bar{v}(x,t)|^2 + c_y \left|y - \bar{y}(x,t) \right|^2 + c_F \left|F - \bar{F}(x,t)\right|^2  \right).
    \end{aligned}
\end{equation}
The DtP mapping corresponding to \eqref{eq:Lagrangian_dyn}- \eqref{eq:quad_strs_dyn}-\eqref{eq:Q_dyn} is then given by the relations
\begin{equation*}
    \begin{aligned}
        c_v\left( v_i^{(Q)} - \bar{v}_i\right) & = \gamma_i - \rho_0 \, \p_t \lambda_i    \\
        c_y \left( y_i^{(Q)} - \bar{y}_i\right) & =  \p_j \mu_{ij} + \p_t \gamma_i \\
        c_F \, \mathbb{K}_{rskl}\big|_{(\bar{F}, \nabla \lambda)} \left( F_{kl}^{(Q)} - \bar{F}_{kl} \right) & = \mu_{rs} +  \p_j \lambda_i A_{ijrs}|_{\bar{F}},
    \end{aligned}
\end{equation*}
where $\mathbb{K}$ is defined in \eqref{eq:K}.
Utilizing these ingredients, the dual action,
$$S_Q[D] := \widehat{S}_Q \left[ v^{(Q)}, y^{(Q)}, F^{(Q)}, \lambda, \mu, \gamma \right]
$$ 
corresponding to `quadratically' nonlinear elastodynamics \eqref{eq:elastodyn} and \eqref{eq:Q_dyn} is given by (again dropping the superscript $^{(Q)}$ for brevity)
\begin{equation*}
    \begin{aligned}
        S_Q[D] & = - \int_\Omega \int^T_0  \bar{P}_{ij}\big|_{\bar{F}} \p_j \lambda_i  + \bar{v}_i \left( \gamma_i - \rho_0 \, \p_t \lambda_i \right) + \bar{y}_i \left( \p_j \mu_{ij} + \p_t \gamma_i \right) + b_i \lambda_i \, dx dt \\
        & \quad - \frac{1}{2} \int_\Omega \int^T_0 \frac{1}{c_v} \left( \gamma_i - \rho_0 \, \p_t \lambda_i \right) \left( \gamma_i - \rho_0 \, \p_t \lambda_i \right) \  + \  \frac{1}{c_y} \left( \p_j \mu_{ij} + \p_t \gamma_i \right) \left( \p_j \mu_{ij} + \p_t \gamma_i \right) \, dx dt\\
        & \quad - \frac{1}{2} \int_\Omega \int^T_0  \Big( \mu_{rs} + \p_j \lambda_i A_{ijrs}\big|_{\bar{F}} \Big) \frac{1}{c_F} \left( \mathbb{K} \big|_{(\bar{F},\nabla \lambda)}^{-1} \right)_{rskl} \Big( \mu_{kl} + \p_j \lambda_i A_{ijkl}\big|_{\bar{F}} \Big) \, dx dt\\
        & \quad + \int_{\p \Omega_p} \int^T_0 p_i \lambda_i \, da dt  \ + \int_{\p \Omega_y} \int^T_0 y^{(b)}_i \mu_{ij} n_j \, da dt  \\
        & \quad + \int_\Omega  v^{(0)}_i(x) \lambda_i(x, 0) \, dx - \int_\Omega  y^{(0)}_i(x) \gamma_i(x, 0), \, dx.
    \end{aligned}
\end{equation*}
\section{Local degenerate ellipticity of dual nonlinear Elasticity - statics and dynamics} \label{sec:deg_ell}
This treatment is a special case of that in \cite[Sec.~3]{dual_cont_mech_plas} and is included here for a self-contained account.

It is easier to see the following argument for a general setup, which is what we employ. For this section, let Greek lower-case indices belong to the set $\{0,1,2,3\}$ representing Rectangular Cartesian space-time coordinates; $0$ represents the time coordinate when the PDE is time-dependent. Let upper-case Latin indices belong to the set $\{1,2,3, \cdots,N \}$, indexing the components of the $N \times 1$ array of primal variables, $U$, with, possibly, a conversion to first-order form as necessary. Now consider the system of primal PDE
\begin{equation}\label{eq:conserv_source}
    \p_\alpha (\calF_{\Gamma\alpha}(U)) + G_\Gamma(U) = 0, \qquad \Gamma = 1, \ldots, N^*
\end{equation}
where upper-case Greek indices index the number of equations involved, after conversion to first-order form when needed.  
For example, consider the three conservation laws for nonlinear elastodynamics \eqref{eq:equil}. The conversion to first-order form (so that the Lagrangian $\scl$ \eqref{eq:L} below contains no derivatives in the primal variables) requires the addition of nine more primal variables $F$ and three more as $v$ \eqref{eq:y-F}, \eqref{eq:y-v}. These additional twelve relations can be written in the form
\begin{equation}\label{eq:first-order}
 \mathcal{A}_{\Gamma I \alpha} \p_\alpha U_I - \mathcal{B}_{\Gamma I} U_I = 0, \qquad \Gamma = 4,\ldots 15,   
\end{equation}
where $\mathcal{A}, \mathcal{B}$ are constant matrices (with $\mathcal{B}$ diagonal in many cases) that define the augmentation of the primal list from $(y)$ to $(y, F, v)$, and define the augmenting primal variables as, in general, linear combinations of the partial derivatives of components of $U$. The equation set \eqref{eq:first-order} can be expressed in  the form  \eqref{eq:conserv_source}.

We assume that the functions $U \mapsto \frac{\p^2 \calF_\Gamma}{\p U_P \p U_R}(U)$ and $U \mapsto \frac{\p^2 G_\Gamma}{\p U_P \p U_R}(U)$ are bounded functions on their domains.

Let $D$ be the $N^* \times 1$ array of dual fields and, as earlier, let us consider a shifted quadratic for the potential $H$, characterized by a diagonal matrix $[a_{kj}]$ with constant positive diagonal entries so that the Lagrangian takes the form
\begin{equation}\label{eq:L}
    \scl(U,D,\nabla D, \bar{U}) := - \calF_{\Gamma \alpha}(U) \p_\alpha D_\Gamma + D_\Gamma G_\Gamma(U) + \half (U_k - \bar{U}_k) a_{kj} (U_j - \bar{U}_j).
\end{equation}
Then the corresponding DtP mapping, obtained by `solving $\frac{\p \scl}{\p U} = 0$ for $U$ in terms of $(\nabla D, D, \bar{U})$,' is given by the implicit equation
\begin{equation}\label{eq:deg_ell_map}
    U_J^{(Q)}(\nabla D, D, \bar{U}) = \bar{U}_J + (a^{-1})_{JK} \left( \frac{\p \calF_{\Gamma \alpha}}{\p U_K} \bigg|_{U^{(Q)}(\nabla D, D, \bar{U})} \p_\alpha D_\Gamma - D_\Gamma \frac{\p G_\Gamma}{\p U_K}\bigg|_{U^{(Q)}(\nabla D, D, \bar{U})} \right).
\end{equation}
By the fundamental property of the dual scheme, the dual E-L equation is then given by
\begin{equation}\label{eq:dual_EL}
    \p_\alpha \Big( \calF_{\Gamma \alpha} \big( U(\nabla D, D, \bar{U})) \Big) + G_\Gamma ( U(\nabla D, D, \bar{U}) ) = 0
\end{equation}
(where we have dropped the superscript $^{(Q)}$ for notational convenience), whose ellipticity is governed by the term
\[
\mathbb{A}_{\Gamma \alpha \Pi \mu}(\nabla D, D, \bar{U}) := \frac{\p \calF_{\Gamma \alpha}}{\p U_P}\bigg|_{U(\nabla D, D, \bar{U})} \, \frac{\p U_P}{\p (\nabla D)_{\Pi \mu}}\bigg|_{U(\nabla D, D, \bar{U})}.
\]
From \eqref{eq:deg_ell_map} we have
\begin{equation*}
    \begin{aligned}
         a^{-1}_{PR} \left(  \delta_{\Gamma \Pi} \delta_{\mu \alpha} \frac{ \p \calF_{\Gamma \alpha}}{\p U_R} \, + \, \p_\alpha D_\Gamma \frac{ \p^2 \calF_{\Gamma \alpha}}{\p U_R \p U_S} \frac{\p U_S}{\p (\nabla D)_{\Pi \mu}}  - D_\Gamma \frac{ \p^2 G_\Gamma}{\p U_R \p U_S} \frac{\p U_S}{\p (\nabla D)_{\Pi \mu}} \right)     & =  \frac{\p U_P}{\p (\nabla D)_{\Pi \mu}} \\
        \Longrightarrow \left( \delta_{PS} - a^{-1}_{PR} \p_\alpha D_\Gamma \frac{ \p^2 \calF_{\Gamma \alpha}}{\p U_R \p U_S} + a^{-1}_{PR} D_\Gamma \frac{ \p^2 G_\Gamma}{\p U_R \p U_S} \right) \frac{\p U_S}{\p (\nabla D)_{\Pi \mu}} & = a^{-1}_{PR} \frac{\p \calF_{\Pi \mu}}{\p U_R},
    \end{aligned}
\end{equation*}
and so
\begin{equation*}
    \mathbb{A}_{\Gamma \alpha \Pi \mu}(0,0,\bar{U}) = \frac{\p \calF_{\Gamma \alpha}}{\p U_P}\bigg|_{\bar{U}} \, a^{-1}_{PR} \, \frac{\p \calF_{\Pi \mu}}{\p U_R}\bigg|_{\bar{U}},
\end{equation*}
which is \textit{positive semi-definite} on the space of $N^* \times 3$ (or $N^* \times 4$) matrices. This establishes the degenerate ellipticity of the dual system at the state $x \mapsto D(x) = 0$.

To examine the ellipticity-related properties of the system in a bounded neighborhood, say $\mathcal{N}$, of $(D = 0, \nabla D = 0) \in \mathbb{R}^{N^*} \times \mathbb{R}^{N^* \times \bar{\alpha}}, \bar{\alpha} = 3,4$, we define
\[
M_{PS} := \delta_{PS} - a^{-1}_{PR} \left( \p_\alpha D_\Gamma \frac{\p^2 \calF_{\Gamma \alpha}}{\p U_R \p U_S} + D_\Gamma  \frac{\p^2 G_\Gamma}{\p U_R \p U_S} \right),
\]
and note that
\[
\frac{\p U_P}{\p (\nabla D)_{\Pi \mu} } = M^{-1}_{PQ} a^{-1}_{QR} \frac{\p \calF_{\Pi \mu}}{\p U_R},
\]
where $M^{-1}$ exists and is positive definite by the boundedness of $\mathcal{N}$ and the second derivatives of the functions $\calF$ and $G$, along with an appropriately large choice of the elements of the diagonal matrix $[a_{ij}]$. 

The degenerate ellipticity or `convexity' of the system \eqref{eq:conserv_source} in the neighborhood $\mathcal{N}$ is now defined as the positive semi-definiteness of the matrix $\mathbb{A}$  on the space $\mathbb{R}^{N^* \times \bar{\alpha}}$ of matrices, and this in turn is governed by the matrix
\[
\mathbb{A}^{(sym)}_{\Gamma \alpha \Pi \mu}\bigg|_{(\nabla D, D, \bar{U})} = \frac{\p \calF_{\Gamma \alpha}}{\p U_P}\bigg|_{U(\nabla D, D, \bar{U})} \frac{1}{2} \left( M^{-1}_{PQ}\bigg|_{U(\nabla D, D, \bar{U})}  a^{-1}_{QR} + M^{-1}_{RQ}\bigg|_{U(\nabla D, D, \bar{U})}  a^{-1}_{QP}\right)\frac{\p \calF_{\Pi \mu}}{\p U_R}\bigg|_{U(\nabla D, D, \bar{U})}.
\]
By the positive definiteness of the matrix $[M_{PS}]$ in the neighborhood $\mathcal{N}$, it follows that
\[
\xi^{\Gamma \alpha} \ \mathbb{A}^{(sym)}_{\Gamma \alpha \Pi \mu}\bigg|_{(\nabla D, D, \bar{U})} \xi^{\Pi \mu}  \geq 0 \qquad \forall \qquad (D, \nabla D) \in \mathcal{N}, \quad \xi \in \R^{N^* \times \bar{\alpha}}
\]
which establishes a `local' degenerate ellipticity of the system \eqref{eq:conserv_source}. We note that degenerate ellipticity is stronger than the Legendre-Hadamard condition given by the requirement of positive semi-definiteness of $\mathbb{A}$ on the space of tensor products from $\mathbb{R}^{N^*} \otimes \mathbb{R}^{\bar{\alpha}}$, and not directly comparable to the strong-ellipticity condition, since it is weaker than the latter when restricted to the space $\mathbb{R}^{N^*} \otimes \mathbb{R}^{\bar{\alpha}}$ but simultaneously requiring semi-definiteness on the larger space of $\mathbb{R}^{N^* \times \bar{\alpha}}$. Also of note is that degenerate ellipticity does not preclude the failure of ellipticity characterized by the condition $\det [\mathbb{A}_{\Gamma \alpha \Pi \mu} n_\alpha n_\mu] \neq 0$ for all unit direction $n \in \R^ {\bar{\alpha}}$, $\bar{\alpha} = 3$ or $4$, thus allowing for weak (gradient) discontinuities of weak solutions $x \mapsto \dot{D}(x)$ of the linearization of \eqref{eq:dual_EL}.

If a solution of the primal system is close to the base state $\bar{U}$, then it seems natural to  expect, due to this local degenerate ellipticity, that such a solution can be obtained in a `stable' manner by the dual formulation \textit{designed by the choice of the auxiliary potential $H$ as a shifted quadratic about the base state $\bar{U}$}, for instance by an iterative scheme starting from a guess $(D = 0, U = \bar{U})$.

Our experience (Sec.~\ref{sec:compute} of this work, \cite{KA1, KA2, a_arora_thesis}) shows that this observation is of great practical relevance in using the dual scheme, and we consistently exploit it in all our computational approximations.

\section{Variational Analysis of dual nonlinear Elastostatics}\label{sec:var_anal}

\textit{In the remaining sections of the paper we will assume that the body force $b = 0$}.

In this section, we will follow the approach outlined in Section \ref{sec:elastostat} and study the corresponding dual functional $S_H$.
First, note that under sufficient growth and regularity assumptions on $H$ the relation \eqref{eq:dLdU} is satisfied if $U_H(\dee,x)$ maximizes $-\mathcal{L}_H(\cdot, \dee, x)$ for fixed values of $(\dee,x)$. 
If for all $D$ the maximizer is unique then we call the mapping $(\dee(x),x) \mapsto U_H(\dee(x),x)$ the dual-to-primal mapping. 
With a little abuse of notation we also use the same notation $U_H$ for the mapping between functions defined for a pair of functions $D=(\lambda,\mu)$ as $D \to ( x \to U_H(\dee(x),x))$. 
In this setting the dual functional can equivalently be written as (c.f.~Proposition \ref{prop: SH = I}, \eqref{eq: def I neo hook} and \eqref{eq: def I neo hook2} for the precise statement for the examples considered below) 
\begin{equation} \label{eq: tildeSH}
\tilde{S}_H[D]:= \sup_{y,F} - \widehat{S}_H[y,F,\lambda,\mu] = -S_H[D], 
\end{equation}
but the definition of $\tilde{S}_H$ makes sense more generally even in the absence of a well-defined mapping $U_H$, which suffices for the notion of a variational dual solution defined below. It is this definition of $\tilde{S}_H$ that we adopt in this section.
We note that the functional $\hat{S}_H$ is affine in the functions $\lambda$, $\nabla \lambda$, $\operatorname{div}(\mu)$ and $\mu$.
In particular, if formulated in appropriate spaces it will be continuous with respect to weak convergence of these quantities. 
This implies that $\tilde{S}_H$ as the $\sup$ over these weakly continuous functions is weakly lower semicontinuous.
This renders the dual problem $\tilde{S}_H$ to be rather well-suited for applying the direct method of the Calculus of Variations to prove the existence of a minimizer. 
If the dual functional $\tilde{S}_H$ is regular enough, such a minimizer would then satisfy the Euler-Lagrange equations of $S_H$ in a weak form (which are the same as for $\tilde{S}_H$) which correspond to the equations \eqref{eq:elastostat} if there is a well-defined mapping from the dual states to the primal states as explained above.
The direct method of the Calculus of Variations essentially corresponds to performing the following three steps:
\begin{enumerate}
    \item Show that $\inf_{\lambda,\mu} \tilde{S}_H > -\infty$ and take a minimizing sequence $(\lambda_k, \mu_k)$ such that $\tilde{S}_H(\lambda_k,\mu_k) \to \inf_{\lambda,\mu} \tilde{S}$.
    \item Prove up to a non-relabeled subsequence (weak) compactness for the minimizing sequence $(\lambda_k,\mu_k) \rightharpoonup (\lambda,\mu)$.
    \item Show that $S_H$ is sequentially lower-semicontinuous with respect to the convergence established in 2.~to conclude that $\tilde{S}_H(\lambda, \mu) \leq \liminf_{k \to \infty} \tilde{S}_H(\lambda_k,\mu_k) = \inf_{\lambda,\mu} \tilde{S}_H$ i.e., $(\lambda,\mu)$ minimizes $\tilde{S}_H$.
\end{enumerate}
Note that by the argumentation above the dual functional $\tilde{S}_H$ is (at least formally) lower semi-continuous with respect to weak convergence. 
Consequently, the main work in this setting is to establish step 2.

Let us also note that as the function $\hat{S}_H$ is affine in $\lambda$ and $\mu$ the function $\tilde{S}_H$ as the supremum over affine functions is automatically convex (however, most likely not strictly convex). 
This means that there is a large toolbox for the computational task to find a minimizer of the dual functional $\tilde{S}_H$.\\

In light of the discussion above, we define the notion of a \textit{variational dual solution} to the equation \eqref{eq:elastostat} and a \textit{dual solution}.
\begin{definition} \label{def: dual sol} Given a Borel-measurable function $H$ we say that a pair $(\lambda,\mu)$ is a variational dual solution to the equation \eqref{eq:elastostat} if it minimizes the dual functional $\tilde{S}_H$. 
Moreover, we say that a pair $(\lambda,\mu)$ is a dual solution to \eqref{eq:elastostat} if the dual functional $\tilde{S}_H$ is differentiable at $(\lambda,\mu)$, the pair $(\lambda,\mu)$ satisfies the weak form of the corresponding Euler-Lagrange equation for $\tilde{S}_H$ and there exists a smooth dual-to-primal mapping $U_H$ corresponding to $\tilde{S}_H$.
\end{definition}

By the considerations above, a dual solution gives rise to a weak solution of \eqref{eq:elastostat}. 
Note that for the notion of a variational dual solution in principle we do not require $H$ to satisfy any additional assumption. 
However, the notion of a dual variational solution is only  meaningful if the function $H$ is chosen in a way such that the dual functional $\tilde{S}_H$ allows for nontrivial minimizers.
For example, for $H=0$ it is easy to check that for the examples discussed below it follows $\tilde{S}_H[D] = +\infty$ except for $D = (0,0)$, where $\tilde{S}_H[D] = 0$. 
In particular, the dual functional is minimized at $(0,0)$ but every pair of primal variables is a maximizer for the middle term of \eqref{eq: tildeSH}.
This means that not every variational dual solution corresponds to a dual solution in the sense of Definition \ref{def: dual sol} or gives rise to a weak solution of \eqref{eq:elastostat}.
Consequently, ideally we would like to find a function $H$ such that the dual functional $\tilde{S}_H$ is regular and the dual-to-primal mapping is well-defined and regular.
In order to obtain these two properties, it appears to be reasonable (but in most cases not sufficient) to choose $H$ such that $H$ has a strictly positive definite Hessian at all points (in particular $H$ is convex) and the growth of $H$ at $+\infty$ is not smaller than the growth of the nonlinearity $P(F)$ so that a maximizer for the problem in the middle of \eqref{eq: tildeSH} exists and might be unique (at least for a  large number of dual configurations). 

In the case of a hyperelastic materials the typical variational strategy to obtain a weak solution to \eqref{eq:elastostat} would be to prove existence of a minimizer for its free energy functional subject to $y = y^{(b)}$ on $\Omega_y$. Much research in this direction was inspired by the seminal works on functionals with quasiconvex  \cite{Morrey52} or polyconvex energy densities \cite{Ball77,Muller90}.
However, these notions of convexity do not hold for numerous classical models, e.g.~the Saint Venant - Kirchhoff model discussed in the section below. 
In addition, even if existence of a minimizer for the free energy can be established, it is not clear that the minimizer satisfies the weak formulation of the corresponding Euler-Lagrange equation, see the discussion in \cite{Ball02} and the references therein (in the case of a convex integrand see, for example, \cite{KrKo22,Kr15} for a positive result).

Hence, we propose the concept of a variational dual solution as another variational solution concept for nonlinear elastostatics given by \eqref{eq:elastostat}. 

In the following we will show that the concept of variational dual solutions is applicable in a meaningful way to some standard models, namely, we will prove, for auxiliary potentials $H$ that render the dual functional $\tilde{S}_H$ non-trivial, existence of variational dual solutions to \eqref{eq:elastostat} for the Saint Venant-Kirchhoff model (in all dimensions) and the incompressible Neo-Hookean model (in two dimensions). 
We will focus on proving coercivity for the dual functionals $\tilde{S}_H$ which guarantees that step 2.~in the direct method can be performed.
In these examples the formal argument for the lower semi-continuity of the dual functional $\tilde{S}_H$ can also be made rigorous and hence existence of minimizers of $\tilde{S}_H$ can be established rigorously.

Following the arguments in the beginning of Section \ref{sec:elastodyn} one can set up analogously a dual variational problem in the elastodynamic setting. The dual variational problem is again (formally) lower semi-continuous and convex. Understanding the coercivity of the dual variational problem in the dynamical setting, which would in turn give rise to the existence of the analogue of dual variational solutions, is currently work in progress.

In the following we will assume that $\Omega \subseteq \R^d$ is open, connected and bounded with Lipschitz boundary.

\subsection{The Saint Venant-Kirchhoff model} 

We consider the Saint Venant-Kirchhoff energy
\[
W(F) = \mathsf{G} |E|^2 + \frac{\mathsf{L}}2 \operatorname{tr}(E)^2, 
\]
where $C = F^TF$ is the right Cauchy-Green tensor, $E = \half (C - Id)$ is the Green-Lagrange strain tensor, the shear modulus $\mathsf{G}>0$ and $\mathsf{G} + \frac{d}{2} \mathsf{L} >0$.
Correspondingly, we set
\[
P(F) = DW(F) = F \left( \mathsf{G} \,  F^T F + \frac{\mathsf{L}}2 |F|^2 Id - (\mathsf{G} + \frac{\mathsf{L} d}2) Id \right).
\]

For $y^{(b)} \in W^{1,4}(\Omega;\R^d)$ and $p^{(b)} \in L^{4/3}(\partial \Omega;\R^d)$ we look for weak solutions to the equations \eqref{eq:elastostat} which take the form
\begin{equation} \label{eq: St Venant weak} 
\begin{cases}
&F = \nabla y \text{ in } \Omega, \\
&\operatorname{div} P(F) = 0 \text{ in } \Omega, \\
&P(F) \cdot n = p^{(b)} \text{ on } \partial \Omega_p, \\
&y = y^{(b)} \text{ on } \partial \Omega_y.
\end{cases}
\end{equation}
Precisely, we look for $y \in W^{1,4}(\Omega; \R^{d})$ and $F \in L^{4}(\Omega; \R^{d\times d})$ such that for all $\lambda \in W^{1,4}(\Omega; \R^d)$ with $\lambda= 0$ on $\partial \Omega_y$ and $\mu\in L^{4/3}(\Omega; \R^{d \times d})$ with $\operatorname{div}(\mu) \in L^{4/3}(\Omega;\R^d)$ and $\mu n = 0$ on $\partial \Omega_p$ that
\[
\int_{\Omega} \operatorname{div }(\mu) \cdot y + \mu:F + P(F) : \nabla \lambda\, dx =  \int_{\partial \Omega_y}  y^{(b)} \cdot \mu n \, d\mathcal{H}^{d-1} + \int_{\partial \Omega_p} \lambda\cdot p^{(b)} \, d\mathcal{H}^{d-1}.
\]
Note that the boundary terms are well-defined but have to be understood in the sense of traces i.e., for $p \in [1,\infty]$ there are continuous trace operators $W^{1,p}(\Omega;\R^d) \to W^{1-1/p,p}(\partial \Omega;\R^d) \subseteq L^p(\partial \Omega)$ and $\{ \mu \in L^p(\Omega;\R^d): \operatorname{div}(\mu) \in L^p\} \to \left(W^{1-1/p}(\partial \Omega;\R^d) \right)'$, $\mu \to \mu  n$, see \cite[Theorem 18.40]{LeoniSobolev} and \cite[Theorem 1, page 204]{Lions} for the case $p=2$ (the general case follows with the same proof). In particular, the boundary terms are continuous with respect to weak convergence.

Next, let us mention that the stored energy density $W$ is not rank-$1$ convex and hence neither polyconvex nor quasiconvex (see \cite{Raoult}). In particular, the corresponding stored elastic energy is not weakly lower semi-continuous and existence for \eqref{eq: St Venant weak} via the minimization of the corresponding energy cannot be obtained by the direct method of the Calculus of Variations, c.f.~the discussion in the preamble of this section.

Following the approach explained above we define for 
\[
H(y,F) = \frac14 |y|^4 + \frac{\mathsf{G} + \mathsf{L} /2}{2} |F^T F|^2
\]
the function $g: \R^{d\times d} \times \R^d \times \R^{d\times d} \to \R$ given by
\[
g(A,a,B) = \sup_{F \in \R^{d \times d}, y \in \R^d} a \cdot y + A: F + B: P(F) - H(y,F)
\]
and consider the functional $I: \{ (\mu,\lambda) \in L^{4/3}(\Omega;\R^{d \times d}) \times W^{1,4}(\Omega;\R^d): \operatorname{div}(\mu) \in L^{4/3}(\Omega;\R^d), \mu n = 0 \text{ on } \partial \Omega_p  \text{ and } \lambda= 0 \text{ on } \partial \Omega_y \} \to \R$ defined as 
\begin{equation} \label{eq: def I}
I(\mu,\lambda) = \int_{\Omega} g(\mu,\operatorname{div}(\mu),\nabla \lambda) \, dx - \int_{\partial \Omega_y}  y^{(b)}\cdot \mu n \, d\mathcal{H}^{d-1} - \int_{\partial \Omega_p} \lambda\cdot p^{(b)} \, d\mathcal{H}^{d-1}.
\end{equation}

We will first show that $I$ agrees with the dual functional $\tilde{S}_H$.

\begin{proposition}\label{prop: SH = I}
    It holds for all $\mu \in L^{4/3}(\Omega;\R^{d \times d})$, $\lambda \in W^{1,4}(\Omega;\R^{d\times d})$ that
    \[
\tilde{S}_H[\lambda, \mu] = \int_{\Omega} g(\mu, \operatorname{div}(\mu), \nabla \lambda) \, dx - \int_{\partial \Omega_y}  y^{(b)}\cdot \mu n \, d\mathcal{H}^{d-1} - \int_{\partial \Omega_p} \lambda\cdot p^{(b)} \, d\mathcal{H}^{d-1}.
    \]
\end{proposition}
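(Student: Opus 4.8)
The plan is to reduce the functional supremum defining $\tilde{S}_H$ in \eqref{eq: tildeSH} to a pointwise (in $x$) maximization over the values $\big(y(x),F(x)\big)$, so that the definition of $g$ can be recognized directly inside the integral. Setting $b=0$ and writing out $-\widehat{S}_H$ from \eqref{eq:predual}, the boundary integrals over $\partial\Omega_p$ and $\partial\Omega_y$ carry only the fixed data $p^{(b)}$ and $y^{(b)}$ and are therefore independent of the maximization variables $(y,F)$; with $p=p^{(b)}$ they already coincide with the two boundary terms in \eqref{eq: def I}. The remaining bulk contribution is
\begin{equation*}
\int_{\Omega} \Big( \operatorname{div}(\mu)\cdot y + \mu : F + \nabla\lambda : P(F) - H(y,F) \Big)\, dx,
\end{equation*}
whose integrand, for fixed $x$, is precisely the expression maximized in the definition of $g$ under the identifications $a = \operatorname{div}(\mu)(x)$, $A=\mu(x)$, $B = \nabla\lambda(x)$.

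First I would pin down the class over which the supremum is taken. Crucially, in $\widehat{S}_H$ the primal fields $y$ and $F$ appear only \emph{algebraically} (the derivative of $y$ having been eliminated by the introduction of $F$ and the integration by parts producing the terms $-y\cdot\operatorname{div}(\mu)-\mu:F$), so no gradient constraint couples neighbouring points. Matching the quartic growth of $H$ against the cubic growth of $P$, finiteness of all terms forces $y\in L^4(\Omega;\R^d)$ and $F\in L^4(\Omega;\R^{d\times d})$, and these are the natural maximization classes. The decisive structural fact is that $L^4$ is a \emph{decomposable} space of measurable functions, which is exactly what legitimizes the interchange below.

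The heart of the matter, and the step I expect to be the main obstacle, is the exchange
\begin{equation*}
\sup_{(y,F)} \int_{\Omega} \phi\big(x,y(x),F(x)\big)\, dx \;=\; \int_{\Omega} \sup_{(\eta,G)\in\R^d\times\R^{d\times d}} \phi(x,\eta,G)\, dx \;=\; \int_\Omega g\big(\mu,\operatorname{div}(\mu),\nabla\lambda\big)\,dx,
\end{equation*}
where $\phi(x,\eta,G):=\operatorname{div}(\mu)(x)\cdot\eta + \mu(x):G + \nabla\lambda(x):P(G) - H(\eta,G)$. The inequality ``$\le$'' is immediate from the pointwise bound. For ``$\ge$'' I would invoke the interchange theorem for normal integrands (a Rockafellar-type measurable selection result): $\phi$ is a Carath\'eodory integrand (continuous in $(\eta,G)$ for a.e.\ $x$, and measurable in $x$ since $\mu,\operatorname{div}(\mu),\nabla\lambda$ are measurable while $P,H$ are continuous), hence a normal integrand, and on the decomposable class $L^4\times L^4$ the supremum of the integral equals the integral of the pointwise supremum. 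Concretely one selects measurable approximate maximizers and truncates them to bounded functions, which lie in $L^4$ since $\Omega$ is bounded, controlling the truncation error; the coercivity engineered into $H$ ensures the pointwise supremum is in fact attained, streamlining the selection.

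It remains to confirm measurability and well-posedness so that both sides are unambiguous elements of $(-\infty,+\infty]$. Taking $(\eta,G)=(0,0)$ and using $P(0)=0$, $H(0,0)=0$ gives $g\ge 0$ pointwise, so $x\mapsto g\big(\mu,\operatorname{div}(\mu),\nabla\lambda\big)$ is a nonnegative function; it is measurable because $g$, being a supremum of maps that are affine (hence continuous) in $(A,a,B)$, is lower semicontinuous, and lower semicontinuous functions composed with measurable maps are measurable. Thus $\int_\Omega g\,dx\in[0,+\infty]$ is well-defined, the two finite boundary terms may be subtracted, and the result is exactly $I(\mu,\lambda)$ of \eqref{eq: def I}. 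Assembling the three steps yields $\tilde{S}_H[\lambda,\mu]=I(\mu,\lambda)$, with the identity holding in $(-\infty,+\infty]$ (both sides being simultaneously $+\infty$ precisely when $g\big(\mu,\operatorname{div}(\mu),\nabla\lambda\big)$ fails to be integrable).
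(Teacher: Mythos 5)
Your argument is correct, and it reaches the proposition along the same reduction as the paper (strip the boundary terms, reduce to the identity $\sup_{y,F}\int_\Omega \phi(x,y(x),F(x))\,dx=\int_\Omega g(\mu,\operatorname{div}(\mu),\nabla\lambda)\,dx$, with ``$\leq$'' trivial), but the key lemma you use for ``$\geq$'' is genuinely different. The paper stays elementary: it introduces the countable family $\mathcal{A}$ of piecewise-constant fields with rational values on rational cuboids, notes that the pointwise supremum over $\mathcal{A}$ already equals $g$ (by continuity of the integrand in $(y,F)$), localizes via the set function $\mathcal{F}(O)$, checks superadditivity for well-separated open sets, and then applies the supremum-of-measures localization lemma \cite[Lemma 15.2]{braides2002gamma} to move the countable supremum inside the integral. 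You instead invoke the Rockafellar-type interchange of supremum and integration for normal integrands on decomposable spaces, and you verify the hypotheses that matter: $\phi$ is Carath\'eodory (hence normal), $L^4\times L^4$ is decomposable on the bounded domain $\Omega$, one admissible competitor $(y,F)=(0,0)$ gives integral $0>-\infty$ (using $P(0)=0$, $H(0,0)=0$, which also yields $g\geq 0$), and the quartic coercivity of $H$ against the cubic growth of $P$ makes the pointwise supremum attained so that measurable (approximate) maximizers can be selected, truncated to bounded functions in $L^4$, with the truncation error controlled by monotone convergence since $g\geq 0$. Your measurability remark ($g$ is lower semicontinuous as a supremum of continuous functions of $(A,a,B)$, hence Borel, so the composition with the measurable dual fields is measurable) is a point the paper leaves implicit. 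What each route buys: yours is shorter and more general --- it applies verbatim to any Carath\'eodory integrand with one admissible competitor, e.g.\ to the incompressible neo-Hookean functional with the extra variables $p$ and $\vartheta$, where the paper simply asserts ``the same argument'' --- at the cost of citing heavier measurable-selection machinery; the paper's route is self-contained, avoiding selection theorems entirely in favor of a countable dense family plus a standard $\Gamma$-convergence localization lemma.
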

\begin{proof}
It suffices to show that for $\mu \in L^{4/3}(\Omega;\R^{d \times d})$, $\lambda \in W^{1,4}(\Omega;\R^{d\times d})$ that it holds
\begin{equation}\label{eq: eq SH g}
\sup_{y \in L^{4}(\Omega;\R^d), F \in L^4(\Omega;\R^{d\times d})} \int_{\Omega} \operatorname{div}(\mu) \cdot y + \mu : F + P(F): \nabla \lambda  - H(y,F) \, dx = \int_{\Omega} g(\mu,\operatorname{div}(\mu),\nabla \lambda) \, dx.
\end{equation}
Clearly, it holds that
\[
\sup_{y \in L^{4}(\Omega;\R^d), F \in L^4(\Omega;\R^{d\times d})} \int_{\Omega} \operatorname{div}(\mu) \cdot y + \mu : F + P(F): \nabla \lambda  - H(y,F) \, dx \leq \int_{\Omega} g(\mu,\operatorname{div}(\mu),\nabla \lambda) \, dx.
\]
We will now prove the reverse inequality.
Let $\mathcal{O} = \{ O \subseteq \Omega: O \text{ open} \}$ and consider the countable family of functions 
\begin{align}
\mathcal{A} = \{ (y,F): &F= \sum_{n=1}^N q_n \chi_{E_n} \text{ and } y = \sum_{n=1}^N q_n' \chi_{E_n}, \text{ where }  q_n \in \mathbb{Q}^{d\times d}, \, q_n' \in\mathbb{Q}^d \text{ and } \\ &E_n \text{ is a cuboid with rational side lengths and corners in } \mathbb{Q}^d  \}.
\end{align}
Note that $\mathcal{A}$ is countable. Moreover, it holds for all $(a,A,B) \in \R^d \times \R^{d\times d} \times \R^{d\times d}$ and $x \in \Omega$
\[
\sup_{y,F \in \mathcal{D}} a \cdot y(x) + A:F(x) +  P(F(x)):B - H(y(x),F(x)) = g(A,a,B). 
\]
For $O \in \mathcal{O}$ let us now define $\mathcal{F}(O) = \sup_{y \in L^4, F \in L^4} \int_{O} y\cdot \operatorname{div}(\mu) + \mu:F + P(F):\nabla \lambda - H(y,F) \, dx$.
It can be shown that for $O,O' \in \mathcal{O}$ such that $\operatorname{dist}(O,O') > 0$ it follows that 
\[
\mathcal{F}(O \cup O') = \mathcal{F}(O) + \mathcal{F}(O').
\]
Moreover, it holds by definition for each $(y,F) \in \mathcal{A}$ and $O \in \mathcal{O}$ that
\[
\mathcal{F}(O) \geq \int_{O} y\cdot \operatorname{div}(\mu) + \mu:F + P(F):\nabla \lambda - H(y,F) \, dx.  
\]
Then it follows from \cite[Lemma 15.2]{braides2002gamma} that
\[
\mathcal{F}(\Omega) \geq \int_{\Omega} \sup_{(y,F) \in \mathcal{A}} \left[y\cdot \operatorname{div}(\mu) + \mu:F + P(F):\nabla \lambda - H(y,F) \right] \, dx = \int_{\Omega} g(\mu,\operatorname{div}(\mu),\nabla \lambda) \, dx.
\]
This proves the reverse inequality.
\end{proof}

As explained above, in order to prove existence of minimizers for $\tilde{S}_H$ it suffices by the direct method of the Calculus of Variations to prove that $\tilde{S}_H$ is coercive. Hence, we will now estimate the function $g$ from below.

\begin{proposition}\label{prop: lb g}
    There exists $c>0$ such that it holds for all $(a,A,B) \in \R^d \times \R^{d\times d} \times \R^{d \times d}$
    \[
    g(A,a,B) \geq c \left( |a|^{4/3} + |A|^{4/3} + |B |^4 \right) - \frac1c.
    \]
\end{proposition}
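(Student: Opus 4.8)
The plan is to exploit that the auxiliary potential $H$ splits additively, so that the supremum defining $g$ decouples into a $y$-problem and an $F$-problem:
\[
g(A,a,B) = \underbrace{\sup_{y \in \R^d}\Big( a\cdot y - \tfrac14 |y|^4 \Big)}_{=:\,g_1(a)} \; + \; \underbrace{\sup_{F \in \R^{d\times d}}\Big( A:F + B:P(F) - \tfrac{c_0}{2}|F^TF|^2 \Big)}_{=:\,g_2(A,B)},
\]
where $c_0 := \mathsf{G} + \mathsf{L}/2$. The term $g_1$ is the Legendre transform of a quartic: optimizing radially gives $g_1(a) = \tfrac34 |a|^{4/3}$ exactly, which already supplies the $|a|^{4/3}$ contribution with room to spare. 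Everything then reduces to proving $g_2(A,B) \geq c(|A|^{4/3} + |B|^4) - C$.

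Since $g_2$ is itself a supremum, I would bound it from below by inserting test matrices of the form $F = t\,\hat A$ or $F = t\,\hat B$ (with $\hat A = A/|A|$, $\hat B = B/|B|$ and scalar amplitude $t>0$) and then optimizing in $t$. Writing $P(F) = P_3(F) - (\mathsf{G} + \tfrac{\mathsf{L}d}{2})F$ with leading cubic part $P_3(F) := \mathsf{G}\,FF^TF + \tfrac{\mathsf{L}}{2}|F|^2 F$, the crucial constitutive input is the pointwise positivity of this cubic form in the $\hat B$ direction: using the identity $F:P_3(F) = \mathsf{G}|F^TF|^2 + \tfrac{\mathsf{L}}{2}|F|^4$ together with $|F^TF|^2 \geq \tfrac1d |F|^4$ one obtains, for every unit $\hat B$,
\[
\hat B : P_3(\hat B) = \mathsf{G}\,|\hat B^T \hat B|^2 + \tfrac{\mathsf{L}}{2} \; \geq \; \tfrac{\mathsf{G}}{d} + \tfrac{\mathsf{L}}{2} = \tfrac1d\Big(\mathsf{G} + \tfrac{d}{2}\mathsf{L}\Big) =: \kappa_0 > 0,
\]
where positivity is precisely the standing hypothesis $\mathsf{G} + \tfrac{d}{2}\mathsf{L} > 0$. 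This is the single fact that makes the $|B|^4$ growth appear, and I expect verifying it (rather than any later estimate) to be the conceptual heart of the matter.

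With this in hand I would split on a threshold $|B| \lessgtr C_0 |A|^{1/3}$ for a large constant $C_0$ to be fixed at the end. In the regime $|B| \leq C_0 |A|^{1/3}$, test with $F = t\hat A$: after rescaling $t = s|A|^{1/3}$ the value becomes $|A|^{4/3}\big(s - \gamma s^3 - \tfrac{c_0 \rho}{2} s^4\big)$ up to genuinely lower-order terms, where $\gamma$ (a fixed multiple of $|B|/|A|^{1/3}$) is bounded by a constant depending on $C_0$ and $\rho = |\hat A^T\hat A|^2 \in [1/d,1]$; because the driving term is \emph{linear} in $s$ with the correct sign, the bracket is pointwise monotonically decreasing in $\gamma$, so choosing the single amplitude optimal at the extreme admissible $\gamma$ gives a value bounded below by a positive constant uniformly, hence $g_2 \geq c|A|^{4/3} - C$. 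In the regime $|B| > C_0|A|^{1/3}$, test with $F = t\hat B$: rescaling $t = s|B|$ gives $|B|^4\big(\kappa_0' s^3 - \tfrac{c_0\rho}{2}s^4 - \eta s\big)$ plus lower order, where $\kappa_0' = \hat B:P_3(\hat B) \geq \kappa_0$ and the cross term $\eta = (A:\hat B)/|B|^3$ obeys $|\eta| \leq |A|/|B|^3 \leq C_0^{-3}$; choosing $C_0$ large renders $\eta$ negligible against the positive cubic bump $\max_{s>0}(\kappa_0 s^3 - \tfrac{c_0}{2}s^4)>0$, giving $g_2 \geq c|B|^4 - C$. In each regime the non-dominant of $|A|^{4/3}$ and $|B|^4$ is controlled by the dominant one by the very definition of the threshold, so combining the two cases yields $g_2 \geq c(|A|^{4/3} + |B|^4) - C$, and adding $g_1$ completes the bound.

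The step I expect to require the most care is the balanced/critical regime $|A| \sim |B|^3$, where $|A|^{4/3} \sim |B|^4$ and a naive single-scale test produces cross terms of exactly the target order. The resolution built into the plan is deliberately asymmetric: the pairing $A:F$ is linear, hence robust and extractable at a suitably small amplitude \emph{irrespective of the sign} of the competing $B:P_3$ term, whereas the $B$-growth is genuinely cubic and hinges on $\kappa_0 > 0$; separating these two mechanisms by one large threshold $C_0$, and absorbing the remaining lower-order remainders via Young's inequality with exponents $4$ and $4/3$, is what makes all the cross terms harmless.
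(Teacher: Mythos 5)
Your proposal is correct and follows essentially the same approach as the paper: optimize out $y$ exactly to obtain $\tfrac34|a|^{4/3}$, then bound the remaining supremum over $F$ from below by testing with matrices proportional to $A/|A|$ (at amplitude $\sim|A|^{1/3}$) or to $B/|B|$ (at amplitude $\sim|B|$) according to the relative size of $|A|$ versus $|B|^3$, with the coercivity in $B$ arising exactly as in the paper's third case from $F:\bigl(\mathsf{G}\,FF^TF+\tfrac{\mathsf{L}}{2}|F|^2F\bigr)=\mathsf{G}|F^TF|^2+\tfrac{\mathsf{L}}{2}|F|^4$, the bound $|F^TF|^2\geq\tfrac1d|F|^4$, and the standing hypothesis $\mathsf{G}+\tfrac d2\mathsf{L}>0$. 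The differences are only tactical --- the paper absorbs the linear part of $P$ into the shifted variable $\tilde A=\tfrac{1}{2\mathsf{G}+\mathsf{L}}\bigl(A-(\mathsf{G}+\tfrac{\mathsf{L}d}{2})B\bigr)$ and runs three explicit regimes (including an intermediate mixed-scale test matrix), whereas you keep $A$, treat the linear stress term as a lower-order remainder absorbed into the additive constant, and manage with two regimes and one large threshold $C_0$ --- and both versions deliver the stated bound.
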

\begin{proof}
By optimizing in $y$ it follows that
\begin{align*}
&g(A, a, B) \\
= &\frac34 |a|^{4/3} + \sup_F \left[ \left( A- (\mathsf{G} + \frac{\mathsf{L} d}{2}) B\right) : F + B: \left(\mathsf{G} \, F F^T F + \frac{\mathsf{L}}2 |F|^2 F \right) - \frac{\mathsf{G} + \mathsf{L} /2}{2} |F^T F|^2 \right] \\
=  &\frac34 |a|^{4/3} + 2(\mathsf{G} + \mathsf{L}/2) \sup_F \left[ \tilde{A} : F + \frac{1}{2(\mathsf{G} + \mathsf{L}/2)} B: \left(\mathsf{G} \, F F^T F + \frac{\mathsf{L}}2 |F|^2 F \right) - \frac14 |F^T F|^2 \right] \\
=: &\frac34 |a|^{4/3} + 2(\mathsf{G} + \mathsf{L}/2) h(\tilde{A}, B ).
\end{align*}
where $\tilde{A} = \frac{1}{2\mathsf{G} + \mathsf{L}}\left( A- (\mathsf{G} + \frac{\mathsf{L} d}{2}) B\right)$.
Next, we estimate $h$ from below.
\begin{enumerate}
    \item If $|B |^3 \leq \frac{27}{8^3} |\tilde{A}|$ we find for $F = |\tilde{A}|^{-2/3} \tilde{A}$ that
    \begin{align*}
    h(\tilde{A}, B ) &\geq  |\tilde{A}|^{4/3} - |\tilde{A}| |B | - \frac14 |F|^4 \\
    &= \frac34 |\tilde{A}|^{4/3} - |\tilde{A}| |B | \geq \frac38 |\tilde{A}|^{4/3} \geq \frac3{16} ( |\tilde{A}|^{4/3} + |B |^4 ),
    \end{align*}
    where we used that $|F^TF|^2 \leq |F|^4$.
    \item If $\frac{27}{8^3} |\tilde{A}| < |B |^3 \leq \frac{8d}{9} |\tilde{A}|$ we find for $F = \frac1{\sqrt{3}} |B |^{-1/2} |\tilde{A}|^{-1/2} \tilde{A}$ that
    \begin{align*}
        h(\tilde{A},B ) &\geq \frac1{\sqrt{3}} |\tilde{A}|^{3/2} |B |^{-1/2} - \frac1{3 \sqrt{3}} |\tilde{A}|^{3/2} |B |^{-1/2} - \frac1{36} |\tilde{A}|^2 |B |^{-2} \\
        &\geq \left( \frac2{3 \sqrt{3}} - \frac1{36} \sqrt{\frac{8^3}{27}}  \right) |\tilde{A}|^{3/2} |B |^{-1/2} \\
        &\geq \left( \frac2{3 \sqrt{3}} - \frac1{36} \sqrt{\frac{8^3}{27}}  \right) \left( \frac12 \left( \frac{9}{8d} \right)^{1/6} |\tilde{A}|^{4/3} + \frac12  \left( \frac{9}{8d}\right)^{3/2} |B |^4  \right) \\
        &\geq c \left( |\tilde{A}|^{4/3} + |B |^4  \right),
    \end{align*}
    where $c> 0$ is a universal constant.
    \item If $\frac{8d}{9} |\tilde{A}| < |B |^3$ we find for $F = 3 B $ using $\frac{1}d |B |^4 \leq |B ^T B |^2 \leq |B |^4$ that
    \begin{align*}
        h(\tilde{A}, B ) &\geq - 3 |\tilde{A}| |B | + 27 |B ^T B |^2 - \frac{81}{4} |B ^T B |^2 \\
        &\geq \frac{27}{4d} |B |^4 - 3 |\tilde{A}| |B | \\
        &\geq \frac{27}{8d} |B |^4 \geq \frac{27}{16d} |B |^4 + \frac{27}{16d} \left( \frac{8d}{9} \right)^{4/3} |\tilde{A}|^{4/3} \geq c \left( |\tilde{A}|^{4/3} + |B |^4 \right),
    \end{align*}
    where $c>0$ is a universal constant.
\end{enumerate}

Consequently, we find for a constant $c>0$ (that depends on $\mathsf{G}$ and $\mathsf{L}$ and might change from line to line) that 
\begin{align}
g(A, a, B) &\geq c \left( |a|^{4/3} + |\tilde{A}|^{4/3} + |B |^4 \right) \nonumber \\
&\geq c \left( |a|^{4/3} + |A|^{4/3} - |B |^{4/3} + |B |^4 \right) \nonumber \\
&\geq c \left( |a|^{4/3} + |A|^{4/3} + |B |^4 \right) - \frac1c. \label{eq: est g}
\end{align}
\end{proof}

\begin{remark}
Similarly, one can show that it also holds
\[
g(A,a,B) \leq C \left( |a|^{4/3} + |A|^{4/3} + |B |^4 + 1 \right),
\]
which shows that the dual functional $I=\tilde{S}_H$ is well-defined and its minimization is non-trivial.
\end{remark}

Let us now show the coercivity of the dual functional $I=\tilde{S}_H$.

\begin{proposition} \label{prop: coercive st venant}
The functional $I$ as defined in \eqref{eq: def I} is weakly coercive i.e., for every sequence $(\mu_k,\lambda_k)_k \subseteq \{ (\mu,\lambda) \in L^{4/3}(\Omega;\R^{d \times d}) \times W^{1,4}(\Omega;\R^d): \operatorname{div}(\mu) \in L^{4/3}(\Omega;\R^d), \mu n = 0 \text{ on } \partial \Omega_p  \text{ and } \lambda= 0 \text{ on } \partial \Omega_y \}$ such that $\sup_k I(\mu_k,\lambda_k) < \infty$ there exists a (not relabeled) subsequence and $(\mu, \lambda) \in \{ (\mu,\lambda) \in L^{4/3}(\Omega;\R^{d \times d}) \times W^{1,4}(\Omega;\R^d): \operatorname{div}(\mu) \in L^{4/3}(\Omega;\R^d), \mu n = 0 \text{ on } \partial \Omega_p  \text{ and } \lambda= 0 \text{ on } \partial \Omega_y \}$ such that
\[
\mu_k \rightharpoonup \mu \text { in } L^{4/3}(\Omega; \R^{d\times d}), \; \operatorname{div}(\mu_k) \rightharpoonup \operatorname{div}(\mu) \text { in } L^{4/3}(\Omega; \R^{d}) \text{ and } \lambda_k \rightharpoonup \lambda \text{ in } W^{1,4}(\Omega;\R^d).
\]
\end{proposition}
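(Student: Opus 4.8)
The plan is to use the sharp pointwise lower bound on $g$ from Proposition \ref{prop: lb g} to make the bulk part of $I$ superlinearly coercive in the three quantities $\mu$, $\operatorname{div}(\mu)$ and $\nabla\lambda$, and then to absorb the two boundary terms, which grow only linearly, by Young's inequality. First I would apply the estimate \eqref{eq: est g} with $(A,a,B) = (\mu,\operatorname{div}(\mu),\nabla\lambda)$ and integrate over $\Omega$ to obtain
\[
\int_\Omega g(\mu,\operatorname{div}(\mu),\nabla\lambda)\,dx \geq c\left( \|\mu\|_{L^{4/3}}^{4/3} + \|\operatorname{div}(\mu)\|_{L^{4/3}}^{4/3} + \|\nabla\lambda\|_{L^4}^4 \right) - \frac{|\Omega|}{c}.
\]
Since $\lambda = 0$ on $\partial\Omega_y$, the Poincar\'e inequality (using, as is natural, that $\partial\Omega_y$ has positive $\mathcal{H}^{d-1}$-measure) upgrades control of $\|\nabla\lambda\|_{L^4}$ to control of the full norm $\|\lambda\|_{W^{1,4}}$.

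Next I would estimate the two boundary terms in \eqref{eq: def I}. For the traction term, the trace embedding $W^{1,4}(\Omega)\hookrightarrow L^4(\partial\Omega)$ and H\"older's inequality give $|\int_{\partial\Omega_p}\lambda\cdot p^{(b)}\,d\mathcal{H}^{d-1}| \leq \|p^{(b)}\|_{L^{4/3}(\partial\Omega)}\|\lambda\|_{L^4(\partial\Omega)}\leq C\|\lambda\|_{W^{1,4}}$. For the place term, I would invoke the continuity of the normal-trace operator recalled in the preamble of this section: for $p=4/3$, $p'=4$, since $\mu\in L^{4/3}$ with $\operatorname{div}(\mu)\in L^{4/3}$, the normal trace $\mu n$ lies in $\big(W^{1-1/p,p'}(\partial\Omega)\big)'=\big(W^{1/4,4}(\partial\Omega)\big)'$ with $\|\mu n\|\leq C\big(\|\mu\|_{L^{4/3}}+\|\operatorname{div}(\mu)\|_{L^{4/3}}\big)$, and pairing against the trace of $y^{(b)}\in W^{1,4}(\Omega)$ (which lies in $W^{3/4,4}(\partial\Omega)\subseteq W^{1/4,4}(\partial\Omega)$) yields $|\int_{\partial\Omega_y} y^{(b)}\cdot\mu n\,d\mathcal{H}^{d-1}|\leq C\big(\|\mu\|_{L^{4/3}}+\|\operatorname{div}(\mu)\|_{L^{4/3}}\big)$. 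Both boundary terms are thus linearly bounded, and since the bulk exponents $4/3$ and $4$ exceed $1$, Young's inequality absorbs them, leaving
\[
I(\mu,\lambda)\geq \frac{c}{2}\left( \|\mu\|_{L^{4/3}}^{4/3} + \|\operatorname{div}(\mu)\|_{L^{4/3}}^{4/3} + \|\nabla\lambda\|_{L^4}^4 \right) - C.
\]

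With this coercivity estimate, the compactness argument is routine. Given a sequence with $\sup_k I(\mu_k,\lambda_k)<\infty$, the inequality bounds $\|\mu_k\|_{L^{4/3}}$, $\|\operatorname{div}(\mu_k)\|_{L^{4/3}}$ and, via Poincar\'e, $\|\lambda_k\|_{W^{1,4}}$ uniformly. As $L^{4/3}$ and $W^{1,4}$ are reflexive, I would pass to a non-relabeled subsequence with $\mu_k\rightharpoonup\mu$ in $L^{4/3}$, $\operatorname{div}(\mu_k)\rightharpoonup\xi$ in $L^{4/3}$, and $\lambda_k\rightharpoonup\lambda$ in $W^{1,4}$. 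Testing the identity $\int_\Omega \operatorname{div}(\mu_k)\cdot\varphi\,dx = -\int_\Omega\mu_k:\nabla\varphi\,dx$ against $\varphi\in C_c^\infty(\Omega)$ and passing to the limit identifies $\xi=\operatorname{div}(\mu)$, so $\operatorname{div}(\mu)\in L^{4/3}$.

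Finally I would verify that the weak limit is admissible. The set $\{\lambda\in W^{1,4}:\lambda=0 \text{ on }\partial\Omega_y\}$ is a closed linear subspace, hence weakly closed, so $\lambda=0$ on $\partial\Omega_y$; likewise $\{\mu\in L^{4/3}:\operatorname{div}(\mu)\in L^{4/3},\ \mu n=0 \text{ on }\partial\Omega_p\}$ is closed in the graph norm and therefore weakly closed, and the joint convergence $\mu_k\rightharpoonup\mu$, $\operatorname{div}(\mu_k)\rightharpoonup\operatorname{div}(\mu)$ is exactly weak convergence in that space, so $\mu n=0$ on $\partial\Omega_p$ persists. I expect the main obstacle to be not any single estimate but marshaling the correct trace and duality spaces for the two boundary terms — in particular verifying that the normal-trace pairing is controlled by $\|\mu\|_{L^{4/3}}+\|\operatorname{div}(\mu)\|_{L^{4/3}}$ at the subcritical exponent $4/3$ — since everything else reduces to Young's inequality and weak compactness in reflexive spaces.
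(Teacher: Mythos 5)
Your proposal is correct and follows essentially the same route as the paper's proof: the pointwise lower bound of Proposition~\ref{prop: lb g} applied to $(\mu,\operatorname{div}(\mu),\nabla\lambda)$, Poincar\'e's inequality to upgrade $\|\nabla\lambda\|_{L^4}$ to $\|\lambda\|_{W^{1,4}}$, linear control of both boundary terms via the trace and normal-trace operators (absorbed by the superlinear bulk exponents), and weak compactness in reflexive spaces together with weak continuity of the traces to keep the limit admissible. The only blemish is a fractional-index slip in the normal-trace space: for $p=4/3$, $p'=4$, the pairing runs through the trace space of $W^{1,p'}(\Omega)$, so $\mu n \in \left(W^{1-1/p',\,p'}(\partial\Omega)\right)' = \left(W^{3/4,4}(\partial\Omega)\right)'$ (the space the paper uses) rather than $\left(W^{1/4,4}(\partial\Omega)\right)'$, but since the trace of $y^{(b)}$ lies in $W^{3/4,4}(\partial\Omega)$ your displayed bound $\left|\int_{\partial\Omega_y} y^{(b)}\cdot \mu n\, d\mathcal{H}^{d-1}\right| \leq C\left(\|\mu\|_{L^{4/3}} + \|\operatorname{div}(\mu)\|_{L^{4/3}}\right)$ and the rest of the argument stand unchanged.
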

\begin{proof}
Let us first recall Poincar\'e's inequality $\| \lambda\|_{W^{1,4}} \leq C \| \nabla \lambda\|_{L^4}$ for all $\lambda\in W^{1,4}(\Omega; \R^d)$ such that $\lambda= 0$ on $\partial \Omega_y$.
    Hence, we obtain from Proposition \ref{prop: lb g} that
\begin{align*}
    I(\mu_k,\lambda_k) \geq &c \left( \| \operatorname{div}(\mu_k) \|_{L^{4/3}}^{4/3} + \| \mu_k \|_{L^{4/3}}^{4/3} + \| \nabla \lambda_k\|_{L^4}^{4} \right) - \frac1c \mathcal{L}^d(\Omega)  \\ &-  \|\mu_k n \|_{(W^{1-1/4,4}(\partial \Omega))'} \| y^{(b)} \|_{W^{1-1/4,4}(\partial \Omega)} - \| \lambda_k\|_{L^4(\partial \Omega)} \| p^{(b)} \|_{L^{4/3}(\partial \Omega)}  \\
    \geq &c \left( \| \operatorname{div}(\mu_k) \|_{L^{4/3}}^{4/3} + \| \mu_k \|_{L^{4/3}}^{4/3} + \|  \lambda_k\|_{W^{1,4}}^{4} \right) - \frac1c \mathcal{L}^d(\Omega) \\ &- C\left( \| \operatorname{div}(\mu_k) \|_{L^{4/3}} + \| \mu_k \|_{L^{4/3}} + \| \lambda_k\|_{W^{1,4}} \right).
\end{align*}
Since $\sup_k I(\mu_k,\lambda_k) < \infty$, it follows immediately that $\sup_k \| \operatorname{div}(\mu_k) \|_{L^{4/3}} + \| \mu_k \|_{L^{4/3}} + \| \lambda_k \|_{W^{1,4}} < \infty$. This implies the existence of $\mu \in L^{4/3}(\Omega; \R^{d \times d})$, $\nu \in L^{4/3}(\Omega;\R^d)$ and $\lambda \in W^{1,4}(\Omega;\R^d)$ such that (up to a not relabeled subsequence) it holds $\mu_k \rightharpoonup \mu$ in $L^{4/3}$, $\operatorname{div}(\mu_k) \rightharpoonup \nu$ in $L^{4/3}(\Omega;\R^d)$ and $\lambda_k \rightharpoonup \lambda$ in $W^{1,4}(\Omega;\R^d)$. Then it is straightforward to check that $\operatorname{div}(\mu) = \nu \in L^{4/3}(\Omega; \R^d)$. Moreover, as the trace operators are weakly continuous it follows that $\mu  n = 0$ on $\partial \Omega_p$ and  $\lambda = 0$ on $\partial \Omega_y$ in the sense of traces as discussed at the beginning of this section.
\end{proof}

\subsection{Incompressible Neo-Hookean model in two dimensions}

Let us now consider the stress in an incompressible Neo-Hookean model
\[
P(F,p) = \operatorname{det}(F) \left(\mathsf{G} \, F - p F^{-T} \right),
\]
where $\mathsf{G} > 0$ is the shear modulus, $\operatorname{det}(F) = 1$ and $p$ is the pressure which is to be determined.
For simplicity, let $\mathsf{G} = 1$. Moreover, set
\[
\tilde{P}(F,p) = F - p \operatorname{cof}(F).
\]
Note that in two dimensions $\tilde{P}$ depends linearly on $F$ and for $F$ with $\operatorname{det}(F) = 1$ it holds $\tilde{P}(F,p) = P(F,p)$.
Then for $y^{(b)} \in W^{1,4}(\Omega;\R^d)$ and $p^{(b)} \in L^{4/3}(\partial \Omega;\R^d)$ we look for weak solutions to 
\[
\begin{cases}
F = \nabla y \text{ in } \Omega, \\
\operatorname{det}(F) = 1 \text{ in } \Omega, \\
\operatorname{div } \tilde{P}(F,p) = 0 \text{ in } \Omega, \\
\tilde{P}(F,p) \cdot n = p^{(b)} \text{ on } \partial \Omega_p, \\
y = y^{(b)} \text{ on } \partial \Omega_y.
\end{cases}
\]
Namely, we seek to find $y \in L^2(\Omega;\R^2)$ and $F \in L^2(\Omega; \R^{2\times 2})$ such that it holds for all $\mu \in L^2(\Omega; \R^{2\times 2})$ with $\operatorname{div}(\mu) \in L^2(\Omega; \R^2)$, $\lambda \in W^{1,4}(\Omega; \R^2)$, and $\vartheta \in L^{\infty}(\Omega)$ that
\[
\int_{\Omega} \operatorname{div}(\mu) \cdot y + \mu : F + \vartheta ( \operatorname{det}(F) - 1 ) + \nabla \lambda : \tilde{P}(F) \, dx  = \int_{\partial \Omega_p} \lambda \cdot p^{(b)} \,  d \mathcal{H}^1 +  \int_{\partial \Omega_y} y^{(b)} \cdot \mu n \,  d \mathcal{H}^1.
\]
As explained for the Saint Venant-Kirchhoff model, the boundary integrals have to be understood in the sense of traces.

Note that compared to the general discussion in Section \ref{sec:elastostat} the additional constraint $\operatorname{det}(F) = 1$ is treated through an extra dual variable $\vartheta$, whereas the pressure $p$ is an additional primal variable. 
Following the same approach, we define the function $g: \R^{d\times d} \times \R^d \times \R^{d\times d} \times \R \to \R  \cup \{+\infty\}$
\[
g(A,a,B,s) = \sup_{F \in \R^{d \times d}, y \in \R^d, p \in \R} a \cdot y + s ( \operatorname{det}(F) - 1 ) + A: F + B: P(F,p) - H(y,p,F),
\]
where $H(y,p,F) = \frac12 y^2 + \frac12 |F|^2 + \frac14 p^4$.
Correspondingly, we define the functional 
\begin{equation*}
\begin{aligned}
    I: \big\{ (\mu,\lambda,\vartheta) \in L^{4/3}(\Omega;\R^{d \times d}) & \times W^{1,4}(\Omega;\R^d) \times L^{\infty}(\Omega):\\
    & \operatorname{div}(\mu) \in L^{4/3}(\Omega;\R^d), \mu n = 0 \text{ on } \partial \Omega_p  \text{ and } \lambda= 0 \text{ on } \partial \Omega_y \big\} \to \R 
\end{aligned}
\end{equation*}
by
\begin{equation} \label{eq: def I neo hook}
I(\mu,\lambda,\vartheta) = \int_{\Omega} g(\mu,\operatorname{div}(\mu),\nabla \lambda,\vartheta) \, dx - \int_{\partial \Omega_y}  y^{(b)}\cdot \mu n \, d\mathcal{H}^{d-1} - \int_{\partial \Omega_p} \lambda\cdot p^{(b)} \, \mathcal{H}^{d-1}.
\end{equation}
By the same argument as for the Saint Venant - Kirchhoff material in Proposition \ref{prop: SH = I} it holds for all $(\mu,\lambda,\vartheta) \in \{ (\mu,\lambda,\vartheta) \in L^{4/3}(\Omega;\R^{d \times d}) \times W^{1,4}(\Omega;\R^d) \times L^{\infty}(\Omega): \operatorname{div}(\mu) \in L^{4/3}(\Omega;\R^d), \mu n = 0 \text{ on } \partial \Omega_p  \text{ and } \lambda= 0 \text{ on } \partial \Omega_y \}$ that
\begin{align}
&I(\mu,\lambda,\vartheta) \\ = &\sup_{y \in L^2(\Omega;\R^2), F \in L^2(\Omega;\R^{2\times 2}), p \in L^4(\Omega)} \int_{\Omega} \operatorname{div}(\mu) \cdot y + \vartheta ( \operatorname{det}(F) - 1 ) + \mu: F + \nabla \lambda: P(F,p) - H(y,p,F) \, dx \nonumber \\& - \int_{\partial \Omega_p} \lambda \cdot p^{(b)} \,  d \mathcal{H}^1 -  \int_{\partial \Omega_y} y^{(b)} \cdot \mu n \,  d \mathcal{H}^1. \label{eq: def I neo hook2}
\end{align}
The right hand side of the above equation is the analogue of $\tilde{S}_H$ including the extra variable $\vartheta$.

As in the section above, in order to show existence of minimizers of $I$ it suffices to show that $I$ is coercive. 
Again, we first prove lower bounds for the function $g$.

\begin{proposition}\label{prop: lb neo hook}
There exist constants $C > c >0$ such that the following is true.
\begin{enumerate}
    \item If $|s| > 1$ then $g(A,a,B,s) = +\infty$ for all $(A,a,B) \in \R^{d \times d} \times \R^d \times \R^{d\times d}$. \label{item1}
    \item If $|s| \leq 1$ then it holds for $(A,a,B) \in \R^{d \times d} \times \R^d \times \R^{d\times d}$ \label{item2}
    \[
    g(A,a,B,s) \geq c (|a|^2 + |A + B|^2 + |B|^4 ) - 1
    \]
    \item If $|s| \leq 1$ then it holds for $(A,a,B) \in \R^{d \times d} \times \R^d \times \R^{d\times d}$ that \label{item3}
    \[
    g(A,a,B,s) \leq C \left( |a|^2 +  \frac1{1-|s|}|A+B|^2 + \frac{1}{(1-|s|)^2} |B|^4 + 1 \right).
    \]
\end{enumerate}
\end{proposition}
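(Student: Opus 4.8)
The plan is to evaluate the supremum defining $g$ one group of variables at a time, taking advantage of the special algebra of two dimensions, where the relevant stress $P(F,p)=F-p\,\operatorname{cof}(F)$ is \emph{linear} in $F$ and $\operatorname{cof}$ is a linear isometry. First I would optimize over $y$: the only $y$-dependence is $a\cdot y-\tfrac12|y|^2$, maximized at $y=a$ with value $\tfrac12|a|^2$, which already supplies the $|a|^2$ contribution in items \ref{item2}--\ref{item3}. It then remains to understand
\[
\sup_{F,p}\Big[\, s\,(\operatorname{det}F-1)+(A+B):F-p\,\operatorname{cof}(B):F-\tfrac12|F|^2-\tfrac14 p^4 \,\Big],
\]
where I have used the two–dimensional identities $\operatorname{det}F=\tfrac12\,F:\operatorname{cof}(F)$, the self-adjointness $B:\operatorname{cof}(F)=\operatorname{cof}(B):F$, and $|\operatorname{cof}(B)|=|B|$.

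The decisive observation is that the $F$-quadratic part equals $\tfrac12\,F:(s\,\operatorname{cof}-\mathrm{Id})F$, and in two dimensions $\operatorname{cof}$ has eigenvalue $+1$ on the spherical and skew parts of a matrix and $-1$ on the symmetric--traceless part, so $s\,\operatorname{cof}-\mathrm{Id}$ has eigenvalues $s-1$ and $-s-1$. This yields item \ref{item1} at once: if $|s|>1$ one eigenvalue is strictly positive, so letting $F\to\infty$ along that eigendirection (with $p=0$) drives the bracket to $+\infty$. For $|s|<1$ the form is negative definite, the inner supremum over $F$ is a finite concave maximization, and completing the square after projecting $M(p):=(A+B)-p\,\operatorname{cof}(B)$ onto the two eigenspaces gives the closed form
\[
g=\tfrac12|a|^2-s+\sup_{p}\Big[\,\frac{|M_+(p)|^2}{2(1-s)}+\frac{|M_-(p)|^2}{2(1+s)}-\tfrac14 p^4\,\Big],
\qquad M_\pm(p)=(A+B)_\pm\mp p\,B_\pm .
\]

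From this representation both remaining bounds reduce to a one-dimensional optimization over $p$. For the upper bound (item \ref{item3}) I would estimate each coefficient by $\tfrac1{2(1-|s|)}$, use $|M(p)|^2\le 2|A+B|^2+2p^2|B|^2$, and maximize $\tfrac{1}{1-|s|}p^2|B|^2-\tfrac14 p^4$ explicitly (maximum $\tfrac{|B|^4}{(1-|s|)^2}$ at $p^2=2|B|^2/(1-|s|)$), producing precisely the factors $\tfrac1{1-|s|}$ and $\tfrac1{(1-|s|)^2}$, with $-s\le 1$ absorbed into the additive constant. For the lower bound (item \ref{item2}) I would instead use $\tfrac1{2(1\pm s)}\ge\tfrac14$, reducing the bracket to $\tfrac14\big(|A+B|^2+|B|^2p^2+2\beta p\big)-\tfrac14 p^4$ for a scalar $\beta$, and test at $p=\pm|B|/\sqrt2$, choosing the sign so that $2\beta p\ge0$; since there $|B|^2p^2-p^4=\tfrac14|B|^4$ and the constant term contributes $|A+B|^2$, this gives $g\ge\tfrac1{16}\big(|a|^2+|A+B|^2+|B|^4\big)-1$.

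The main obstacle is the \emph{indefiniteness} of the determinant as a quadratic form in $F$: finiteness of the $F$-supremum is exactly the sharp threshold $|s|<1$, so $s$ cannot be treated as an inert parameter, and one must track the anisotropic splitting into the $\pm1$ eigenspaces of $\operatorname{cof}$ to capture the correct $(1-|s|)$-dependence in item \ref{item3}. A secondary point needing care is the borderline $|s|=1$, where the $F$-form degenerates and a coefficient becomes infinite; there one checks that $g=+\infty$ unless the offending eigencomponent of $M(p)$ vanishes identically, which is consistent with, and trivially dominated by, the stated lower bound.
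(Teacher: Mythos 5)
Your proof is correct, and its middle portion takes a genuinely different route from the paper's. Both arguments share the same skeleton: optimize out $y$ to harvest $\tfrac12|a|^2$, note $\det F=\tfrac12 F:\operatorname{cof}(F)$ hence $|\det F|\le\tfrac12|F|^2$, and prove item \ref{item1} by sending $F=t\,\mathrm{Id}$ (resp.\ an anticonformal direction) to infinity with $p=0$; your item \ref{item3} is also essentially the paper's computation --- the paper bounds $s\det F\le\tfrac{|s|}2|F|^2$ and completes the square in $F$ to get $\sup_p \tfrac1{2(1-|s|)}|A+B-p\operatorname{cof}(B)|^2-\tfrac14 p^4$, which is exactly your coefficient estimate $\tfrac1{2(1\mp s)}\le\tfrac1{2(1-|s|)}$ in spectral clothing, followed by the same one-dimensional maximization in $p$. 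The real divergence is in the lower bound, item \ref{item2}: the paper argues by a two-case split on the relative sizes of $|A+B|$ and $|B|^2$, plugging in explicit test matrices ($F=(A+B)/2$, $p=0$ in one regime; $\operatorname{cof}(F)=-\tfrac1{\sqrt8}|B|B$, $p=\tfrac1{\sqrt2}|B|$ in the other) and absorbing $s\det F$ via $|\det F|\le\tfrac12|F|^2$. You instead evaluate the concave $F$-supremum \emph{exactly} using the $\pm1$ eigenspace decomposition of $\operatorname{cof}$ in $2$d, obtaining the closed form $\sup_p\bigl[\tfrac{|M_+(p)|^2}{2(1-s)}+\tfrac{|M_-(p)|^2}{2(1+s)}-\tfrac14p^4\bigr]$, then use $\tfrac1{2(1\pm s)}\ge\tfrac14$ and the single test value $p=\pm|B|/\sqrt2$ with the sign chosen to discard the cross term; your arithmetic ($|B|^2p^2-p^4=\tfrac14|B|^4$ there) checks out and yields $c=\tfrac1{16}$. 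Your route buys a sharper object (an explicit formula for $g$ on $|s|<1$, from which all three items fall out of one representation) and avoids the paper's somewhat magic test matrices; the paper's route buys uniformity, since its test-function bounds use only $|s|\le1$ and so need no separate treatment of the borderline, whereas your exact formula degenerates at $|s|=1$. Your closing remark handles that borderline correctly in spirit, though ``trivially dominated'' is slightly loose: when the offending eigencomponents $(A+B)_+$ and $B_+$ vanish (say $s=1$), one still has to rerun the one-dimensional test on the surviving $M_-$ component to recover the stated bound --- a one-line check with the machinery you have already set up, but worth writing down.
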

\begin{proof}
First note that by optimizing in $y$ we find
\begin{align}
    g(A,a,B,s) = &\sup_{F \in \R^{d \times d}, y \in \R^d, p \in \R} a \cdot y + s ( \operatorname{det}(F) - 1 ) + A: F + B: P(F,p) - H(y,p,F) \nonumber \\
    =& \frac12 |a|^2 - s + \sup_{F \in \R^{d \times d}, p \in \R} s  \operatorname{det}(F)  + A: F + B: P(F,p) - \frac14 p^4 - \frac12 |F|^2 \nonumber \\
    =:& \frac12 |a|^2 - s + h(A,B,s). \label{eq: formula g}
\end{align}
We will now only work with the function $h$. 
We observe that it holds for all $F\in \R^{2\times 2}$ that $F : \operatorname{cof}(F) = 2 \operatorname{det}(F)$. 
In particular, $|\operatorname{\det}(F)| \leq \frac12 |F|^2$.
Let us now assume that $s>1$. Then consider for $t > 0$ the matrix $F = t Id$. In addition, set $p=0$. It follows that 
\[
h(A,B,s) \geq s t^2 - 2 |A+B| t - t^2 \stackrel{t \to \infty}{\longrightarrow} + \infty.
\]
The case $s<-1$ can be treated similarly.  This shows \ref{item1}.
Next, let us consider $|s|\leq 1$. First we assume that $\frac1{4\sqrt{8}} |B|^2 \leq |A+B|$. Then set $F = (A + B)/2$ and $p=0$ to obtain
\begin{align} \label{eq: A+B large}
    h(A,B,s) \geq (A+B): F - \frac12 |F|^2 + s \operatorname{det}(F) \geq \frac{|A+B|^2}2 - |F|^2 &= \frac{|A+B|^2}4  \\ &\geq \frac1{512} \left( |A+B|^2 + |B|^4 \right). \nonumber
\end{align}
For $|A+B| \leq \frac1{4\sqrt{8}}|B|^2$ consider $F \in \R^{2 \times 2}$ such that $\operatorname{cof}(F) =- \frac1{\sqrt8} |B| B$ and $p =  \frac1{\sqrt{2}} |B|$. Then
\begin{align}
    h(A,B,s) &\geq (A+B):F - B:(p \operatorname{cof}(F)) + s \operatorname{det}(F) - \frac14 p^4 - \frac12 |F|^2 \nonumber \\
    &\geq (A+B):F - B:(p \operatorname{cof}(F)) - \frac14 p^4 - |F|^2 \nonumber \\
    &\geq - \frac1{\sqrt{8}} |A+B| \, |B|^2 + \frac1{4} |B|^4 - \frac1{16} |B|^4 - \frac1{8} |B|^4 \nonumber \\
    &\geq - \frac1{32} |B|^4 + \frac1{16} |B|^4 = \frac1{32} |B|^4 \geq \frac1{64} \left( |A+B|^2 + |B|^4 \right) \label{eq: B large}
\end{align}
Combining \eqref{eq: formula g}, \eqref{eq: A+B large} and \eqref{eq: B large} yields 
\[
g(A,a,B,s) \geq \frac12 |a|^2 - |s| + h(A,B,s) \geq c ( |a|^2 + |A+B|^2 + |B|^4) - 1,
\]
which is \ref{item2}.

It remains to show \ref{item3}. 
We estimate
\begin{align*}
h(A,B,s) &\leq \sup_{F \in \R^{2 \times 2}, p \in \R} F : ( A + B - p \operatorname{cof}(B)) - \frac14 p^4 - \frac{1 - |s|}{2} |F|^2 \\
&= \sup_{p \in \R} \frac1{2(1-|s|)} |A + B - p \operatorname{cof}(B)|^2 - \frac14 p^4 \\
&\leq \frac1{(1-|s|)} |A+B|^2 + \sup_{p \in \R} \frac{p^2}{(1-|s|)} |\operatorname{cof}(B)|^2 - \frac14 p^4 \\
&\leq \frac1{(1-|s|)} |A+B|^2 + \frac{1}{(1-|s|)^2} |B|^4.
\end{align*}
Combining this with \eqref{eq: formula g} yields \ref{item3}.
\end{proof}

Eventually, we state the coercivity statement for the dual functional $I$.

\begin{proposition}
The functional $I$ as defined in \eqref{eq: def I neo hook} is weakly coercive i.e., for every sequence $(\mu_k,\lambda_k,\vartheta_k)_k \subseteq \{ (\mu,\lambda,\vartheta) \in L^{4/3}(\Omega;\R^{d \times d}) \times W^{1,4}(\Omega;\R^d) \times L^{\infty}(\Omega): \operatorname{div}(\mu) \in L^{4/3}(\Omega;\R^d), \mu n = 0 \text{ on } \partial \Omega_p  \text{ and } \lambda= 0 \text{ on } \partial \Omega_y \}$ such that $\sup_k I(\mu_k,\lambda_k,\vartheta_k) < \infty$ there exists a (not relabeled) subsequence and $(\mu, \lambda,\vartheta) \in \{ (\mu,\lambda,\vartheta) \in L^{4/3}(\Omega;\R^{d \times d}) \times W^{1,4}(\Omega;\R^d) \times L^{\infty}(\Omega): \operatorname{div}(\mu) \in L^{4/3}(\Omega;\R^d), \mu n = 0 \text{ on } \partial \Omega_p  \text{ and } \lambda= 0 \text{ on } \partial \Omega_y \}$ such that
\begin{align*}
&\mu_k \rightharpoonup \mu \text { in } L^{2}(\Omega; \R^{d\times d}), \; \operatorname{div}(\mu_k) \rightharpoonup \operatorname{div}(\mu) \text { in } L^{2}(\Omega; \R^{d}), \; \lambda_k \rightharpoonup \lambda \text{ in } W^{1,4}(\Omega;\R^d) \\ &\text{ and } \vartheta_k \stackrel{*}{\rightharpoonup} \vartheta \text{ in } L^{\infty}(\Omega).
\end{align*}    
\end{proposition}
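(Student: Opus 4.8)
The plan is to mirror the proof of Proposition \ref{prop: coercive st venant} for the Saint Venant--Kirchhoff model, replacing the lower bound of Proposition \ref{prop: lb g} by the one established in Proposition \ref{prop: lb neo hook}. The one genuinely new phenomenon is that the pressure-type constraint multiplier $\vartheta$ is \emph{automatically} bounded in $L^\infty$ along any finite-energy sequence, which is forced by the $+\infty$-branch of $g$. First I would observe that finiteness of $I(\mu_k,\lambda_k,\vartheta_k)$ forces $|\vartheta_k| \leq 1$ almost everywhere: if $|\vartheta_k| > 1$ on a set of positive Lebesgue measure, then by part \ref{item1} of Proposition \ref{prop: lb neo hook} the integrand $g(\mu_k,\operatorname{div}(\mu_k),\nabla\lambda_k,\vartheta_k)$ equals $+\infty$ on that set, so $I(\mu_k,\lambda_k,\vartheta_k) = +\infty$, contradicting $\sup_k I < \infty$. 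Hence $\|\vartheta_k\|_{L^\infty} \leq 1$ for all $k$, and the Banach--Alaoglu theorem (using $L^\infty = (L^1)'$) yields a not-relabeled subsequence with $\vartheta_k \stackrel{*}{\rightharpoonup} \vartheta$ in $L^\infty(\Omega)$, the weak-$*$ limit again satisfying $|\vartheta| \leq 1$ so that it lies in the admissible class.

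On this subsequence $|\vartheta_k| \leq 1$, so part \ref{item2} of Proposition \ref{prop: lb neo hook} applies pointwise with $A = \mu_k$, $a = \operatorname{div}(\mu_k)$, $B = \nabla\lambda_k$, $s = \vartheta_k$, giving the pointwise estimate
\[
g(\mu_k,\operatorname{div}(\mu_k),\nabla\lambda_k,\vartheta_k) \geq c\left( |\operatorname{div}(\mu_k)|^2 + |\mu_k + \nabla\lambda_k|^2 + |\nabla\lambda_k|^4 \right) - 1.
\]
Integrating over $\Omega$ and invoking Poincar\'e's inequality $\|\lambda_k\|_{W^{1,4}} \leq C\|\nabla\lambda_k\|_{L^4}$ (valid since $\lambda_k = 0$ on $\partial\Omega_y$), together with the trace estimates recalled at the beginning of the section to bound the boundary integrals schematically by $C(\|\mu_k\|_{L^2} + \|\operatorname{div}(\mu_k)\|_{L^2})\|y^{(b)}\|_{W^{1,4}}$ and $C\|\lambda_k\|_{W^{1,4}}\|p^{(b)}\|_{L^{4/3}(\partial\Omega)}$, I would arrive, after absorbing these linear boundary contributions into the superlinear bulk terms by Young's inequality, at
\[
I(\mu_k,\lambda_k,\vartheta_k) \geq c\left( \|\operatorname{div}(\mu_k)\|_{L^2}^2 + \|\mu_k+\nabla\lambda_k\|_{L^2}^2 + \|\lambda_k\|_{W^{1,4}}^4 \right) - C.
\]
From $\sup_k I < \infty$ this yields uniform bounds on $\|\operatorname{div}(\mu_k)\|_{L^2}$, $\|\mu_k+\nabla\lambda_k\|_{L^2}$ and $\|\lambda_k\|_{W^{1,4}}$. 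Since $\Omega$ is bounded, $\|\nabla\lambda_k\|_{L^2} \leq C\|\nabla\lambda_k\|_{L^4}$ is then controlled, and the triangle inequality $\|\mu_k\|_{L^2} \leq \|\mu_k + \nabla\lambda_k\|_{L^2} + \|\nabla\lambda_k\|_{L^2}$ recovers a uniform $L^2$ bound on $\mu_k$ itself.

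Finally, reflexivity of $L^2$ and $W^{1,4}$ gives a further subsequence with $\mu_k \rightharpoonup \mu$ in $L^2$, $\operatorname{div}(\mu_k) \rightharpoonup \nu$ in $L^2$ and $\lambda_k \rightharpoonup \lambda$ in $W^{1,4}$; testing against smooth compactly supported fields identifies $\nu = \operatorname{div}(\mu)$, and weak continuity of the trace operators preserves $\mu n = 0$ on $\partial\Omega_p$ and $\lambda = 0$ on $\partial\Omega_y$. Since $L^2(\Omega) \subseteq L^{4/3}(\Omega)$ on the bounded domain, the limit $(\mu,\lambda,\vartheta)$ lies in the admissible class, which completes the extraction.

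The main obstacle, and the only place the argument genuinely departs from the Saint Venant--Kirchhoff case, is the interplay of the extended-real-valued integrand with the constraint variable: one must first extract the $L^\infty$ bound on $\vartheta_k$ from the $+\infty$-branch of $g$ \emph{before} part \ref{item2} can even be invoked, and one must disentangle $\mu_k$ from the coupled quantity $\mu_k + \nabla\lambda_k$ that the lower bound actually controls (which is also why the natural compactness is in $L^2$ rather than $L^{4/3}$). Once these two points are handled, the remainder is a routine application of the direct method exactly as in Proposition \ref{prop: coercive st venant}.
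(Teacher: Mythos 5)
Your proof is correct and takes essentially the same route as the paper, whose own proof is a one-line remark that the argument is "similar to the proof of Proposition \ref{prop: coercive st venant} using Proposition \ref{prop: lb neo hook} \ref{item1}.~and \ref{item2}." Your elaboration --- first forcing $\|\vartheta_k\|_{L^\infty} \leq 1$ from the $+\infty$-branch of $g$ so that the quadratic/quartic lower bound applies, then disentangling $\mu_k$ from $\mu_k + \nabla\lambda_k$ via Poincar\'e and the triangle inequality before absorbing the linear trace terms --- is exactly the intended adaptation, carried out carefully and with the correct observation that the coercivity naturally lands in $L^2$ rather than $L^{4/3}$.
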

\begin{proof}
    The proof is similar to the proof of Proposition \ref{prop: coercive st venant} using Proposition \ref{prop: lb neo hook} \ref{item1}.~and \ref{item2}.
\end{proof}

We end this section with a brief comment on changing the base states.
\begin{remark}
Let us fix $\tilde{y} \in L^2(\Omega;\R^2)$ and $\tilde{F} \in L^2(\Omega;\R^{2\times 2})$ and note for $\tilde{H}(F,y,p;x) = H(F - \tilde{F}(x), y - \tilde{y}(x),p)$ that
\begin{align*}
&S_{\tilde{H}}(\lambda,\mu,\vartheta) \\:= &\sup_{y \in L^2, F \in L^2, p \in L^4} \int_{\Omega} \operatorname{div}(\mu) \cdot y + \vartheta ( \operatorname{det}(F) - 1 ) + \mu: F + \nabla \lambda: P(F,p) - \tilde{H}(F,y,p;x) \, dx \\& - \int_{\partial \Omega_p} \lambda \cdot p^{(b)} \,  d \mathcal{H}^1 -  \int_{\partial \Omega_y} y^{(b)} \cdot \mu n \,  d \mathcal{H}^1 \\
=& \sup_{y \in L^2, F \in L^2, p \in L^4} \int_{\Omega} \operatorname{div}(\mu) \cdot y + \vartheta ( \operatorname{det}(F) - 1 ) + \mu: F + \nabla \lambda: P(F,p) - \tilde{H}(F,y,p;x) \, dx \\& - \int_{\partial \Omega_p} \lambda \cdot p^{(b)} \,  d \mathcal{H}^1 -  \int_{\partial \Omega_y} y^{(b)} \cdot \mu n \,  d \mathcal{H}^1 \\
=& \sup_{y \in L^2, F \in L^2, p \in L^4} \int_{\Omega} (\operatorname{div}(\mu)-\tilde{y}) \cdot y + \vartheta ( \operatorname{det}(F) - 1 ) + (\mu + \nabla \lambda - p\operatorname{cof}(\nabla \lambda)- \tilde{F}): F - H(F,y,p) \, dx \\& + \int_{\Omega} \frac12 |\tilde{F}|^2 + \frac12 |\tilde{y}|^2 \, dx - \int_{\partial \Omega_p} \lambda \cdot p^{(b)} \,  d \mathcal{H}^1 -  \int_{\partial \Omega_y} y^{(b)} \cdot \mu n \,  d \mathcal{H}^1 \\
=& \int_{\Omega} g(\mu - \tilde{F},\operatorname{div}(\mu)-\tilde{y},\nabla \lambda,\vartheta) \, dx - \int_{\partial \Omega_y}  y^{(b)}\cdot \mu n \, d\mathcal{H}^{d-1} - \int_{\partial \Omega_p} \lambda\cdot p^{(b)} \, \mathcal{H}^{d-1} + \int_{\Omega} \frac12 |\tilde{F}|^2 + \frac12 |\tilde{y}|^2 \, dx.
\end{align*}
This functional has the same coercivity and lower-semicontinuity properties as $\tilde{S}_H$. Hence, existence of minimizers for this functional is also guaranteed. The question of whether such a minimzer gives rise to a weak solution of the primal problem hinges on the regularity of the function $g$ on the values of a minimizer. However, Proposition \ref{prop: lb neo hook} shows that at least at $\vartheta = \pm 1$ the function $g$ is not differentiable. Making good choices for $\tilde{F}$ and $\tilde{y}$ could enable us to guarantee that a minimizer $(\lambda,\mu,\vartheta)$ of $\tilde{S}_{\tilde{H}}$ only takes values such that $g$ is actually differentiable in the points $(\mu - \tilde{F}, \operatorname{div}(\mu) - \tilde{y},\nabla \lambda,\vartheta)$. 
\end{remark}
\section{A computational case study: non-convex elasticity in 1-d}\label{sec:compute}
In this section, we present computational approximations of our dual scheme applied to nonconvex elastostatics and elastodynamics in 1 space dimension and time; this provides intuition for its capabilities and evidence for its practical feasibility.

 The specific variational and PDE problems we study in the elastostatic case are described in Sec.~\ref{sec:comp_form}.  The finite element method based algorithm used to solve the problem is developed in Sec.~\ref{subsec:alg_stat}. Results of the computational studies are presented and discussed in Sec.~\ref{sec:results}. In particular, in Sec.~\ref{sub_sec:case_5} is presented and discussed the stability of some dual solutions to elastodynamic problems that are `Hadamard unstable' in the primal, classical elasticity formulation, in the sense of lacking continuous dependence on initial data because some regions of the domain have initial conditions where hyperbolicity is lost. While it is \textit{not} our goal to discuss the (de)merits of conventional elasticity theory to model such situations, we give a physical interpretation of the problem that is studied. This section also contains a brief description of the extension of the computational scheme to elastodynamics.

\subsection{Formulation}\label{sec:comp_form}

We computationally approximate solutions to a double-well elastostatics problem in 1 space dimension without any higher-order (strain-gradient) regularization. The objective  is to compute critical points of the following energy functional:
\begin{equation}\label{eqn:primal_func}
 I = \int_0^1 \left( \left( u_x -1\right)^2 - 1 \right)^2 + \left (u - \alpha x\right)^2 \,dx; \qquad u(0) = 0; \quad u(1) = \alpha^*.
\end{equation}
We use the notation $\p_i a = a_i, i = x,t$, and $\alpha, \alpha^*$ are specified constants. The function $u$ corresponds to the displacement. The Euler-Lagrange equations of \eqref{eqn:primal_func} are
\begin{subequations}
    \label{eqn:primal_EL}
    \begin{align}
        u_x - e & = 0 \\
        2\p_x \left((e-1)^3 - (e-1)\right) - \left(u - \alpha x\right) & = 0 \\
        u(0) = 0  &; \qquad  u(1) = \alpha^*,
     \end{align}
\end{subequations}
where $e$ corresponds to the strain. The stress, $\sigma$, for this model is given by
\[
\sigma = 4(e-1)\left((e - 1)^2 -1)\right).
\]

Using the dual (Lagrange multiplier) fields $\lambda$ and $\mu$ for the two equations, we define the pre-dual functional 
\begin{equation*}\label{eqn:pre_dual_ex}
     \widehat{S}\, \left[u,\lambda,e,\mu \right] \,=\, \int_0^1\,\, (-u\lambda_x - \lambda e - 2((e-1)^3 - (e-1)) \mu_x - \mu(u - \alpha x) + H(u,e))\,dx + \alpha^* \lambda(1),
\end{equation*}
where the requirement on the function $H$ is that, defining 
\[
\dee := \left( \lambda, \lambda_x, \mu, \mu_x \right),
\]
the following equations are solvable for $u$, $e$ in terms of $\dee:$
\begin{equation}
    \label{eq:mapping}
    \begin{aligned}
        -\lambda_x - \mu + \p_u H &=0 \\
        -2(3(e-1)^2-1)\mu_x - \lambda + \p_e H &=0.
    \end{aligned}
\end{equation}
Choosing a ‘shifted quadratic’ and ‘shifted cubic’ form for the potential $H$:
\begin{equation*}
H(u, e; x) = \frac{1}{2}\left( c_u(u-\bar{u}(x))^2\right) +\frac{1}{2}\left(c_e(e - \bar{e}(x))^2\right)
+\frac{1}{3}\left(c_e(e - \bar{e}(x))^3\right),
\end{equation*}
we obtain the following \textit{dual-to-primal} (DtP) mapping equations
\begin{equation}
    \label{eq:Dtp}
    \begin{aligned}
        & u^{(H)}(\lambda_x,\mu,x) = \frac{ \lambda_x + \mu}{c_u} + \bar{u}(x) \\
        & -6\left(e^{(H)}(\mu_x, \lambda,x) -1 \right)^2\mu_x + 2\mu_x -\lambda \\
        & + c_e \left(e^{(H)}(\mu_x, \lambda,x)-\bar{e}(x) \right) \left|e^{(H)}(\mu_x, \lambda,x) - \bar{e}(x)\right| + c_e\left(e^{(H)}(\mu_x, \lambda,x) - \bar{e}(x)\right) = 0.
    \end{aligned}
\end{equation}
\textit{Henceforth, the function $u^{(H)}$ will be referred to as $\hat{u}$ with the understanding that the symbol carries argument $\dee$ and $\bar{u}$. This also applies to $e^{(H)}$}:
\begin{equation*}
    \hat{u}(x) := u^{(H)}(\lambda_x(x), \mu(x), x); \qquad \hat{e}(x): = e^{(H)}(\mu_x(x), \lambda(x), x).
\end{equation*}
It is clear from \eqref{eq:mapping} and \eqref{eq:Dtp} that $\hat{u} = \bar{u}$ and $\hat{e} = \bar{e}$ is a value of the DtP mapping for $\dee =0$.

We substitute the DtP mapping \eqref{eq:Dtp} in the functional $\widehat{S}$ to define the dual functional 
\begin{equation}
\label{eqn:dual_exact}
     S\, \left[\lambda, \mu\right] \,=\, \int_0^1\,\, \left(-\hat{u}\lambda_x - \lambda \hat{e} - 2\left((\hat{e}-1)^3 - (\hat{e}-1)\right) \mu_x - \mu\left(\hat{u} - \alpha x\right) + H\left(\hat{u},\hat{e}\right)\right)\,dx + \alpha^*\lambda(1).
\end{equation}
The requirement \eqref{eq:mapping} ensures that the dual E-L system of \eqref{eqn:dual_exact} is simply the primal system \eqref{eqn:primal_EL}, with the replacements $u \to \hat{u}, e \to \hat{e}$. Thus, the critical points for the dual functional $S$ \eqref{eqn:dual_exact} is also a critical point of the primal functional $I$ \eqref{eqn:primal_func}, interpreted through the DtP mapping. The algorithm employed to obtain such critical points is discussed next.

\subsection{Finite element algorithm}\label{subsec:alg_stat}
Denoting the dual fields as
\[
D = (D_1, D_2) := (\lambda, \mu),
\]
the following weak form suffices to compute solutions to the dual problem \eqref{eqn:weak_form}, which also constitutes the first variation of the dual functional \eqref{eqn:dual_exact} set to 0:
\begin{equation}
\label{eqn:weak_form}
    \begin{aligned}
      R[D; \delta D]  & = -\int_0^1\,\hat{u}\delta \lambda_x  \,dx \,- \int_0^1\,\hat{e}\delta \lambda\,dx\\
      & \quad - \int_0^1\,2\left(\left(\hat{e}-1\right)^3 - (\hat{e}-1)\right) \delta\mu_x\,dx  \,-
     \int_0^1\,\left(\hat{u}- \alpha x\right) \delta\mu \,dx + \alpha^*\delta\lambda(1) = 0;\\
     & \qquad \mu(0) = 0, \qquad \mu(1) = 0.
     \end{aligned}
\end{equation}
A Dirichlet boundary condition is applied only on $\mu$. A Neumann boundary condition is applied on $\lambda$, corresponding to the Dirichlet boundary condition on $u$ in the primal problem. 

We use the Finite Element (FE) method to discretize the problem. A linear span of globally continuous, piecewise smooth finite element shape functions corresponding to a FE mesh for the domain  is used to achieve this discretization. These shape functions are represented by $N^{\left(\cdot\right)}$, where ($\cdot$) denotes the index of the node under consideration. We use $C^0$ FE shape functions to approximate the dual fields. Hence, the direct evaluation of primal fields through the DtP mapping exhibit discontinuities in the domain in general, even when approximating continuous primal solutions. This is because the DtP mappings involve derivatives of the dual fields. To deal with this feature, we use an $L^2$ projection  of the DtP mapping generated primal $u$ field from the dual solution to define the corresponding continuous primal field.

The discretized dual fields and test functions are expressed as
\[
D_i(x) = D^A_i N^A(x); \qquad   \delta D_i(x) = \delta D^A_i N^A(x); \qquad dD_i(x) = dD^A_i N^A(x), \qquad i = 1,2,
\]
where $D^A_i$ denote the finite element nodal degrees of freedom (which are the coefficients of a linear combination of  a finite dimensional representation of the dual fields). The discretized version of \eqref{eqn:weak_form} generates a discrete residual $R^A_i(\dee)$ given by 
\begin{equation*}
    R[D; \delta D] = \delta D^A_i R^A_i(\dee)
\end{equation*}
and its variation in the direction $dD$, the Jacobian
\begin{equation*}
    J[D; \delta D, d D] = \delta D^A_i \, J^{AB}_{ij}(\dee) \, dD^B_j
\end{equation*}
given by
\begin{equation}
\label{eqn:jacobian}
    \begin{aligned}
     J[D;\delta D,dD]\, =&\,  -\int_0^1\, \delta \lambda_x \left[\frac{\p \hat{u}}{\p \lambda_x}\,d\lambda_x + \frac{\p \hat{u}}{\p \mu}\,d\mu \right] \,dx \,- 
     \int_0^1\,\delta \lambda  \left[\frac{\p \hat{e}}{\p \mu_x}\,d\mu_x + \frac{\p \hat{e}}{\p \lambda}\,d\lambda \right]\,dx\\
     &\,- \int_0^1\,2\left(3\left(\hat{e}-1\right)^2 - 1\right)\delta\mu_x\left[\frac{\p \hat{e}}{\p \mu_x}\,d \mu_x + \frac{\p \hat{e}}{\p \lambda}\,d\lambda \right] \,dx \\\,-
     &\int_0^1\,\delta\mu \left[\frac{\p \hat{u}}{\p \lambda_x}\,d\lambda_x + \frac{\p \hat{u}}{\p \mu}\,d\mu \right] \,dx\\
     \end{aligned}
\end{equation}
that are used in a Newton Raphson scheme. Here, $R^A_i, J^{AB}_{ij}$ are given by the expressions
\begin{equation*}
    \begin{aligned} 
    &R^A_1 = \,  -\int_0^1\,\hat{u} N^A_x  \,dx\,- \int_0^1\,\hat{e}N^A \,dx\\
    &R^A_2 = - \int_0^1\,2\left(\left(\hat{e}-1\right)^3 - (\hat{e}-1)\right) N^A_x\,dx  \,-\int_0^1\,\left(\hat{u} - \alpha x\right) N^A \,dx   
     \end{aligned}
\end{equation*}
and
\begin{equation*}
    \begin{aligned}
    \\&J^{AB}_{11} = \,  -\int_0^1\,\frac{\p \hat{u}}{\p \lambda_x}N^A_xN^B_x  \,dx\,- \int_0^1\,\frac{\p \hat{e}}{\p \lambda}N^AN^B \,dx\\
    &J^{AB}_{12} = \,  -\int_0^1\,\frac{\p \hat{e}}{\p \mu_x}N^AN^B_x  \,dx\,- \int_0^1\,\frac{\p \hat{u}}{\p \mu}N^A_xN^B \,dx\\
    &J^{AB}_{21} = \,  -\int_0^1\,\frac{\p \hat{u}}{\p \lambda_x}N^AN^B_x  \,dx\,- \int_0^1\,2(3(\hat{e}-1)^2 - 1)\,\frac{\p \hat{e}}{\p \lambda}N^A_xN^B \,dx\\
    &J^{AB}_{22} = \,  -\int_0^1\,2(3(\hat{e}-1)^2 - 1)\,\frac{\p \hat{e}}{\p \mu_x}N^A_xN^B_x  \,dx\,- \int_0^1\,\frac{\p \hat{u}}{\p \mu}N^AN^B \,dx\\
     \end{aligned}
\end{equation*}

The Newton-Raphson scheme involves solving the following matrix equation for the $k^{th}$ correction to an initial guess $D^{B(0)}_j$
\begin{equation*}
    \begin{aligned}
     - R^A_i\left(\dee^{(k-1)}\right) & =  J^{AB}_{ij}\left(\dee^{(k-1)}\right)\, dD^B_j\\
         D^{B(k)}_j & = D^{B(k-1)}_j + dD^B_j,
    \end{aligned}
\end{equation*}
and this is continued until convergence of $|R(\dee)|$ to $0$, up to the convergence threshold $tol$.

An initial guess for the dual fields $D$ is required by the Newton Raphson (NR) algorithm. It is practical, and more feasible, to make an initial guess for a primal fields $(\hat{u},\hat{e})$ with physical meaning, and infer initial guesses for the dual fields consistent with it as a solution of the DtP mapping.  Choosing initial guesses $\hat{u}_{(i)}$ and $\hat{e}_{(i)}$
\begin{equation*}
\hat{u}_{(i)} = \bar{u}, \qquad \hat{e}_{(i)} = \bar{e},
\end{equation*}
the initial guess for the dual fields, 
\begin{equation*}
D_i^B = 0,
\end{equation*}
satisfies the DtP mapping equations and this is considered by us as a good initial guess to initiate the Newton-Raphson iterations. The algorithm capturing these ideas has been outlined in Table.~\ref{algo:euler_algorithm}.  

Some notational details related to our computational algorithm and results are as follows: 
\begin{itemize}
\item $tol$ represents the tolerance for the iterative solve of the Newton Raphson iterations; residuals
with $|R(\dee)| < tol$ are accepted as having approximately satisfied the corresponding
equations. We use the definition $|R(\dee)| := \max_{A,i} R^A_i(\dee)$, where $(A, i)$ ranges over all
dual nodal degrees of freedom on which a Dirichlet boundary conditions are not specified.
\item $u^{(t)}, e^{(t)}$ will represent exact `target' solutions for the context under discussion.
\item We report the accuracy of our approximations in the $L^1$ norm:
\begin{equation*}
    \|f\|_1 = \int_{0}^{1} |f(x)| \, dx.
\end{equation*}
\item In all our calculations we seek weak solutions of the E-L system so that jump conditions (like traction continuity) are automatically imposed.
\end{itemize}

\begin{table}
{\begin{algorithm}[H]
\caption*{\textbf{Algorithm}} 
\begin{algorithmic}
\State \textbf{Initialization}: Set $D^{B(0)}_j = 0$. Choose a value for $tol$. 
\\\hrulefill
\begin{enumerate}
    \item Newton's Method is used to evaluate the results. Set $k=1$
    \item[] \textbf{For} $k \geq 1$:
    \begin{enumerate}[i]
        \item \label{step-i} Evaluate $R^A_i\left(\dee^{(k-1)}\right)$ and $J^{AB}_{ij}\left(\dee^{(k-1)}\right)$.
        \item Solve $- R^A_i\left(\dee^{(k-1)}\right)  =  J^{AB}_{ij}\left(\dee^{(k-1)}\right)\, dD^B_j$, with appropriate boundary conditions. 
        \item Set $D^{B(k)}_j = D^{B(k-1)}_j + dD^B_j$.
        \item Evaluate $d^{(k)} = \max_{(A,i)}\,|R^A_i\left(\dee^{(k)}\right)|$. 
        \item[]\textbf{if} $d^{(k)}<tol$ \textbf{then} go to step \ref{stepout}
        \item[]\textbf{else do} $k=k+1$ and go to step \ref{step-i}   
    \end{enumerate}
    \item \label{stepout} Evaluate the values of $\hat{u}$ and $\hat{e}$ at the nodes using the $L^2$ projection. These nodal values establish the solution.
\end{enumerate}
\end{algorithmic}
\end{algorithm}}
\caption{Algorithm to solve non-convex elastostatics in 1-d.}
\label{algo:euler_algorithm}
\end{table}
\subsection{Results}\label{sec:results}
The algorithm of Sec.~\ref{subsec:alg_stat} is utilized to solve key problems demonstrating the theory. It is well-understood that due to the lack of lower semi-continuity of the primal energy functional \eqref{eqn:primal_func}, limits of energy minimizing sequences do not, in general, minimize the energy, i.e.~the minimization problem does not have any solution (for an elementary treatment, see \cite{bhattacharya2003microstructure}). The dual scheme, on the other hand, defines the notion of a variational dual solution to the E-L equations of the energy functional, as in Definition \ref{algo:euler_algorithm}, Sec.~\ref{sec:var_anal} . Such solutions need not have a connection to being minimizers of the primal energy functional, and we compute such results in the following examples.
Before proceeding to the problems that follow, we make the observation that our goal is to seek solutions to the system \eqref{eqn:primal_EL}. In particular, homogeneous strain profiles $u_x = \beta$  are solutions provided $\beta = \alpha = \alpha^*$.
\subsubsection{Stress-free solution}
We consider the primal energy functional 
\begin{equation*}\label{eqn:case_1_func}
 I = \int_0^1 \left( \left( u_x - 1\right)^2 - 1 \right)^2 + (u - x)^2 \,dx; \qquad u(0) = 0; \quad u(1) = 1\
\end{equation*}
and compute solutions to its Euler-Lagrange equation \eqref{eqn:primal_EL} with $\alpha = \alpha^* = 1$.
A stress-free equilibrium solution of the primal problem is given by
\[
u^{(t)} = x, \qquad e^{(t)} = 1,
\]
considered a target solution here, and we seek to compute it by the dual scheme, probing its `basin of attraction' by choosing initial guesses sufficiently away from it.
The boundary conditions chosen  (profile shown in Fig.~\ref{fig:case_1_s}) accommodate the stress-free configuration (which happens to be a limit of an energy minimizing sequence). As mentioned in Sec.~\ref{subsec:alg_stat}, we choose the initial guess to coincide with the base state; we demonstrate the solution of two dual problems, each with a different choice of base state as shown in Fig.~\ref{fig:case_1_s_sf_a} and Fig.~\ref{fig:case_1_s_sf_b}. The base state provided for an $\bar{e}$ profile is the derivative of the corresponding $\bar{u}$ profile. The $\bar{u}$ profiles satisfy the displacement b.c.s of the target solution. The dual scheme is able to recover the target solution shown in Fig.~\ref{fig:case_1_s} without difficulty, even for an initial guess amplitude that is $30\%$ away from the target. The $L^1$ displacement and strain errors relative to the target solution is summarized in Table.~\ref{tab:case_1} for the case corresponding to Fig.~\ref{fig:case_1_s_sf_a}. These values are of the same order for the other sub-case (Fig.~\ref{fig:case_1_s_sf_b}) as well.
\begin{figure}[H]
    \centering
    \begin{subfigure}[b]{0.4\textwidth}
        \centering
        \includegraphics[scale=0.40]{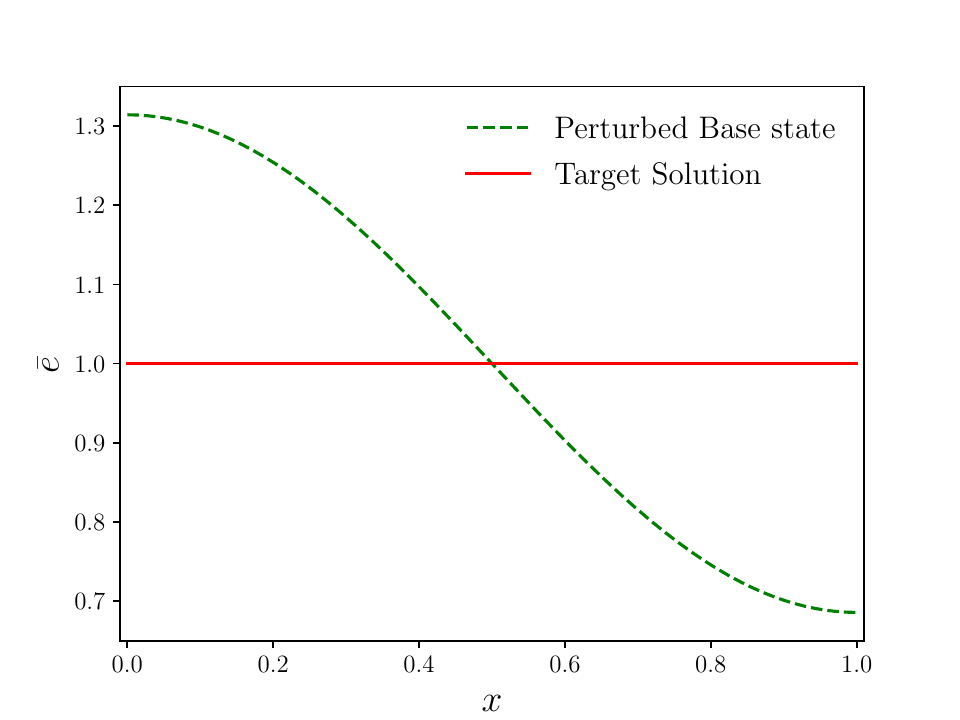}
        \caption{}
        \label{fig:case_1_s_sf_a}
    \end{subfigure}
    \begin{subfigure}[b]{0.4\textwidth}
        \centering
        \includegraphics[scale=0.40]{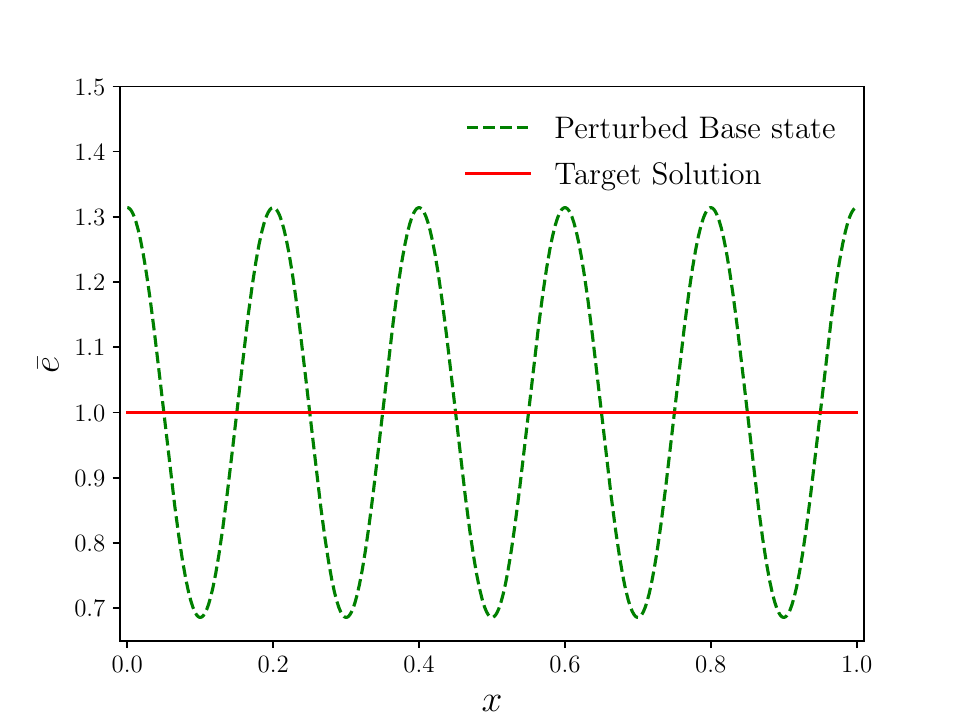}
        \caption{}
        \label{fig:case_1_s_sf_b}
    \end{subfigure}
    \begin{subfigure}[b]{0.5\textwidth}
        \centering
        \includegraphics[scale=0.50]{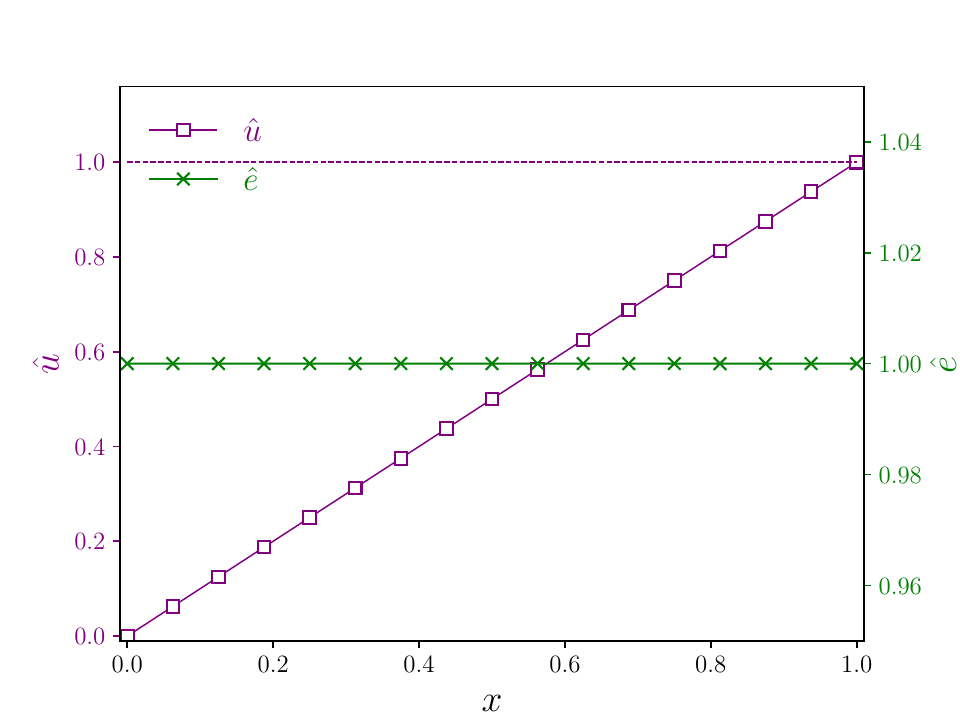}
        \caption{}
        \label{fig:case_1_s_sf_c}
    \end{subfigure}
    \caption{ (a) and (b) show the initial guesses, $\bar{e}$, for two cases and (c) shows the solution for the primal fields. Fig.~(c) contains both $\hat{u}$ and $\hat{e}$ fields with different y-axis labels. The color purple corresponds to information related to the displacement, while the color green corresponds to to that for strain.}
    \label{fig:case_1_s}
\end{figure}
\begin{table}[h]
\centering
\begin{tabular}{|c|c|c|} 
\hline
\textbf{Number of Elements} & \textbf{$\|\hat{u}-u^{(t)}\|_1$} & {$\|\hat{e}-e^{(t)}\|_1$} \\
\hline
100 & $1\times10^{-4}$ & $1\times10^{-5}$\\
1600 & $4\times10^{-7}$ & $2\times10^{-7}$\\
8000 & $1\times10^{-8}$ & $8\times10^{-9}$\\
\hline
\end{tabular}
\caption{Error reduction in $L^1$ norms of displacement and strain with mesh refinement. These values are for the case corresponding to Fig.~\ref{fig:case_1_s_sf_a}.}
\label{tab:case_1}
\end{table}
\subsubsection{Stressed solutions with spatially homogeneous and inhomogeneous strain configurations}
We now consider the primal functional 
\begin{equation}\label{eqn:case_2_func}
 I = \int_0^1 \left( \left( u_x - 1\right)^2 - 1 \right)^2 + (u - 0.5x)^2 \,dx; \qquad u(0) = 0; \quad u(1) = 0.5,
\end{equation}
with
$$
e^{(t)} = 0.5, \qquad u^{(t)} = 0.5x,
$$ 
as the exact target solution. We choose a base state near this solution as shown in Fig.~\ref{fig:case_2_sf_a} and start from an initial guess coincident with the base state. The base state provided for an $\bar{e}$ profile is the derivative of the corresponding $\bar{u}$ profile. The $\bar{u}$ profiles satisfy the displacement b.c.s of the target solution. The dual scheme is able to recover the target solution, presented in Fig.~\ref{fig:case_2_sf_b}. The $L^1$ displacement and strain errors relative to the target solution are summarized in Table.~\ref{tab:case_2}.
\begin{figure}[H]
    \centering
    \begin{subfigure}[b]{0.45\textwidth}
        \centering
        \includegraphics[scale=.45]{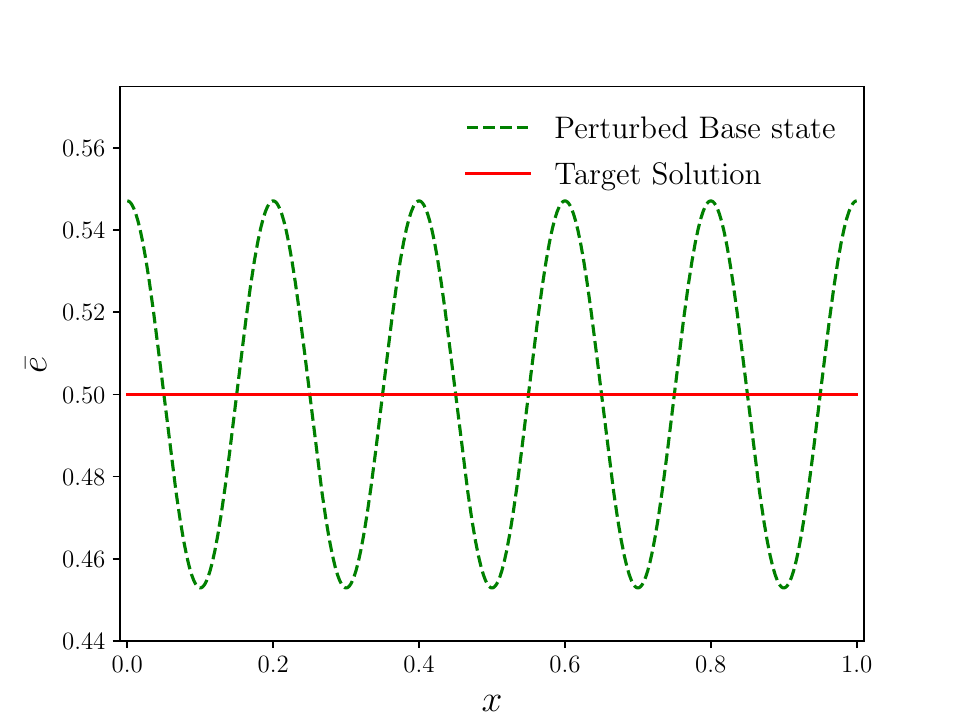}
        \caption{}
        \label{fig:case_2_sf_a}
    \end{subfigure}
    \begin{subfigure}[b]{0.45\textwidth}
        \centering
        \includegraphics[scale=.45]{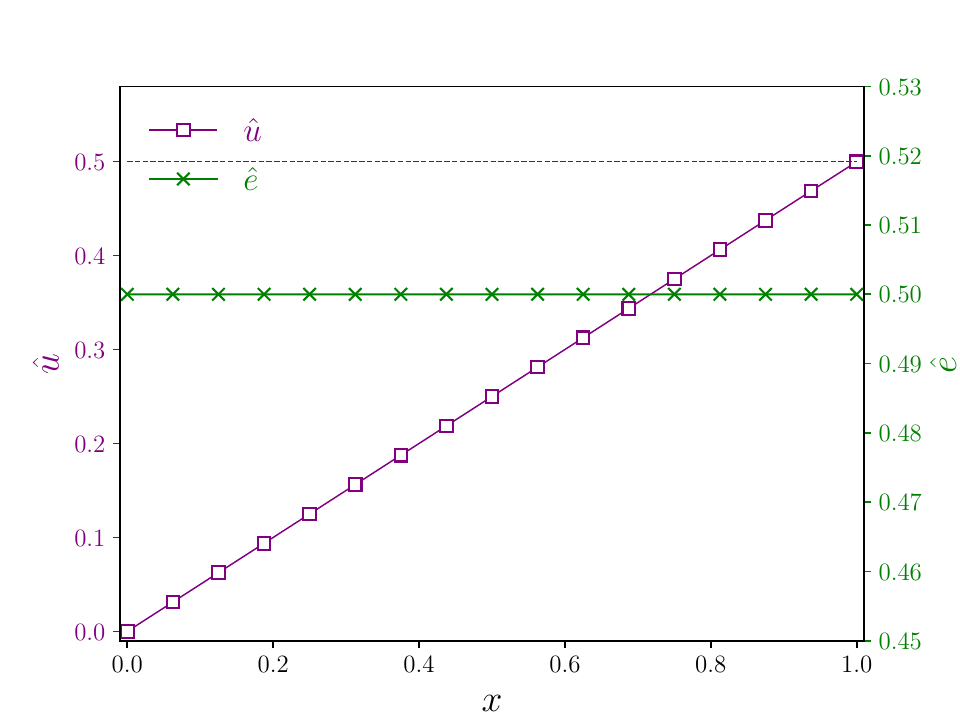}
        \caption{}
        \label{fig:case_2_sf_b}
    \end{subfigure}
    \caption{(a)  Base state $\bar{e}$ and (b) Solution for the primal fields.}
    \label{fig:case_2}
\end{figure}
\begin{table}[h]
\centering
\begin{tabular}{|c|c|c|} 
\hline
\textbf{Number of Elements} & \textbf{ $\|\hat{u}-u^{(t)}\|_1$} & {$\|\hat{e}-e^{(t)}\|_1$} \\
\hline
100 & $2\times10^{-4}$ & $1\times10^{-4}$\\
1600 & $1\times10^{-6}$ & $4\times10^{-7}$\\
8000 & $4\times10^{-8}$ & $1\times10^{-8}$\\
\hline
\end{tabular}
\caption{Error reduction in $L^1$ norms of displacement and strain with mesh refinement. These values are for the case corresponding to Fig.~\ref{fig:case_2}.}
\label{tab:case_2}
\end{table}
Next, we consider the energy functional
\begin{equation*}\label{eqn:case_1_func}
 I = \int_0^1 \left( \left( u_x - 1\right)^2 - 1 \right)^2 + (u - 0.5x)^2 \,dx; \qquad u(0) = 0; \quad u(1) = 1.
\end{equation*}
Clearly, a homogeneous strain solution is not possible in this case due to the mismatch $\alpha \neq \alpha^*$ and hence a solution is not easy to guess, but the expectation of the existence of a stressed, smooth solution of \eqref{eqn:primal_EL} seems eminently reasonable (even though, as usual, energy minimizers for this functional do not exist). However, a choice of a base state close to such an expected solution is not easy to guess. The equilibrium solution obtained in this case is shown in Fig.~\ref{fig:case_3_sf_b}. The difference between the base state chosen for this case and the solution can be observed in Fig.~\ref{fig:case_3_sf_a}.
 Convergence is obtained even with a $20\%$ difference in the average value of the initial guess for strain and the obtained solution ($e$). The problem is solved with varying mesh densities in the domain, starting with 8000 elements and comparing to solutions obtained with 4000, 2000, and 100 elements. The $L^1$ norm of the difference between solutions obtained with different mesh refinements is calculated and the results are presented in Table.~\ref{tab:case_3}, where $\hat{u}^{(n_e)}$ corresponds to the discrete displacement solution obtained using $n_e$ number of elements and similarly for the strain, $\hat{e}$. As the mesh is made finer the solutions appear to approach a fixed profile, this being interpreted as an indication of convergence. 

 We have considered the question of finding weak solutions to the equations of elastostatics \eqref{eqn:primal_EL} close to a guess for a limit of an energy minimizing sequence for this problem. The displacement profile for such a limit is expected to consist of a straight line of slope $\alpha = 0.5$ up to some point $x_0$ close to $x = 1$, followed by an  energy minimizing boundary layer profile satisfying the boundary conditions $u(x_0) = \alpha x_0$ and $u(0) = \alpha^* = 1$ (which can be obtained by solving the elastostatics equations with these b.c.s for fixed $x_0$, and then optimizing w.r.t $x_0$). However, since no considerations of force balance have entered, it is very unlikely that the traction jump condition is satisfied (the jump condition requires the discontinuity in stress, $\sigma$, in this 1-d case, to vanish at the displacement kink at $x_0$ joining the constant strain segment and boundary layer profile). Furthermore, the displacement profile for an element of such a sequence for $0 \leq x \leq x_0$ would consist of a typical `staircase' profile of slopes $0, 2$ along the backbone piece of slope $\alpha$, as shown in Fig.~\ref{fig:limit_seq}. However, due to the presence of the forcing term $(u - \alpha x)^2$ in the energy functional, constant strain segments do not satisfy the E-L equations \eqref{eqn:primal_EL} (while the jump condition would be satisfied at the interfaces of these `laminates'). Consequently, neither the elements of such an energy minimizing sequence, nor the limit can be expected to strictly satisfy the weak form of the E-L equations.

 On setting up the base state and initial guess as the limit displacement profile discussed above with $x_0 \neq \frac{2}{3}$, we have not been able to find solutions to the corresponding dual problem. On noting the fact that any continuous displacement profile comprising constant strain segments of slope $\{0,2\}$ does satisfy the traction jump condition, a staircase profile with these slopes riding on a slope $\half$ backbone up to $x_0 = \frac{2}{3}$ allows spanning the boundary layer with a constant strain of $2$. Thus, this is a profile for which traction jumps vanish, and with profiles close to this as a base state, the dual problem is able to find approximations to the weak form of the E-L equations. 
 
 In this manner, we have succeeded in finding many equilibrium solutions by the dual scheme close to a limit of a minimizing sequence, the limit in question being a continuous displacement profile with a single kink at $x_0 = \frac{2}{3}$, with constant slopes of $\half$ to the left and $2$ to the right of it. However, we have not succeeded in finding a dual (or a primal) weak solution to \eqref{eqn:primal_EL} using this limit profile as a base state and/or initial guess.
 \begin{figure}[H]
    \centering
        \includegraphics[scale=0.6]{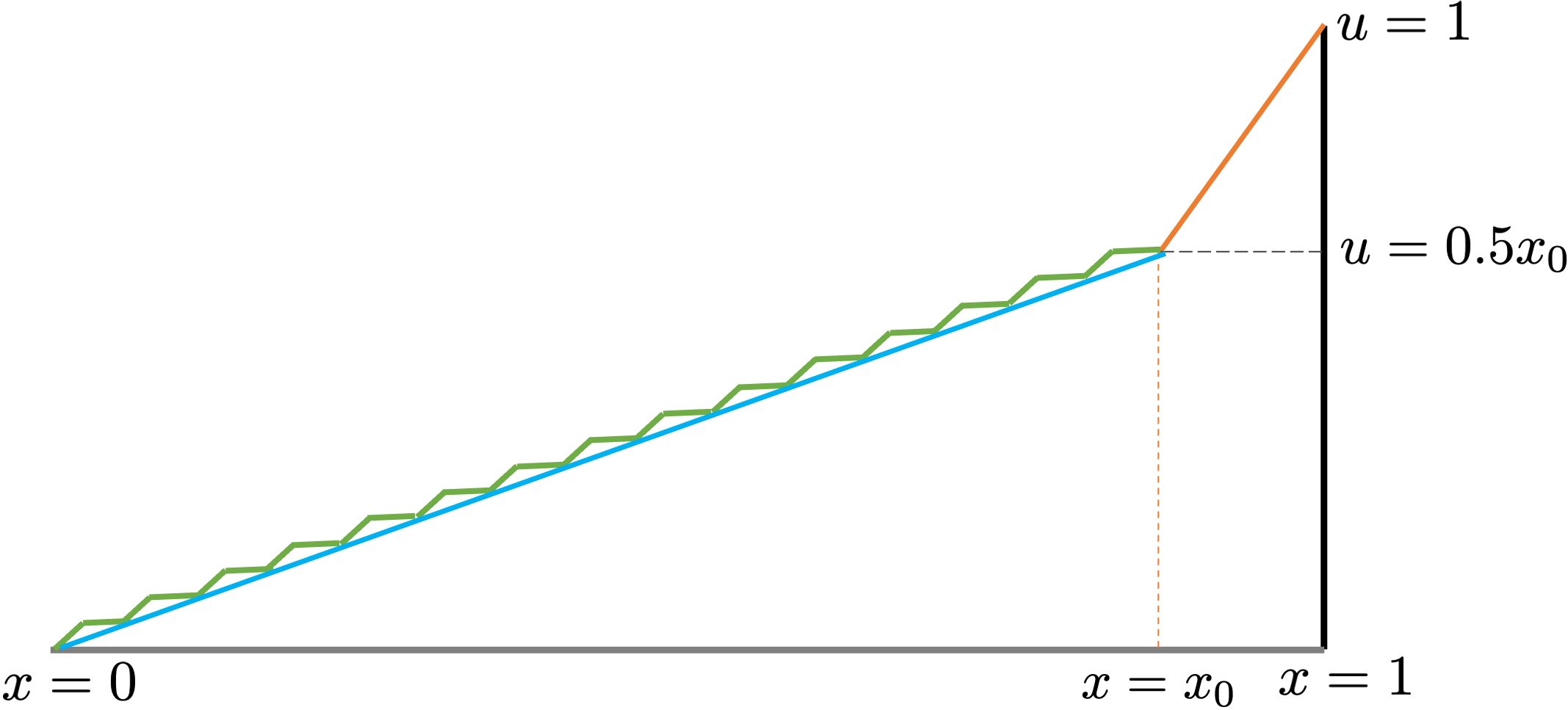}
        \caption{The displacement profile for the case $x_0 = \frac{2}{3}$. The staircase profile (green) rides on a line of slope $\frac{1}{2}$ (blue) until $x_0$, followed by an orange line of slope 2.}
        \label{fig:limit_seq}
\end{figure}
\begin{figure}[H]
    \centering
    \begin{subfigure}[b]{0.45\textwidth}
        \centering
        \includegraphics[scale=.45]{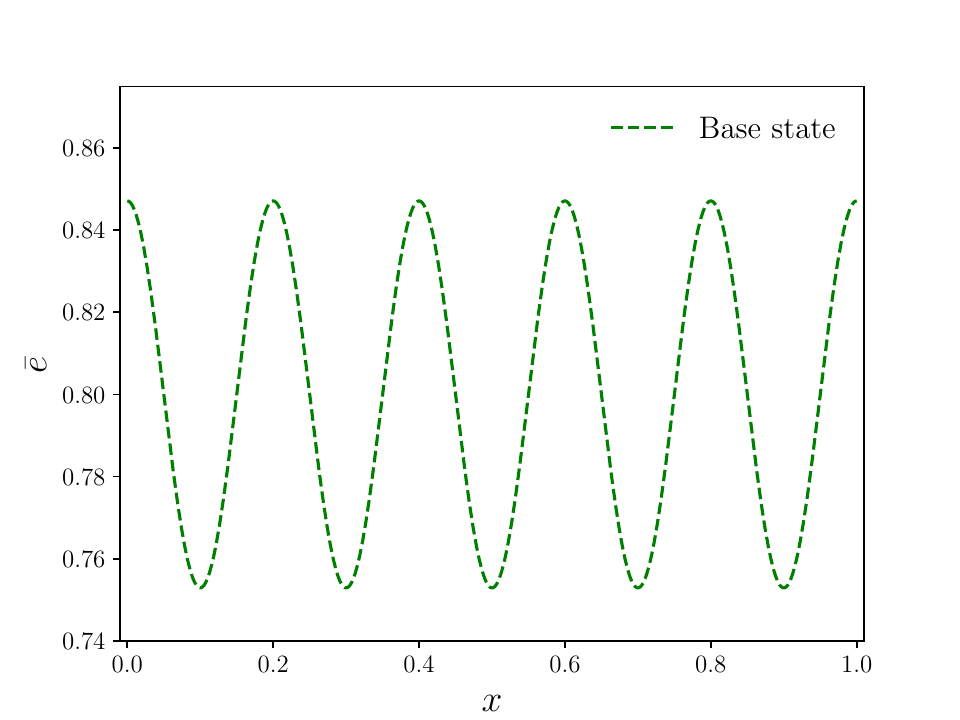}
        \caption{}
        \label{fig:case_3_sf_a}
    \end{subfigure}
    \begin{subfigure}[b]{0.45\textwidth}
        \centering
        \includegraphics[scale=.45]{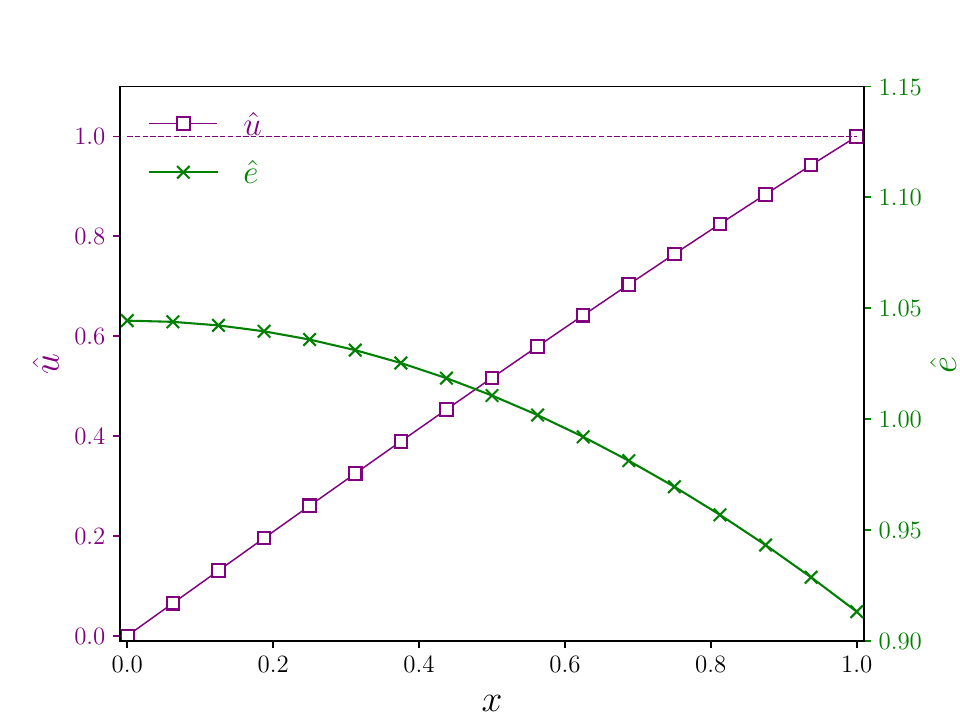}
        \caption{}
        \label{fig:case_3_sf_b}
    \end{subfigure}
    \begin{subfigure}[b]{0.45\textwidth}
        \centering%
        \includegraphics[scale=.45]{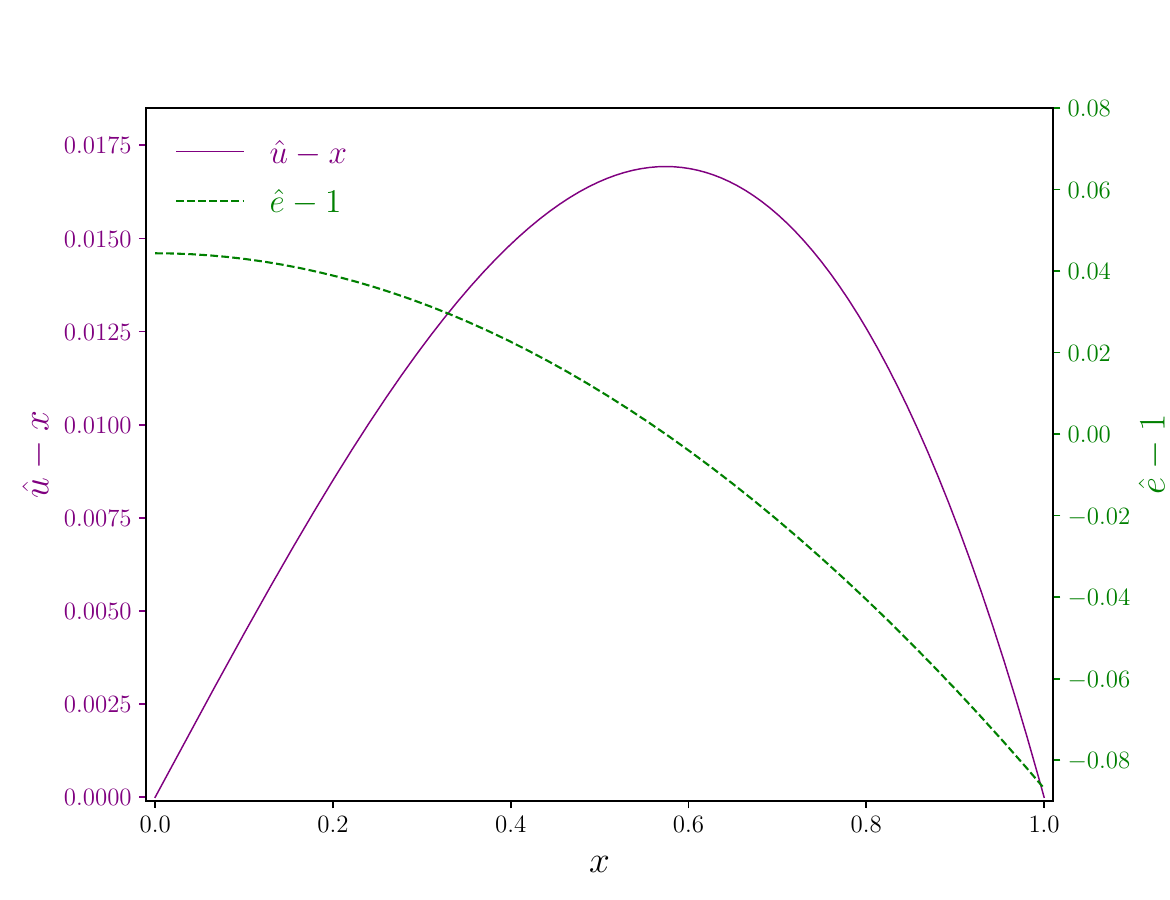}
        \caption{}
        \label{fig:case_3_sf_c}
    \end{subfigure}
    \caption{(a) Base state $\bar{e}$, (b)  Solution for the primal fields and (c)  Difference between the linear profile for $u$ and the constant profile for $e$ is shown for clarity.}
    \label{fig:case_3}
\end{figure}
 \begin{table}[H]
\centering
\begin{tabular}{|c|c|c|} 
\hline
\textbf{Number of Elements ($n_e$)} & \textbf{ $\|\hat{u}^{(n_e)}-\hat{u}^{(8000)}\|_1$} &{$\|\hat{e}^{(n_e)}-\hat{e}^{(8000)}\|_1$} \\
\hline
100 & $1\times10^{-5}$ & $2\times10^{-4}$\\
2000 & $4\times10^{-8}$ & $5\times10^{-7}$\\
4000 & $8\times10^{-9}$ & $1\times10^{-7}$\\
\hline
\end{tabular}
\caption{Mesh convergence for computations shown in Fig.~\ref{fig:case_3}.}
\label{tab:case_3}
\end{table}
\subsubsection{Tracking non-unique solutions through the choice of base states}
For the next two examples, we ignore the bulk loading term in the primal functional:
\begin{equation}\label{eqn:case_1_func}
 I = \int_0^1 \left( \left( u_x - 1\right)^2 - 1 \right)^2 \,dx;  \qquad u(0) = 0; \quad u(1) = 1\
\end{equation}
The goal is to show that we can recover distinct solutions in a seamless manner by using the choice of base states which acts to select particular solutions.
 As discussed in the preceding Sections \ref{sec:stab} and \ref{sec:deg_ell}, an important factor that determines the stability of a primal solution is the convexity/concavity of the functional at the obtained solution. We have plotted the energy density, ($\phi$), its derivative, the stress ($ \phi^\prime$), and its second derivative, the stiffness ($\phi^{\prime \prime}$) in Fig.~\ref{fig:expl_stab}. 
\begin{figure}[H]
    \centering
    \begin{subfigure}[b]{0.45\textwidth}
        \centering
        \includegraphics[scale=.45]{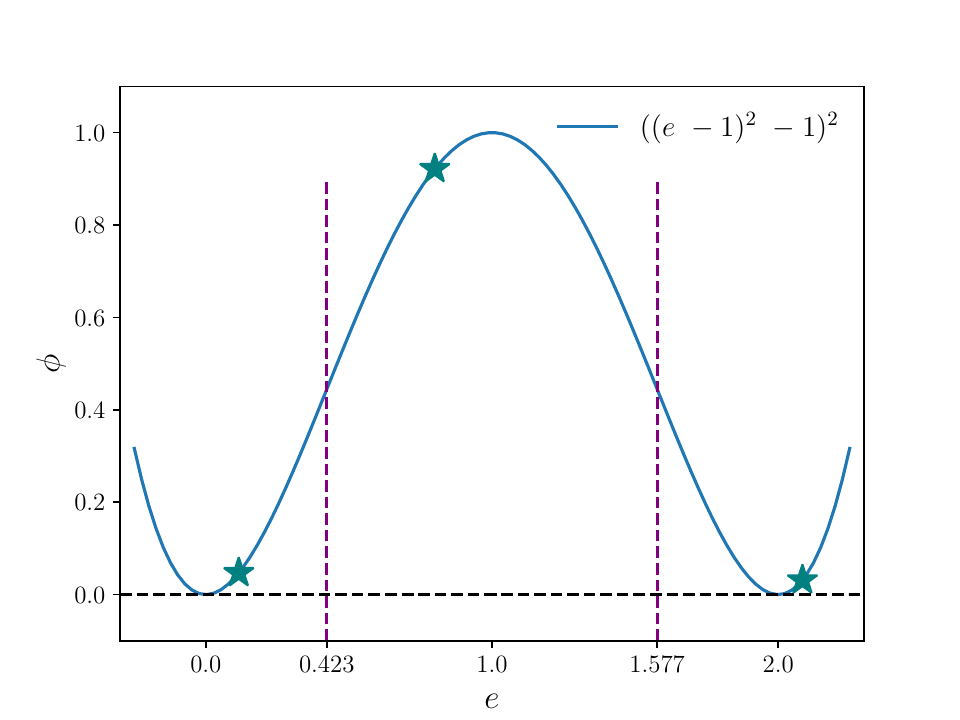}
        \caption{}
        \label{fig:expl_stab_ener}
    \end{subfigure}
    \begin{subfigure}[b]{0.45\textwidth}
        \centering
        \includegraphics[scale=.45]{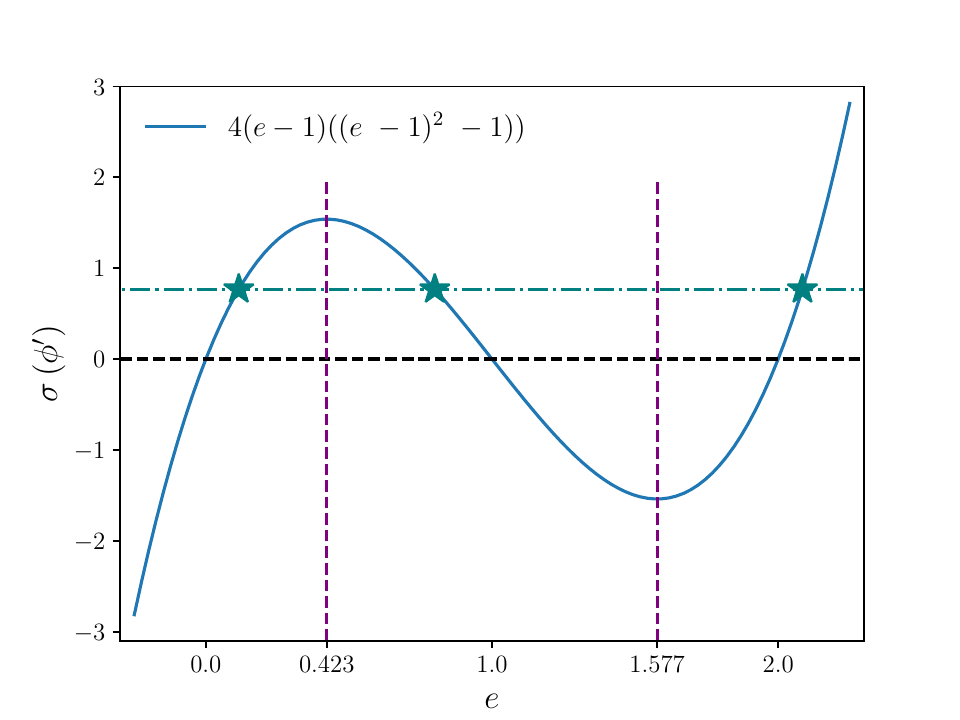}
        \caption{}
        \label{fig:expl_stab_stress}
    \end{subfigure}
    \begin{subfigure}[b]{0.45\textwidth}
        \centering%
        \includegraphics[scale=.45]{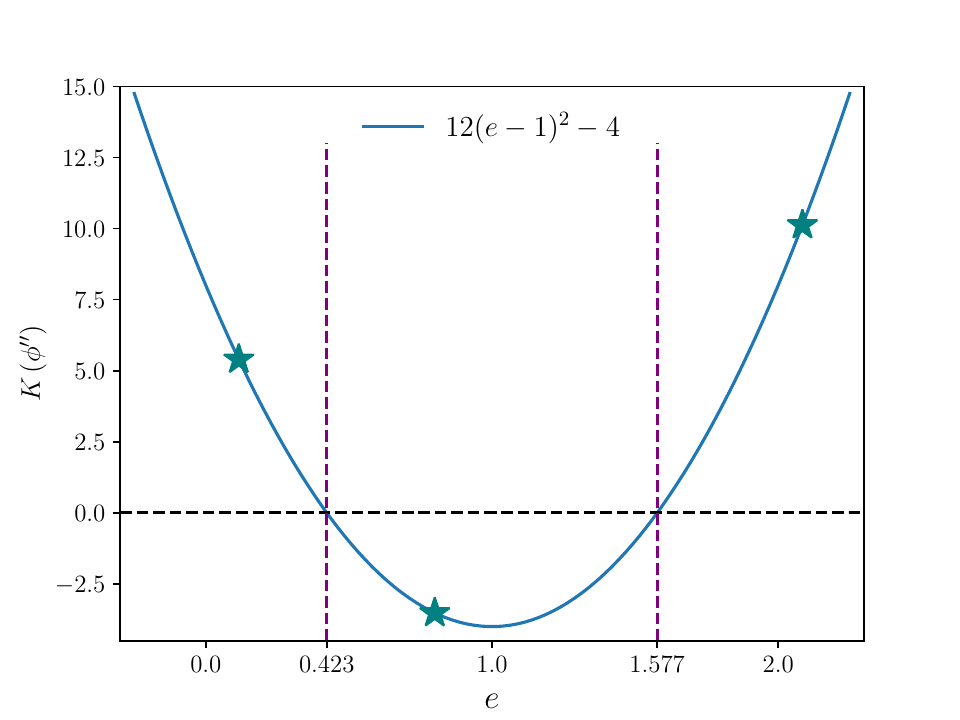}
        \caption{}
        \label{fig:expl_stab_stiff}
    \end{subfigure}
    \caption{Profile showing the (a)  energy density, (b)  its derivative (stress), and (c)  second derivative (stiffness). The region between the purple dashed lines, $0.423 \leq e \leq 1.577$, shows the strain range where stiffness is negative. The black dashed lines mark the zero on the y-axis. The $\star$s mark points that have the same stress, and are discussed in Sec.~\ref{sub_sec:case_5}.}
    \label{fig:expl_stab}%
\end{figure}
In this Section, we have obtained weak solutions to the primal E-L system \eqref{eqn:primal_EL} for which all points in the domain are either in the negative stiffness region or positive stiffness region using the dual scheme. Different solutions are obtained depending on the base state used. In one instance, the chosen base state and the obtained solution are shown in Fig.~\ref{fig:case_4_1}. Another solution for the given boundary condition is the `hat function' shown in Fig.~\ref{fig:case_4_2}. The base state provided in this example is of the form, $\bar{e} = 1 - a$ and $\bar{e} = 1 + a$. These two profiles are equally distributed in the different parts of the domain, as shown in Fig.~\ref{fig:case_4_1_sf_a} and Fig.~\ref{fig:case_4_2_sf_a}. Displacement boundary conditions are as in \eqref{eqn:case_1_func}. When $0\leq a \leq 0.3$, the former (uniform $e$ profile) solution is obtained; when  $0.9 \leq a \leq 5$, the latter (hat function profile) solution is obtained. Our scheme did not converge for $0.3 \leq a \leq 0.9$. Convergence with respect to mesh refinement for the two cases is shown in Tables.~\ref{tab:case_4_1} and \ref{tab:case_4_2}.
\begin{figure}[H]
    \centering
    \begin{subfigure}[b]{0.45\textwidth}
        \centering
        \includegraphics[scale=.45]{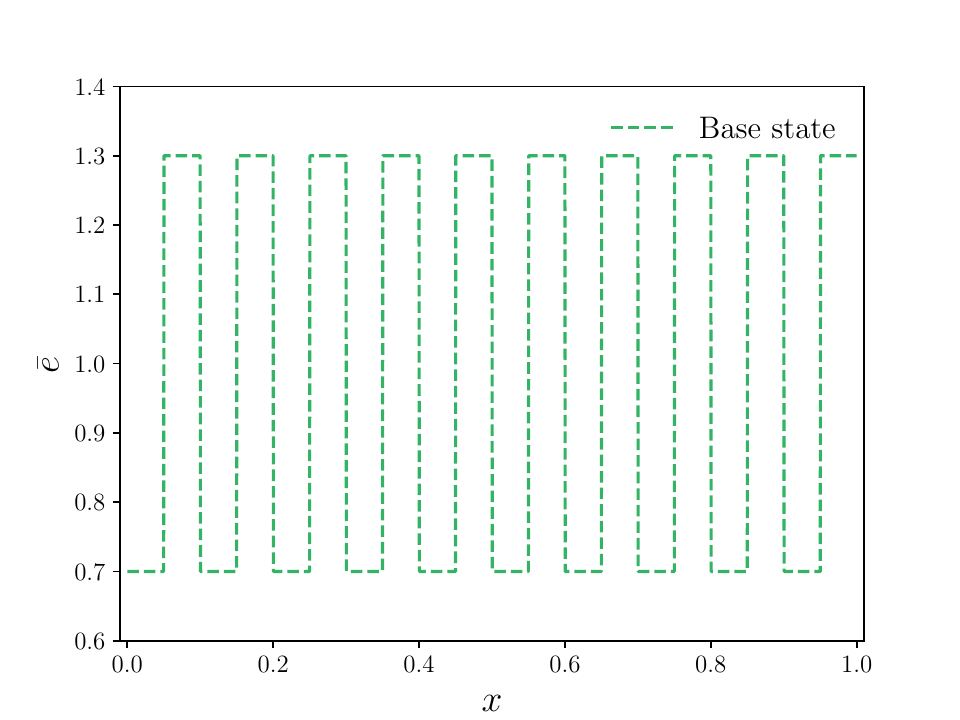}
        \caption{}
        \label{fig:case_4_1_sf_a}
    \end{subfigure}
    \begin{subfigure}[b]{0.45\textwidth}
        \centering
        \includegraphics[scale=.45]{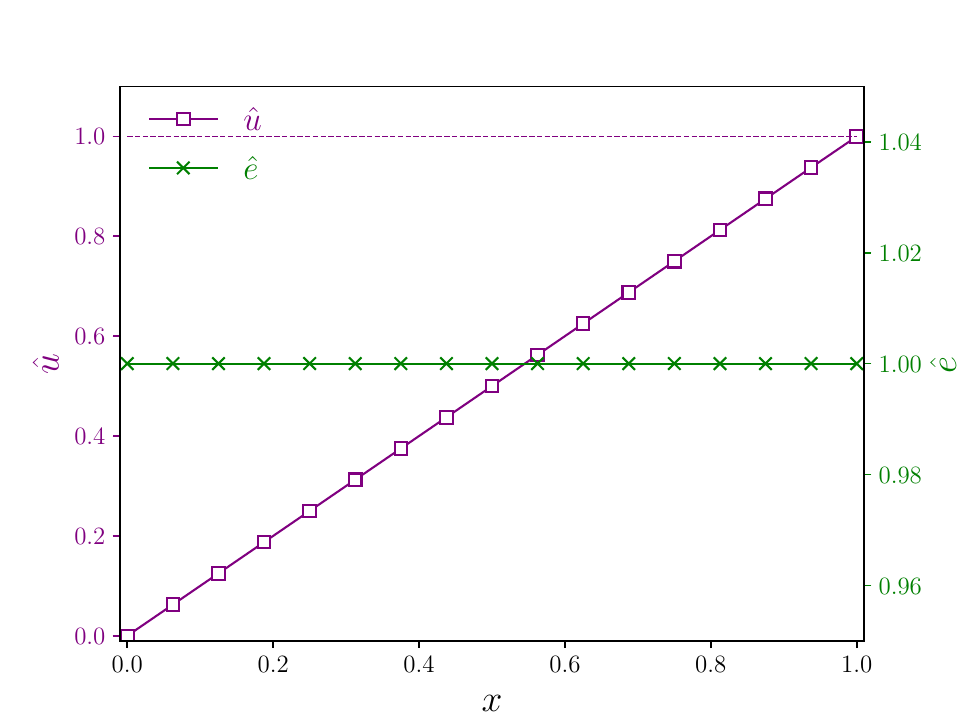}
        \caption{}
        \label{fig:case_4_1_sf_b}
    \end{subfigure}
    \caption{(a) Base state ($\bar{e}$) and (b) Solution for the primal fields.}
    \label{fig:case_4_1}%
\end{figure}
 \begin{table}[h]
\centering
\begin{tabular}{|c|c|c|} 
\hline
\textbf{Number of Elements ($n_e$)} & \textbf{ $\|\hat{u}^{(n_e)}-\hat{u}^{(8000)}\|_1$} &{$\|\hat{e}^{(n_e)}-\hat{e}^{(8000)}\|_1$} \\
\hline
100 & $3\times10^{-4}$ & $3\times10^{-8}$\\
2000 & $6\times10^{-7}$ & $9\times10^{-11}$\\
4000 & $1\times10^{-7}$ & $1\times10^{-11}$\\
\hline
\end{tabular}
\caption{Mesh convergence for computations shown in Fig.~\ref{fig:case_4_1}.}
\label{tab:case_4_1}
\end{table}
\begin{figure}[H]
    \centering
    \begin{subfigure}[b]{0.45\textwidth}
        \centering
        \includegraphics[scale=.45]{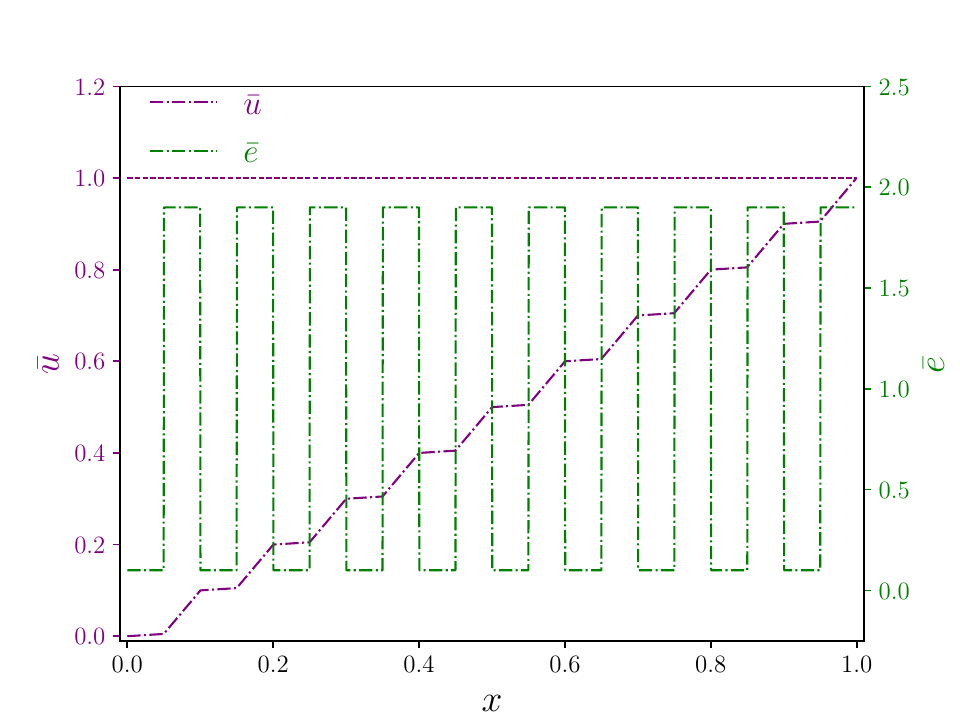}
        \caption{}
        \label{fig:case_4_2_sf_a}
    \end{subfigure}
    \begin{subfigure}[b]{0.45\textwidth}
        \centering
        \includegraphics[scale=.45]{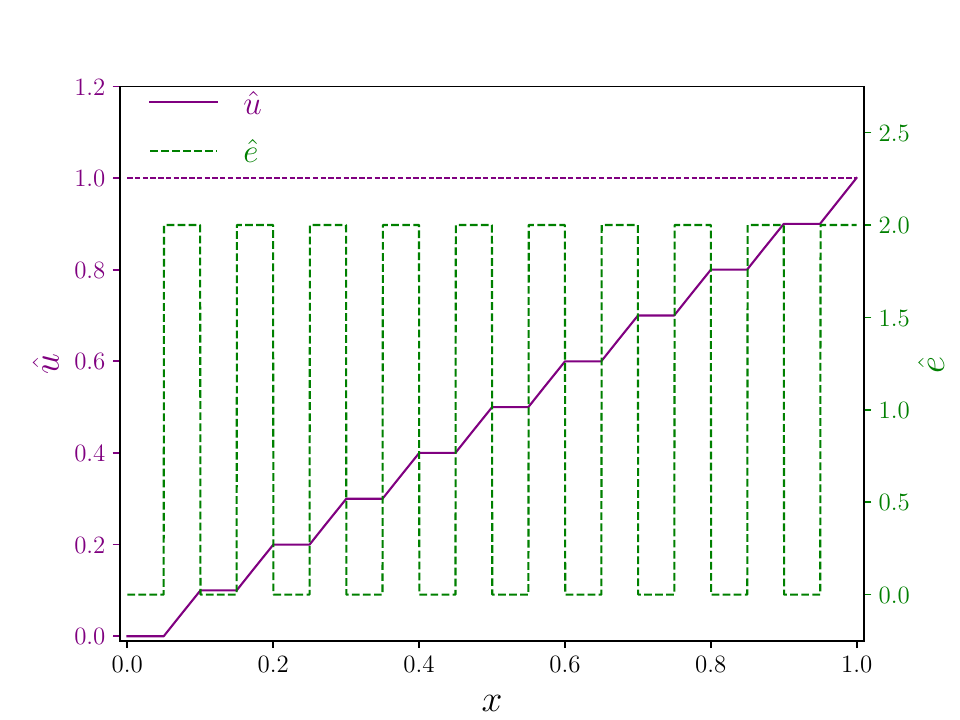}
        \caption{}
        \label{fig:case_4_2_sf_b}
    \end{subfigure}
    \caption{(a) Base state $\bar{u}$ and $\bar{e}$ and (b) Solution for the primal fields. This $\hat{u}$ field profile is referred to as hat function because of its shape.}
    \label{fig:case_4_2}%
\end{figure}
\begin{table}[h]
\centering
\begin{tabular}{|c|c|c|} 
\hline
\textbf{Number of Elements ($n_e$)} & \textbf{ $\|\hat{u}^{(n_e)}-\hat{u}^{(8000)}\|_1$} &{$\|\hat{e}^{(n_e)}-\hat{e}^{(8000)}\|_1$} \\
\hline
100 & $9\times10^{-5}$ & $4\times10^{-7}$\\
2000 & $2\times10^{-7}$ & $2\times10^{-8}$\\
4000 & $4\times10^{-8}$ & $1\times10^{-8}$\\
\hline
\end{tabular}
\caption{Mesh convergence for computations shown in Fig.~\ref{fig:case_4_2}.}
\label{tab:case_4_2}
\end{table}
\subsubsection{Stability of an equilibrium configuration with negative elastic stiffness grain boundaries without regularization: elastostatics and elastodynamics} \label{sub_sec:case_5}

We consider a 1-d idealization of an elastic body in the shape of a bar as shown in Fig.~\ref{fig:subfig_a}, comprising three grains separated by grain boundary phases. A high-angle grain boundary in an ordered medium is a strongly disordered region of the overall nominally crystalline state. We model the ordered grains as regions with positive elastic stiffness and the disordered grain boundary regions as ones with negative elastic stiffness. The material is described using the energy density profile shown in Fig.~\ref{fig:expl_stab}. As uniform stress over the domain is one of the solutions to the elastostatic problem, we look for a configuration with a constant stress field. Three strain ($e$) values (marked by $\star$ in Fig.~\ref{fig:expl_stab}) resulting in uniform stress over the domain are chosen in distinct parts of the domain, as shown in Fig.~\ref{fig:case_5_sf_b} and in Table~\ref{tab:targ_fig_elastostat}. Strain values in the stable region of the plots (positive stiffness), two of the three marked, are used to form the grains. The remaining value is used to form the unstable grain boundaries. The left boundary $(x = 0)$ is held fixed and the displacement b.c. at $x = 1$ is determined from integrating the assigned strain profile along $x$.
\begin{figure}[H]
    \centering
        \includegraphics[scale=.30]{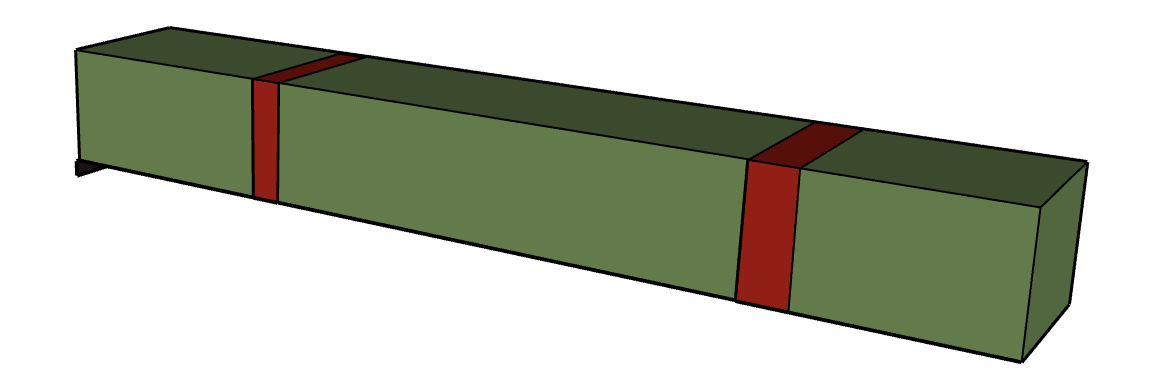}
    \caption{Schematic of a bar highlighting green grains separated by red grain boundaries. The left end is fixed and appropriate b.c.s are applied to the right end.} 
     \label{fig:subfig_a}
\end{figure}
\begin{table}[H]
    \centering
    \begin{tabular}{|c|c|c|}
\hline
Domain & $e^{(t)}$ & $u^{(t)}$ \\ \hline
$0 \leq x < 0.3225$ & $0.115$ & $0.115x$ \\ \hline
$0.3225 \leq x < 0.3325$ & $0.8$ & $0.8(x - 0.3225) + 0.037$ \\ \hline
$0.3325 \leq x < 0.8275$ & $2.085$ & $0.045 + 2.085(x - 0.3325)$ \\ \hline
$0.8275 \leq x < 0.8875$ & $0.8$ & $1.077 + 0.8(x - 0.8275)$ \\ \hline
$0.8875 \leq x \leq 1$ & $0.115$ & $1.125 + 0.115(x - 0.8875)$ \\ \hline
\end{tabular}
    \caption{Target solution for the case presented in Fig.~\ref{fig:case_5}.}
    \label{tab:targ_fig_elastostat}
\end{table}

{\bf Elastostatics:} The base state chosen is as shown in Fig.~\ref{fig:case_5}, for the $\bar{e}$ (and consistent $\bar{u}$) and, following standard protocol for solving the dual problem, we start from an initial guess coincident with it. The applied boundary condition is $\hat{u}(0) = 0$ and $\hat{u}(1) = 1.138$. The target solution is listed in Table~\ref{tab:targ_fig_elastostat}. The result, Fig.~\ref{fig:case_5}, shows that the solution obtained by the dual scheme has a significant `radius' of its basin of attraction, as gauged by the deviation of the initial guess (and base state) from the obtained solution. A convergence study w.r.t mesh refinement is also performed on the dual solution and presented in Table.~\ref{tab:case_5}.
\begin{figure}
    \centering
    \begin{subfigure}[b]{0.45\textwidth}
        \centering
        \includegraphics[scale=.45]{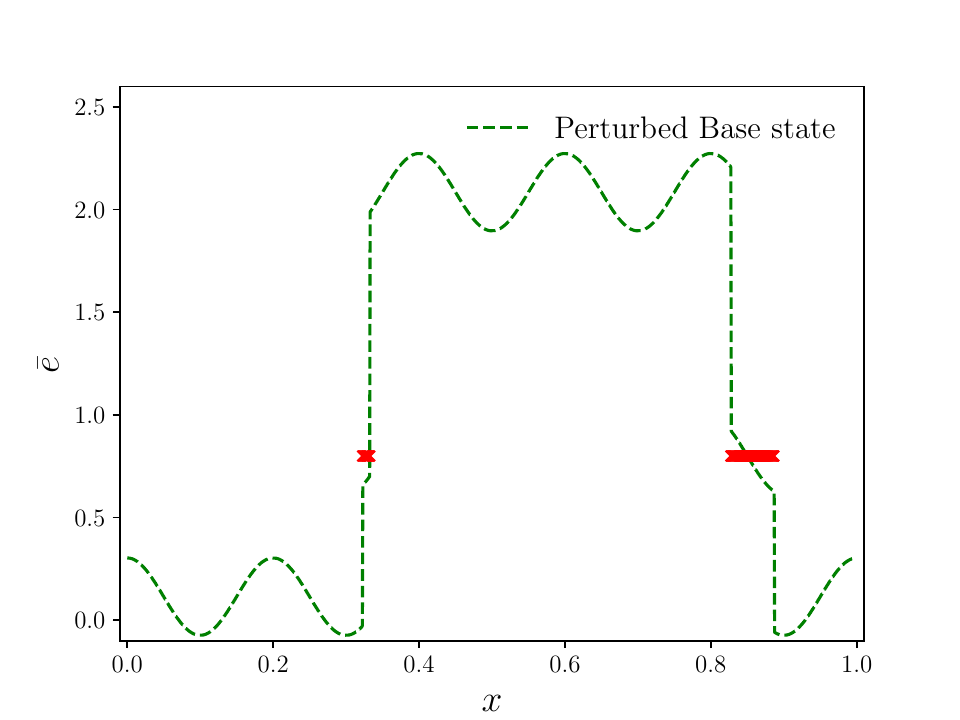}
        \caption{}
        \label{fig:case_5_sf_a}
    \end{subfigure}
    \begin{subfigure}[b]{0.45\textwidth}
        \centering
        \includegraphics[scale=.45]{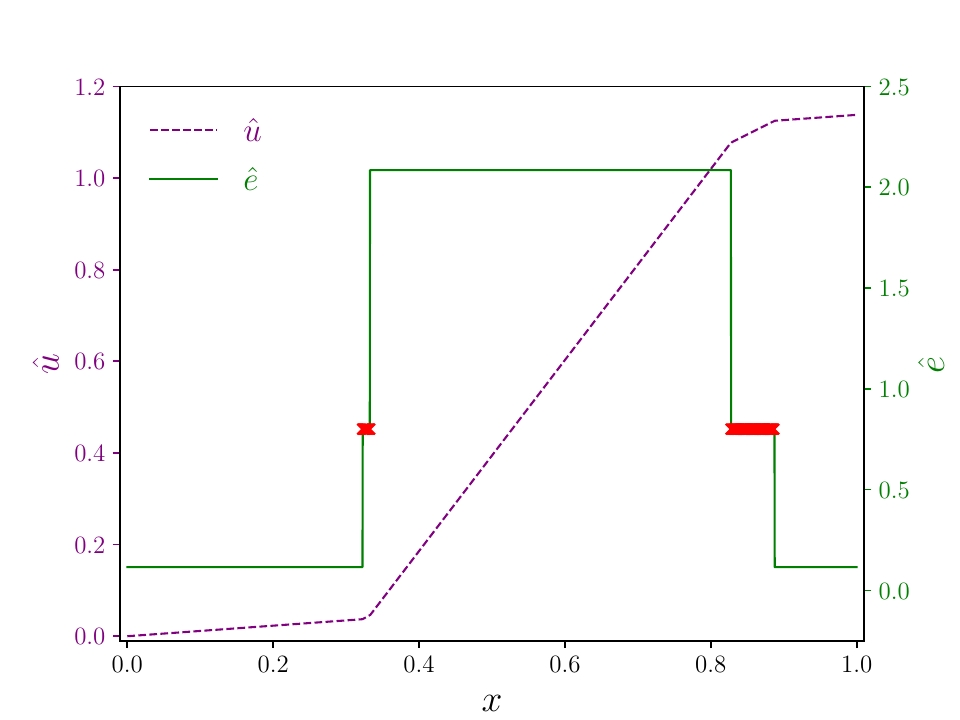}
        \caption{}
        \label{fig:case_5_sf_b}
    \end{subfigure}
    \caption{(a) Base states $\bar{e}$ field and (b) shows solution for the primal fields. Grain boundaries are marked with red $\star$.}
    \label{fig:case_5}%
\end{figure}
\begin{table}[h]
\centering
\begin{tabular}{|c|c|c|} 
\hline
\textbf{Number of Elements} & \textbf{ $\|\hat{u}-\hat{u}^{(t)}\|_1$} & {$\|\hat{e}-\hat{e}^{(t)}\|_1$} \\
\hline
100 & $2\times10^{-5}$ & $8\times10^{-7}$\\
1600 & $9\times10^{-8}$ & $6\times10^{-8}$\\
8000 & $1\times10^{-8}$ & $3\times10^{-9}$\\
\hline
\end{tabular}
\caption{Error Reduction in $L^1$ norms of displacement and strain with mesh refinement. These values are for the case corresponding to Fig.~\ref{fig:case_5}.}
\label{tab:case_5}
\end{table}

{\bf Elastodynamics:} We probe the dynamic stability of the equilibrium solution obtained in the elastostatic case above. As is well-understood, regions corresponding to  negative stiffness, as evaluated from the initial condition for the dynamic problem, are locally elliptic and hence the primal problem is ill-posed as an initial value problem and suffers from the `Hadamard instability' (or lack of continuous dependence w.r.t initial data). Solved by the dual scheme, however, this problem, which has a reasonable physical interpretation as mentioned above, is approachable as a stable one, i.e. thinking of the dual elastodynamic scheme as a mathematical model for a body with grain boundaries, represented here as containing regions with negative stiffness, an initial condition corresponding to an elastostatic solution to \eqref{eqn:primal_ED} remains stable under numerical evolution without further loading. In contrast, the primal formulation of elastodynamics suffers from severe numerical instability, resulting in (inevitably present) round-off errors from computer arithmetic `blowing-up' in finite (very short) time (as well-understood, and expected).

The theory discussed in Sec.~\ref{sec:elastodyn} is employed in order to solve the elastodynamic problem. In  1 space dimension compatibility is not a constraint, and we consider only two primal variables $e$ and $v$, corresponding to $F$ and $v_i$, respectively. The system of equations solved, therefore, becomes
\begin{subequations}
    \label{eqn:primal_ED}
    \begin{align}
        \sigma(e)_x - \rho_0 v_t & = 0 \qquad \mbox{in} \qquad \Omega \times (0,T) \\
        e_t - v_x & = 0 \qquad \mbox{in} \qquad \Omega \times (0,T) \\
        v(x,0) = v^{(0)}(x)  ; \qquad  e(x,0) = e^{(0)}(x)  &\qquad v(0,t) = v^{(l)}(t)  ; \qquad  v(1,t) = v^{(r)}(t).
     \end{align}
\end{subequations}
The base state chosen for $e$ is a shifted cubic and for $v$ a shifted quadratic: 
\begin{equation*}
H(\hat{v}, \hat{e}; x,t) = \frac{1}{2}\left( c_v(\hat{v}-\bar{v}(x,t))^2\right) +\frac{1}{2}\left(c_e(\hat{e} - \bar{e}(x,t))^2\right)
+\frac{1}{3}\left(c_e(\hat{e} - \bar{e}(x,t))^3\right).
\end{equation*}
The DtP mapping equations and the corresponding dual functional, using the dual (Lagrange multiplier) fields $L$ and $P$, are given by
\begin{equation}
    \label{eq:Dtp_ED}
    \begin{aligned}
        \hat{v} &= \frac{ \rho_0 L_t - P_x}{c_v} + \bar{v} \\
        c_{e}\frac{\hat{e}-\bar{e}}{|\hat{e}-\bar{e}|} &\left(\hat{e} - \bar{e}\right)^2 + c_e\left(\hat{e} - \bar{e}\right) - P_t + L_x\left(\frac{d\sigma}{de}\Big|_{\hat{e}}\right)  = 0
    \end{aligned}
\end{equation}
\begin{equation}
\label{eqn:dual_exact_dyn}
 \begin{aligned}
     S\, \left[L, P \right] \,&=\, \int_0^T\int_0^1\,\, dx \, dt \left[\left(-\rho_0 L_t \hat{v} + \sigma L_x\right)  +\left(P_x \hat{v} - \hat{e} P_t\right) + H\left(\hat{v},\hat{e}\right)\right]\\
     &+ \int_0^T \left(Pv^{(l)}\right)dt - \int_0^T \left(Pv^{(r)}\right)dt - \int_0^1 \left(Pe^{(0)}\right)dx - \int_0^1 \left(\rho_0 L v^{(0)}\right)dt. 
     \end{aligned}
\end{equation}
The numerical framework for solving the elastostatic problem discussed in Sec.~\ref{subsec:alg_stat} extends seamlessly to solve the dual elastodynamics problem. 
 The latter is a 2-d boundary value problem (in space-time), having $x$ and $t$ as the two dimensions. All primal boundary and initial conditions are imposed. At any boundary where boundary/initial conditions on a primal variable is not available, we apply homogeneous zero Dirichlet boundary condition on the corresponding dual variable. The residual for this case can again be obtained by taking the first variation of the dual functional and the second variation yields the Jacobian. The Newton-Raphson scheme is used to solve this system as well.

To analyze the stability of the elastostatic solution obtained above in this section, we provide that solution (see, Table~\ref{tab:targ_fig_elastostat}) as an initial condition. Zero velocity boundary conditions are imposed, i.e. $v^{(l)}(t) = v^{(r)}(t) = 0$. This is equivalent to evolving the system without any external influence. We evolve this case, shown in Fig.~\ref{fig:elast_d_gb}, using both a  primal scheme\footnote{The primal scheme refers to the conventional second-order form of \eqref{eqn:primal_ED} solved by a standard Galerkin discretization in space and an explicit centered difference scheme in time with very conservative time stepping.} and the dual scheme. It is observed in Fig.~\ref{fig:elast_d_gb_sf_b} that the maximum strain magnitude ($|e|$) becomes unbounded (`NaN' in computer output) within small times when evolved using the primal scheme. This unbounded behaviour is not observed when the problem is evolved using the dual scheme (Fig.~\ref{fig:elast_d_gb_sf_c}); it stays stable at the initial equilibrium configuration even after a significant duration of time.

\begin{figure}[H]
    \centering
    \begin{subfigure}[b]{0.51\textwidth}
        \centering
        \includegraphics[scale=.51]{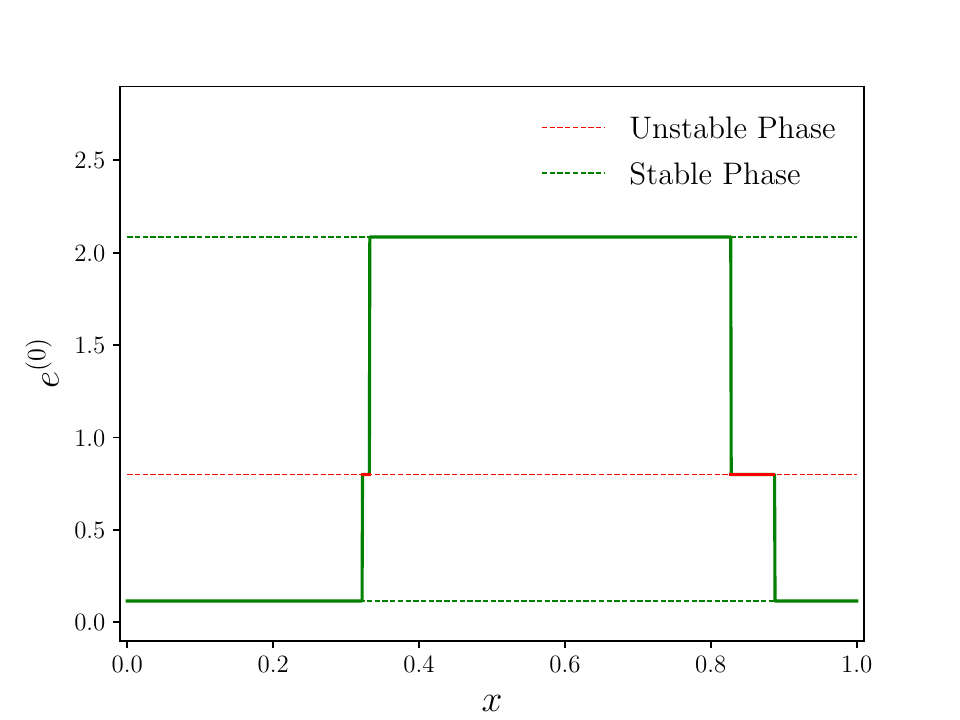}\\
        \caption{}
        \label{fig:elast_d_gb_sf_a}
    \end{subfigure}
    \begin{subfigure}[b]{0.49\textwidth}
        \centering
        \includegraphics[scale=.49]{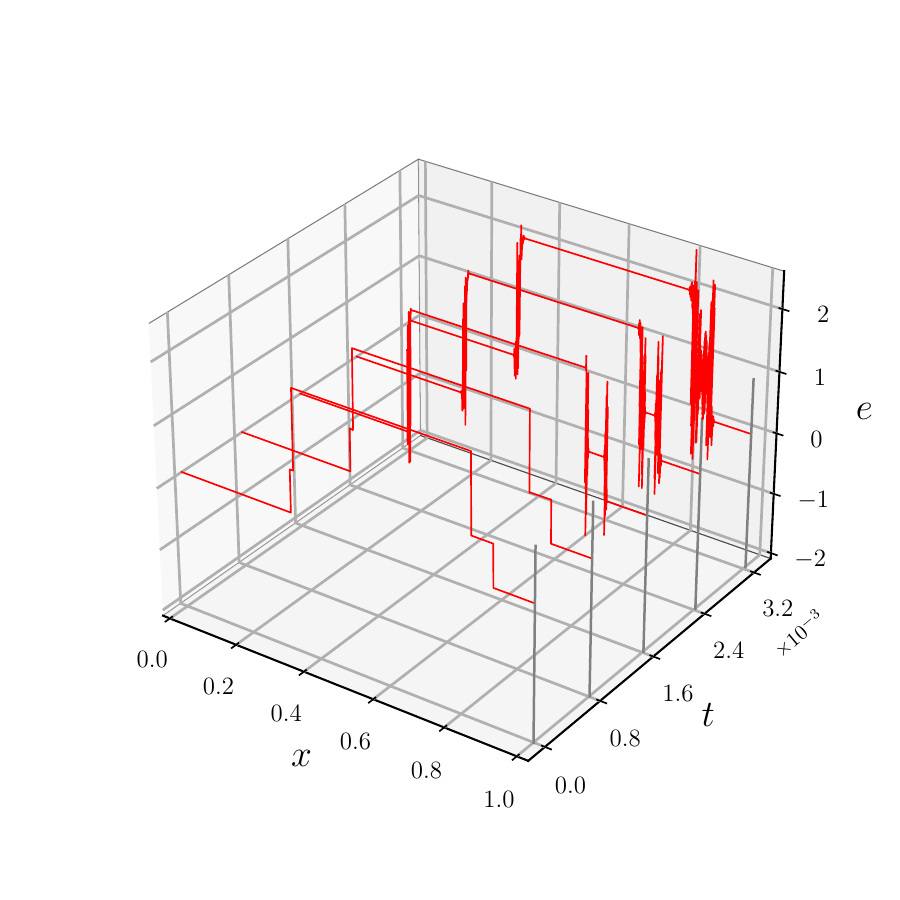}
        \caption{}
        \label{fig:elast_d_gb_sf_b}
    \end{subfigure}
    \begin{subfigure}[b]{0.49\textwidth}
        \centering%
        \includegraphics[scale=.49]{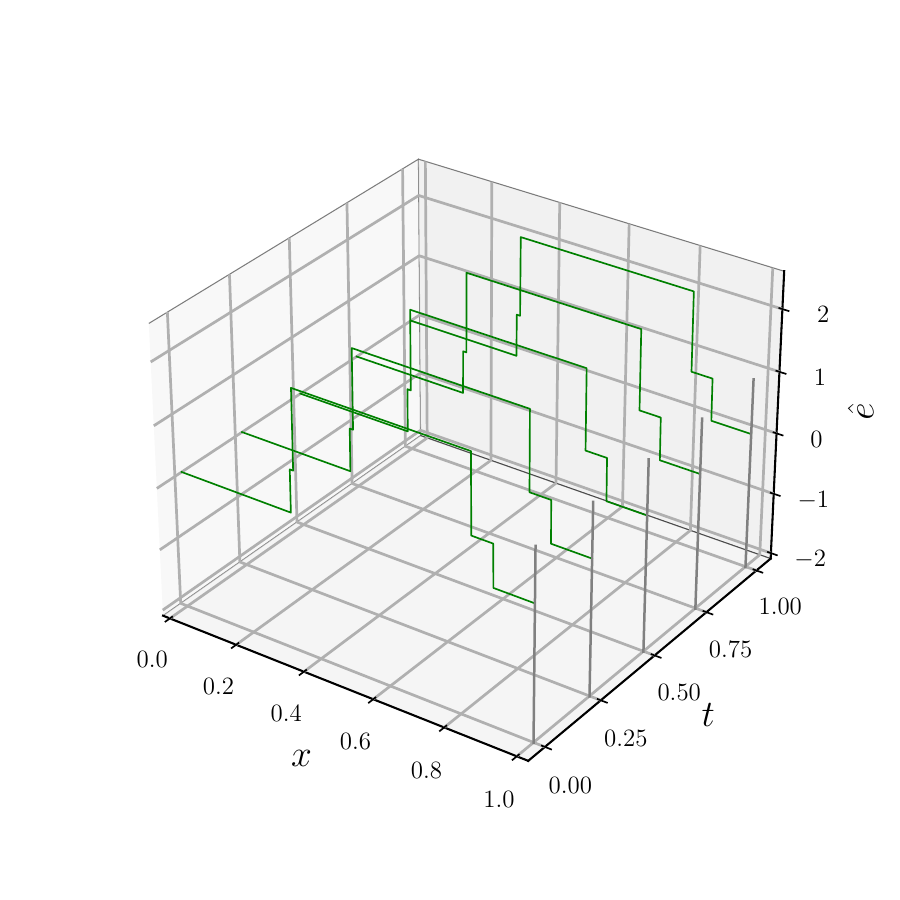}
        \caption{}
        \label{fig:elast_d_gb_sf_c}
    \end{subfigure}
    \caption{(a) Initial condition ($e$) for both primal and dual scheme. Over time, the solution to the primal problem diverges (red profile) as shown in (b), becoming unbounded (`NaN') after the next few time steps. Evolution solved using the dual scheme, shown in (c), remains stable (green profile). Figures (b) and (c) are plotted for different time intervals.}
    \label{fig:elast_d_gb}%
\end{figure}

We next probe the evolution of a small perturbation (i.e.~within the typical elastic limit) of this equilibrium solution in the dual formulation induced by a perturbation of the initial condition on strain and a corresponding perturbation in the displacement profile. The initial condition for the displacement provided w.r.t the equilibrium solution (Fig.~\ref{fig:case_5_sf_b}) is shown in Fig.~\ref{fig:case_6_sf_a}. The perturbations are provided in the stable phases of the domain. As expected, the initial profile breaks up into two waves of approximately half the amplitude of the initial displacement perturbation moving in opposite directions, following the trend of a D'Alembert-like solution from linear elastodynamics. The wave speed is observed to be higher in the region of higher stiffness.
\begin{figure}
    \centering
    \begin{subfigure}[b]{0.45\textwidth}
        \centering
        \includegraphics[scale=.40]{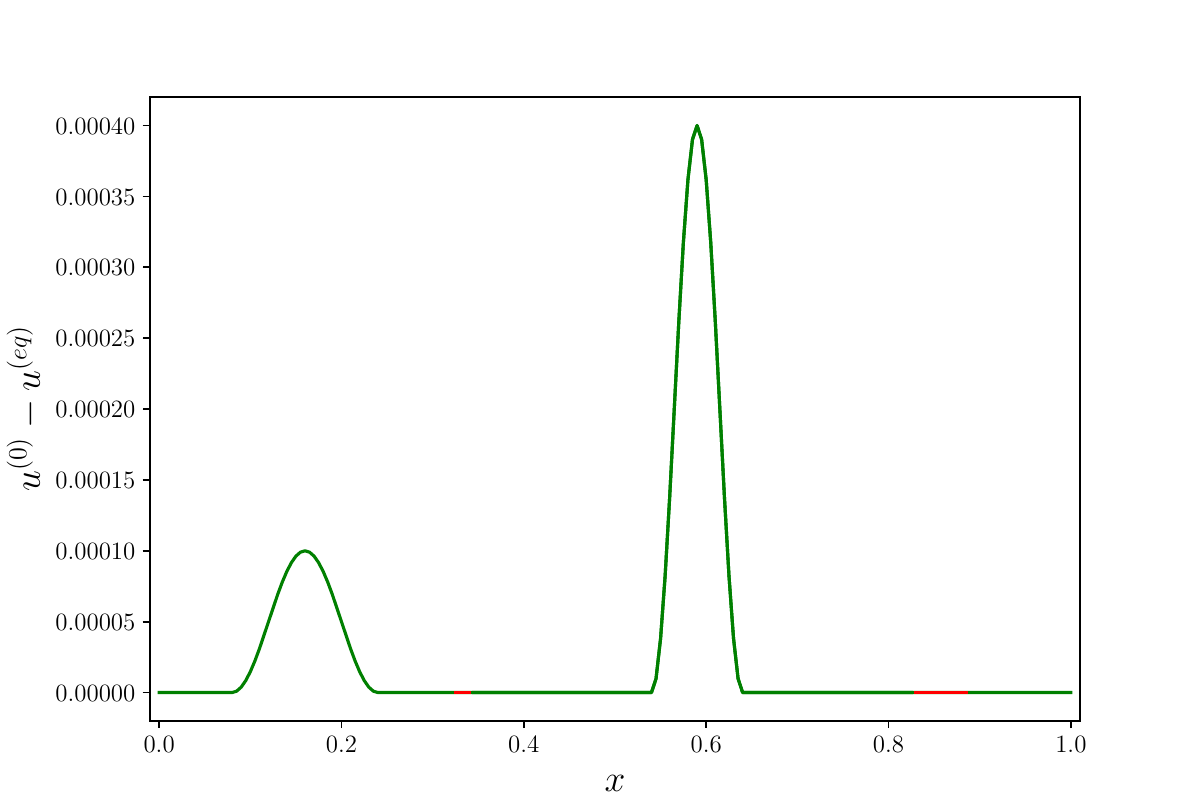}
        \caption{}
        \label{fig:case_6_sf_a}
    \end{subfigure}
    \begin{subfigure}[b]{0.45\textwidth}
        \centering
        \includegraphics[scale=.40]{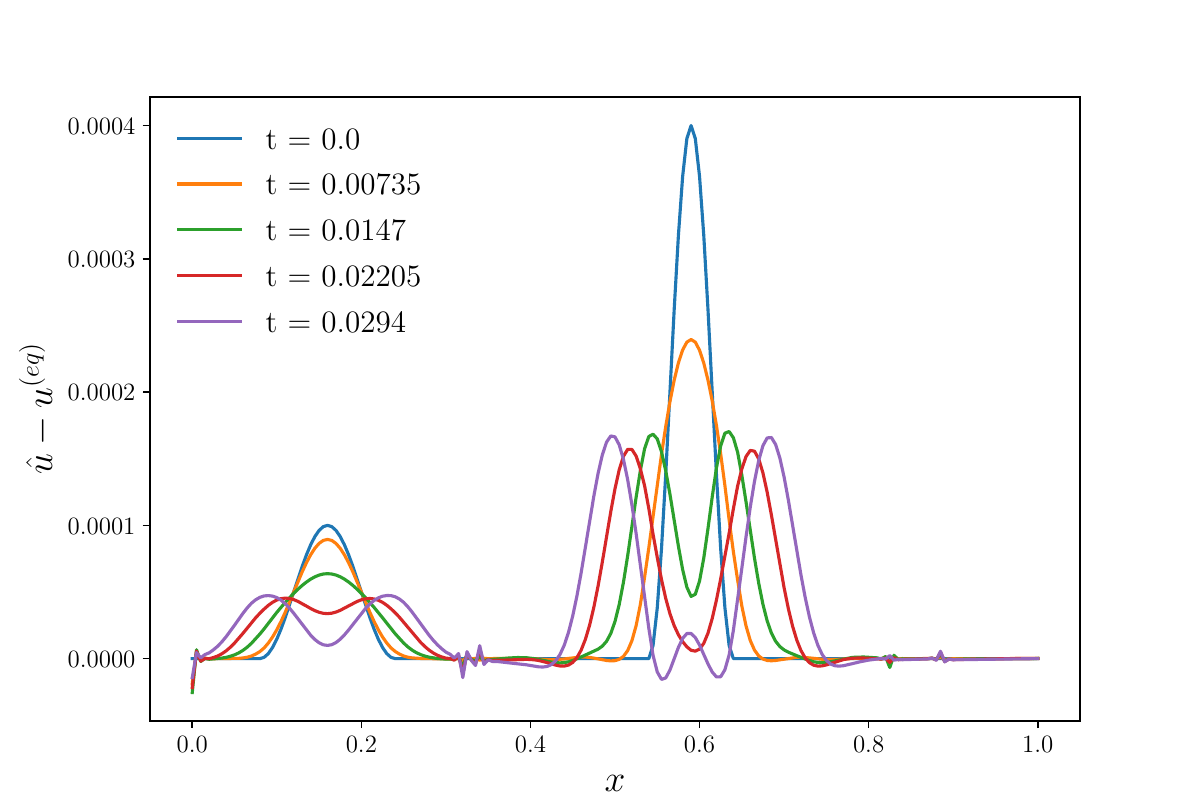}
        \caption{}
        \label{fig:case_6_sf_b}
    \end{subfigure}
    \caption{Evolution of displacement profiles relative to equilibrium: (a) Initial perturbations, (b) Displacement behavior over time. $u^{(0)}$, $u^{(eq)}$ represents initial and equilibrium profile of the displacement. $t$ corresponds to time stamp. Green and red color in (a) represents part of the domain in stable and unstable phase, respectively.}
    \label{fig:case_6}%
\end{figure}

\section{Conclusion}\label{sec:concl}
We have demonstrated a methodology for developing a (family of) dual variational principle(s) for nonlinear elastostatics and elastodynamics with the property that its Euler-Lagrange equations are the governing PDE of nonlinear elasticity with the primal fields represented in terms of a mapping of the dual fields and their space-time gradient, pointwise. The dual variational principles are convex by design, regardless of the properties of the primal problem. Existence of minimizers of such a dual variational principle is rigorously shown, and the notions of variational dual solutions and dual solution to the PDE of nonlinear elastostatics are defined in terms of the dual functional. Computed examples of illustrative cases corresponding to noncovex elastostatics and elastodynamics, including a stable dual calculation of an ill-posed, primal elastodynamic problem is also demonstrated.

The computed examples, especially in Sec.~\ref{sub_sec:case_5}, raise a philosophical question about the optimal, minimalist, mathematical model that should be adopted for modeling physically observed softening behavior in macroscopic mechanical response. This work seems to suggest that existence of dual solutions may not be a show-stopper, with the means of attaining uniqueness in the model transferred to the choice of special types of auxiliary functions $H$ employing physically meaningful base states which, nevertheless, need not be solutions to singularly-perturbed higher order PDE when physical justification for such is not available, but include experimentally observed input on physically relevant length scales. A crucial test in this regard would be to study the details of wave `propagation' through the non-hyperbolic regions in our model, and check how well such behavior squares with experiment.

\section*{Acknowledgment}
We thank Robert V. Kohn for helpful discussion and suggestions. This work was partially done while AA was on sabbatical leave in 2023 at a) the Max Planck Institute for Mathematics in the Sciences in Leipzig, and b) the Hausdorff Institute for Mathematics at the University of Bonn funded
by the Deutsche Forschungsgemeinschaft (DFG, German Research Foundation) under Germany’s
Excellence Strategy – EXC-2047/1 – 390685813, as part of the Trimester Program on Mathematics
for Complex Materials. The support and hospitality of both institutions are acknowledged. The collaboration between JG and AA on this paper was initiated at the trimester program in Bonn. It is also a pleasure to acknowledge discussions at these locations on the subject matter of this paper with John M.~Ball, Lukas Koch, and Angkana R\"uland. AA was supported by a Simons Pivot Fellowship grant \# 983171. SS was supported by the NSF OIA-DMR grant \# 2021019 and the Center for Extreme Events in Structurally Evolving Materials, ARL contract \# W911NF2320073. 

\bibliographystyle{alphaurl}\bibliography{main.bib}
\end{document}